\newtheorem{theorem}{Theorem}[section]
\newtheorem{definition}[theorem]{Definition}
\newtheorem{lemma}[theorem]{Lemma}
\newtheorem{remark}[theorem]{Remark}
\newtheorem{corollary}[theorem]{Corollary}
\newcommand{\abs}[1]{\lvert#1\rvert}
\newcommand{\norm}[1]{\lVert#1\rVert}
\newcommand{\red}[1]{\textcolor{red}{#1}}
\newcommand{\blue}[1]{\textcolor{blue}{#1}}
\newcommand{\D}{\mathcal{D}}
\newcommand{\G}{\mathcal{G}}
\newcommand{\K}{\mathcal{K}}
\newcommand{\h}{\mathcal{H}}
\tikzstyle{nodo}=[circle,draw,fill,inner sep=0pt,minimum size=%
\numberwithin{equation}{section}
\title[Nonlinear Dirac equations]{Normalized solutions of nonlinear Dirac equations on noncompact metric graphs with localized nonlinearities}
\author[Z. He]{Zhentao He}
\address[Z. He]{\newline\indent
	School of Mathematics
	\newline\indent
	East China University of Science and Technology
	\newline\indent
	Shanghai 200237, PR China }
\email{\href{mailto:hezhentao2001@outlook.com}{hezhentao2001@outlook.com}}
\author[C. Ji]{Chao Ji}
\address[C. Ji]{\newline\indent
	School of Mathematics
	\newline\indent
	East China University of Science and Technology
	\newline\indent
	Shanghai 200237, PR China }
\email{\href{mailto:jichao@ecust.edu.cn}{jichao@ecust.edu.cn}}
\subjclass[]{}
\date{\today}
\keywords{}
\begin{document}

\begin{abstract}
In this paper, we study the following nonlinear Dirac equations (NLDE) on noncompact metric graph $\G$ with localized nonlinearities
\begin{equation}
    \D u - \omega u= a\chi_\K\abs{u}^{p-2}u,
\end{equation}
where $\D$ is the Dirac operator on $\G$, $u: \G \to \mathbb{C}^2$, $\omega\in \mathbb{R}$, $a > 0$, $\chi_\K$ is the characteristic function of the compact core $\K$, and $p>2$. First, for $2<p<4$,  we prove the existence of normalized solutions to (NLDE) using a perturbation argument. Then, for $p \geq 4$, we establish the assumption under which normalized solutions to (NLDE) exist. Finally, we extend these results to the case $a<0$ and, for all $p>2$, prove the existence of normalized solutions to (NLDE) when $\lambda = -mc^2$ is an eigenvalue of the operator $\D$. In the Appendix, we study the influence of the parameters $m, c > 0$ on the existence of normalized solutions to (NLDE). To the best of our knowledge, this is the first study to  investigate the normalized solutions to (NLDE) on metric graphs.
\end{abstract}
\keywords{Normalized solutions, Nonlinear Dirac equations, Metric graphs, Variational methods.}
\maketitle


\section{Introduction}

Throughout the paper, we consider a connected noncompact metric graph $\mathcal{G} = (\mathrm{V}, \mathrm{E})$, where $\mathrm{E}$ is the set
of edges and $\mathrm{V}$ is the set of vertices. We assume that $\mathcal{G}$ has a finite number of edges and the compact
core $\K$ is non-empty. Unbounded edges are identified with (copies of) $\mathbb{R}^+ =  [0, +\infty )$ and are called half-lines, while bounded edges are identified with closed and bounded intervals $I_e = [0, \ell_e]$, $\ell_e > 0$.  If $\G$ is a metric graph with a finite number of vertices, its compact core $\K$ is defined as the metric subgraph of $\G$ consisting of all the bounded edges of $\G$. A connected metric graph has the natural structure of a locally compact metric space, the metric being given by the shortest distance measured along
the edges of the graph. Moreover, a connected metric graph with a finite number of vertices is compact if and only if it contains no half-line.

 Quantum graphs (metric graphs equipped with differential operators) arise naturally as simplified models in mathematics, physics, chemistry, and engineering when one considers propagation of waves of various nature through a quasi-one-dimensional (e.g., meso- or nano-scale) system that looks like a thin neighborhood of a graph. For further details on Quantum graphs, one may refer to \cite{Be}.
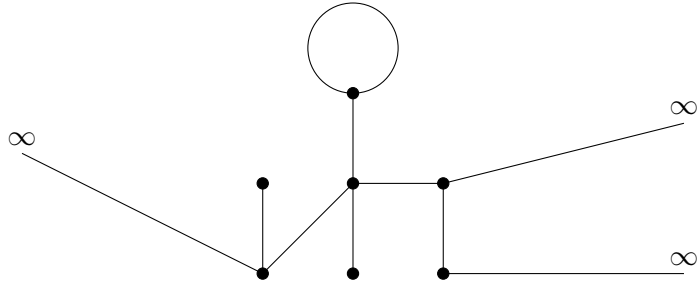
\begin{figure}[H]
	\begin{tikzpicture}[xscale= 0.4,yscale=0.4]
		\draw (-14,3)--(-11,6) ; \draw (0,8)--(-8,6);
		\draw (-11,6)--(-8,6); \draw (-22,7)--(-14,3);
				\draw (0,3)--(-8,3);
		\draw (-11,9)--(-11,6);
		\draw (-14,3)--(-14,6);
		\draw (-11,3)--(-11,6);
		\draw (-8,3)--(-8,6);
		\node at (-14,3) [nodo] {} ; \node at (-14,6) [nodo] {};
		\node at (-11,3) [nodo] {}; \node at (-11,6) [nodo] {} ;
		\node at (-8,3) [nodo] {}; \node at (-8,6) [nodo] {};
		\node at (-11,9) [nodo] {};
  \draw (-22,7.5)  node{$\infty$};
  	\draw (0,8.5)  node{$\infty$};
     	\draw (0,3.5)  node{$\infty$};
   \draw (-11,10.5) circle (1.5);
		\end{tikzpicture}
	\caption{A noncompact graph $\G$ with finitely number of edges and a non-empty compact core.}
\end{figure}

Currently, a considerable attention has been devoted to the following nonlinear Schr\"odinger equations (NLSE) on metric graphs
\begin{equation}\label{eqsinvolvet}
     \imath\partial_t \psi = -\psi'' -\abs{\psi}^{p-2}\psi,
\end{equation}
where $p > 2$  and $\partial_t=\frac{\partial}{\partial t}$.
The physical motivations for studying (NLSE) on metric graphs
are related to the qualitative behavior of Bose-Einstein condensates in ramified traps and nonlinear optics in Kerr media. The stationary solutions to \eqref{eqsinvolvet} , that is, $\psi(t, x) = e^{-\imath \lambda t}u(x)$, with $\lambda \in \mathbb{R}$, that solve
\begin{equation}
\label{eqs}
    -u'' - \lambda u= \abs{u}^{p-2}u.
\end{equation}
The initial study of equation \eqref{eqs} focused on the infinite $N$-star graphs, see \cite{Ad1,Ad2}. For results regarding the existence and multiplicity of normalized solutions to equation \eqref{eqs} on compact metric graphs, we refer the readers to \cite{Jea,Do,Ch}. In this paper, we focus on the results on noncompact metric graphs. In the $H^1$-subcritical case (i.e., $2<p < 6$) and $H^1$-critical case (i.e., $p=6$), the existence, multiplicity and uniqueness of normalized solutions to equation \eqref{eqs} on noncompact metric graphs have been extensively studied, we refer the readers to \cite{Do0,Do1,Ad3,Ad4,Ad5,Ad6,Pi}. Specifically, in \cite{Ad3}, Adami, Serra and Tilli established a topological assumption on noncompact metric graphs that guarantees the non-existence of normalized ground states for equations \eqref{eqs} with $2<p<6$ and proved the existence of normalized ground states on some specific noncompact metric graphs. In \cite{Do1}, Dovetta, Serra and Tilli investigated the uniqueness and non-uniqueness of normalized ground states to equation \eqref{eqs} with $2<p\leq6$, they obtained the uniqueness on two specific noncompact metric graphs and constructed a metric graph that admits at least two normalized ground states of same mass having different Lagrange multipliers. Moreover, for the $H^1$-supercritical case (i.e., $p>6$), the existence of normalized solutions to equation \eqref{eqs} has started to be considered in \cite{Ch} on compact metric graphs. In \cite{Do0}, Dovetta, Jeanjean and Serra proved the existence of normalized solutions to equation \eqref{eqs} on noncompact metric graphs satisfying some topological assumptions for $p > 6$, using monotonicity trick, energy estimates and discussions related to Morse index type information.

For any noncompact metric graph $\G$ with a finite number of edges and a non-empty compact
core $\K$, \cite{Gn,No} introduced the following modification of equation \eqref{eqs}, which assumes the nonlinearity affects only the non-empty compact core $\K$
\begin{equation}
\label{eqsloc}
    -u'' - \lambda u= \chi_\K\abs{u}^{p-2}u,
\end{equation}
where $\chi_\K$ is the characteristic function of the compact core $\K$. For $2<p<6$, the existence and multiplicity of normalized solutions to equation \eqref{eqsloc} have been studied in \cite{Se,Se1,Te}. In \cite{Do2}, Dovetta and Tentarelli showed that, for $p=6$, the existence of normalized ground states to equation \eqref{eqsloc} depends on the topology of $\G$. For $p > 6$, the existence of normalized solutions to equation \eqref{eqsloc} was proved in \cite{Bort} using monotonicity trick, blow-up analysis and discussions on Morse index type information. Furthermore, by employing a more general blow-up analysis, the multiplicity  of normalized solutions to equation \eqref{eqsloc} for $p > 6$ was proved in \cite{Ca}.

However, the energy functionals associated with Dirac equations are strongly indefinite. Thus, the methods previously developed  for on (NLSE) cannot be directly applied to nonlinear Dirac equations (NLDE).

The Dirac operator on metric graphs is denoted by
\begin{equation}
\label{eqdef}
    \D := -\imath c\frac{d}{dx} \otimes \sigma_1 +mc^2\otimes\sigma_3
\end{equation}
where $m > 0$ represents the mass of the generic particle of the system and $c > 0$ represents the speed of light, and $\sigma_1, \sigma_3$ are Pauli matrices, i.e.,
\begin{equation}
\label{eqdefsig}
\sigma_1:=\left(\begin{array}{ll}
0 & 1 \\
1 & 0
\end{array}\right) \quad \text { and } \quad \sigma_3:=\left(\begin{array}{cc}
1 & 0 \\
0 & -1
\end{array}\right).
\end{equation}

Consider the following (NLDE) on metric graphs
\begin{equation}\label{eqdinvolvet}
     \imath\partial_t \psi = \D\psi + \abs{\psi}^{p-2}\psi,
\end{equation}
where $p > 2$. The physical motivation for such a model primarily comes from solid state physics and nonlinear optics, see \cite{Had,Tr} and references therein.
\cite{Sa} suggests studying the stationary solutions to equation \eqref{eqdinvolvet} on $3$-star graph, that is, $\psi (t, x) = e^{- \imath\omega t} u(x)$, with $\omega  \in \mathbb{R}$, leading to the equation
\begin{equation}
\label{eqdexnon}
    \D u - \omega u= \abs{u}^{p-2}u.
\end{equation}
In \cite{Bo}, Borrelli, Carlone and Tentarelli considered the case of a localized nonlinearity, that is
\begin{equation}
\label{eqdnoa}
    \D u - \omega u= \chi_\K\abs{u}^{p-2}u.
\end{equation}
More precisely, they proved that for every $\omega \in (-mc^2,mc^2)$, there exists infinitely many (distinct) pairs of bound states (arising as critical points of the corresponding action functional) of frequency $\omega$ of \eqref{eqdnoa} and, which converge to the bound states of \eqref{eqs} in the nonrelativistic limit, namely as $c \to \infty$, when $2<p<6$. For the Cauchy problem associated with equations \eqref{eqdinvolvet} on noncompact metric graphs and the existence of solutions to equations \eqref{eqdexnon} on star graphs, one can refer to \cite{Bo1}.

Our focus here is on normalized solutions of \eqref{eqdnoa}, i.e., solutions satisfying $\int_\G\abs{u}^2\,dx = \rho$ with $\rho > 0$.  For simplicity, we assume $\int_\G\abs{u}^2\,dx = 1$. With a slight modification, our results can be extended to the other vallues of $\rho > 0$.

In \cite{Ding}, Ding, Yu and Zhao studied (NLDE) in $\mathbb{R}^3$, and they showed the existence of normalized solitary wave solutions of (NLDE), in the $H^\frac{1}{2}$-subcritical case (for $\mathbb{R}^3$, it is $2<p<\frac{8}{3}$), using a reduction and  perturbation argument.  Firstly, they introduced the perturbation functional $I_{r,\mu}$ associate with (NLDE) in $\mathbb{R}^3$ and the corresponding reduced functional $J_{r,\mu}$, where $r > 1$ and $\mu>0$ are parameters. Secondly, they proved that the reduced functional possesses the mountain geometry structure, constructed test-functions by Fourier transform, and gave an upper bound estimate for the frequency $\omega$ with the test-functions. Thirdly, they showed, for $\mu>0$ small enough and  $r>0$ large enough, there exists a critical point sequence $\{u_{r,\mu}\}$ of $J_{r,\mu}$ and $\{u_{r,\mu}\}$ possesses a subsequence which converges to a non-trivial solution to (NLDE) as $r \to \infty$ and $\mu \to 0^+$. Finally, they established a non-existence result, and then obtained the existence of normalized solutions.

To the best of our knowledge, the existence of normalized solutions for nonlinear Dirac equations (NLDE) on metric graphs remains unexplored in the current literature. This gap motivates us to explore this class of problems and establish new results in this direction.  Inspired by \cite{Bo,Bu,Ding}, in this paper, we study the following (NLDE) on noncompact metric graphs
\begin{equation}
\label{eq1}
    \D u - \omega u= a\chi_\K\abs{u}^{p-2}u,
\end{equation}
where $p > 2$, $a > 0$ and $\chi_\K$ is the characteristic function of the compact core $\K$. Specifically, in the $H^\frac{1}{2}$-subcritical case (for metric graphs, it is $2<p<4$), we prove the existence of normalized solutions to equation \eqref{eq1} for $a$ small enough in Theorem \ref{th2} and for all $a>0$ in Theorem \ref{thalla} when $\G$ satisfies some certain topological assumption. Moreover, we study the condition when there exists a normalized solutions of the nonlinear Dirac equation in the $H^\frac{1}{2}$-critical and supercritical cases (for metric graphs, they are $p=4$ and $p > 4$) in Thoerem \ref{th3}, and, for $4\leq p <6$, we construct some noncompact metric graphs that admit normalized solutions to equation\eqref{eq1} for $a>0$ small enough, in Theorem \ref{th4} and Corollary \ref{co1}. We also extend the results above to the case $a<0$ (see equation \eqref{eq-a}) in Theorem \ref{th-a1} and \ref{th-a2}, then, for all $p>2$, we prove the existence of normalized solutions to equation \eqref{eq-a} for $a$ small enough in Theorem \ref{th-a3} when $\lambda = -mc^2$ is an eigenvalue of the operator $\D$ (we provide the topological assumption on $\G$ in Remark \ref{remarknegative} under which $\lambda = -mc^2$ is an eigenvalue of $\D$). Finally, we study the influence of the parameters $m, c > 0$ on the existence of normalized solutions to equations \eqref{eq1} and \eqref{eq-a}.

Due to the absence of Fourier transform on general metric graphs, the method used to construct test-functions in $\mathbb{R}^3$, which is crucial to give an upper bound estimate for the frequency $\omega$, is no longer applicable. Therefore, we propose a new approach for  constructing test-functions without replying on Fourier transform. Besides, in the prove of Theorem \ref{thalla}, we also need to overcome the difficulty that the corresponding spectrum results in \cite{Bu} are missing for Dirac operators. Moreover, in $H^\frac{1}{2}$-critical and supercritical cases, we cannot establish a non-existence result as in $H^\frac{1}{2}$-subcritical case. However, by imposing an additional assumption on the energy of solutions, we deprive a non-existence result for $p \geq 4$ (see Lemma \ref{lempgeq4}), which is essential for the proof of Theorem \ref{th4}, Corollary \ref{co1}, Theorem \ref{th-a2} and Theorem \ref{th-a3}.

Now, we recall some basic settings of Dirac equations on metric graphs, as introduced in \cite{Bo,Bo1}.
Consistently, a function $u: \G \to \mathbb{C}$ is actually a family of functions $(u_e)$, where $u_e: I_e \to \mathbb{C}$ is the restriction of $u$ to the edge $e$. The usual $L^p$ spaces on the metric graph  are defined as follows
as
$$
L^p(\mathcal{G}):=\bigoplus_{e \in \mathrm{E}} L^p(I_e)
$$
with norms
$$
\norm{u}_{L^p(\mathcal{G})}^p := \sum_{e \in \mathrm{E}}\norm{u_e}_{L^p(I_e)}^p, \quad \text { if } p \in[1, \infty), \quad \text { and } \quad \norm{u}_{L^{\infty}(\mathcal{G})}:=\max _{e \in \mathrm{E}}\norm{u_e}_{L^{\infty}(I_e)},
$$
while the Sobolev spapces $H^m(\G)$ are defined as by
by
$$
H^m(\mathcal{G}):=\bigoplus_{e \in \mathrm{E}} H^m(I_e)
$$ with norm

$$
\norm{u}_{H^m(\G)}^2=\sum_{i=0}^m\norm{u^{(i)}}_{L^2(\mathcal{G})}^2.
$$
Accordingly, a spinor $u = (u^1, u^2)^T: \G \to \mathbb{C}^2$ is a family of $2$-spinors
$$
u_e=\binom{u_e^1}{u_e^2}: I_e \longrightarrow \mathbb{C}^2, \quad \forall e \in \mathrm{E},
$$
and thus
$$
L^p(\mathcal{G}, \mathbb{C}^2):=\bigoplus_{e \in \mathrm{E}} L^p(I_e, \mathbb{C}^2),
$$
endowed with the norms
$$
\norm{u}_{L^p(\G, \mathbb{C}^2)}^p := \sum_{e \in \mathrm{E}}\norm{u_e}_{L^p(I_e, \mathbb{C}^2)}^p, \quad \text { if } p \in[1, \infty), \quad \text { and } \quad \norm{u}_{L^{\infty}(\G, \mathbb{C}^2)}:=\max_{e \in \mathrm{E}}\norm{u_e}_{L^{\infty}(I_e, \mathbb{C}^2)},
$$
whereas
$$
H^m(\mathcal{G}, \mathbb{C}^2):=\bigoplus_{e \in \mathrm{E}} H^m(I_e, \mathbb{C}^2)
$$
with the norms
$$
\|u\|_{H^m(\G, \mathbb{C}^2)}^2:=\sum_{e \in \mathrm{E}}\norm{u_e}_{H^m(I_e, \mathbb{C}^2)}^2.
$$
For convenience, we often abbreviate $\norm{u}_{L^p(\G, \mathbb{C}^2)}$ as $\norm{u}_p$.

Note that, in the case of (NLSE), the condition that $u$ is continuous on $\G$ is often contained in $H^1(\G)$. However, in the case of (NLDE), we shall keep the conditions of spinors at the vertices separate, as defiened in the following.
\begin{definition}[{\cite[Definition 2.3]{Bo}}]
\label{defD}
Let $\mathcal{G}$ be a metric graph and  $m, c>0$. We call the Dirac operator with Kirchhoff-type vertex conditions the operator $\mathcal{D}: L^2(\mathcal{G}, \mathbb{C}^2) \rightarrow L^2(\mathcal{G}, \mathbb{C}^2)$ with action
\begin{equation}\label{eqdefde}
    \mathcal{D}_{\left.\right|_{I_e}} u=\mathcal{D}_e u_e:=-\imath c \sigma_1 u_e^{\prime}+m c^2 \sigma_3 u_e \quad \forall e \in \mathrm{E},
\end{equation}
$\sigma_1, \sigma_3$ being the matrices defined in \eqref{eqdefsig}, and the domain of $D$ given by
$$
\operatorname{dom}(\mathcal{D}):=\{u \in H^1(\mathcal{G}, \mathbb{C}^2): u \text { satisfies \eqref{eqdefdom1} and \eqref{eqdefdom2}}\}
$$
where
\begin{equation}
\label{eqdefdom1}
    u_e^1(\mathrm{v})=u_f^1(\mathrm{v}) \quad \forall e, f \succ \mathrm{v}, \quad \forall \mathrm{v} \in \mathcal{K},
\end{equation}
\begin{equation}
\label{eqdefdom2}
    \sum_{e \succ v} u_e^2(\mathrm{v})_{ \pm}=0 \quad \forall \mathrm{v} \in \mathcal{K},
\end{equation}
"$e \succ v$" meaning that the edge $e$ is incident at the vertex v and $u_e^2(\mathrm{v})_{ \pm}$standing for $u_e^2(0)$ or $-u_e^2(\ell_e)$ according to whether $x_e$ is equal to $0$ or $\ell_e$ at $v$.
\end{definition}
We recall some basic properties of the Dirac operator$\D$, as discussed in \cite{Bo}. The operator $\D$  is self-adjoint on $L^2(\mathcal{G}, \mathbb{C}^2)$. In addition, and its spectrum is given by
\begin{equation}
\label{eqsp}
    \sigma{(\D)}=( - \infty , - mc^2
] \cup [mc^2, +\infty ).
\end{equation}
Next, we define the associated quadratic form $\mathcal{Q}_{\mathcal{D}}$ and its domain $\operatorname{dom}(\mathcal{Q}_{\mathcal{D}})$ as follows,
$$
\operatorname{dom}(\mathcal{Q}_{\mathcal{D}}):=\left\{u \in L^2(\mathcal{G}, \mathbb{C}^2): \int_{\sigma(\mathcal{D})}|\nu| d \mu_u^{\mathcal{D}}(\nu)\right\}, \quad \mathcal{Q}_{\mathcal{D}}(u):=\int_{\sigma(\mathcal{D})} \nu d \mu_u^{\mathcal{D}}(\nu),
$$
where $\mu_u^{\mathcal{D}}$ denotes the spectral measure associated with $\mathcal{D}$ and $u$. Additionally, $\operatorname{dom}(\mathcal{Q}_{\mathcal{D}})$ is a closed subspace of
$$
H^{1 / 2}(\mathcal{G}, \mathbb{C}^2):=\bigoplus_{e \in \mathrm{E}} H^{1 / 2}(I_e) \otimes \mathbb{C}^2
$$
with respect to the norm induced by $H^{1 / 2}(\mathcal{G}, \mathbb{C}^2)$. As a consequence of Sobolev embeddings, it follows that
\begin{equation*}
    \operatorname{dom}(\mathcal{Q}_{\mathcal{D}}) \hookrightarrow L^p(\G,\mathbb{C}^2) \quad \forall p \in [2, \infty )
\end{equation*}
and that, in addition, the embedding $\operatorname{dom}(\mathcal{Q}_{\mathcal{D}}) \hookrightarrow L^p(\K,\mathbb{C}^2)$ is compact, for $p \in [2, \infty )$, due to the compactness
of $\K$. For simplicity, we denote $\operatorname{dom}(\mathcal{Q}_{\mathcal{D}})$ by $Y$.

It is possible to define the inner product on $Y$ by
\begin{equation*}
    (u,v) =\Re(\abs{\D}^\frac{1}{2}u,\abs{\D}^\frac{1}{2}v)_2,
\end{equation*}
where $\Re$ denotes the real part of a complex number, and the induced norm is given
\begin{equation*}
    \norm{u} = (u,u)^\frac{1}{2}.
\end{equation*}
From \eqref{eqsp}, for any $u \in Y$, we have
$$\norm{u}^2 \geq mc^2\norm{u}_2^2.$$

According to \eqref{eqsp}, the space $L^2(\G, \mathbb{C}^2)$ possesses the orthogonal decomposition:
\begin{equation}
     L^2(\G, \mathbb{C}^2) = L^- \oplus L^+.
\end{equation}
Moreover, we can decompose the form domain $Y$ as the orthogonal sum of the positive and negative spectral subspaces for the operator $\D$,
i.e.,
\begin{equation}
    Y = Y^- \oplus Y^+ \text{, where } Y^\pm = L^\pm \cap Y.
\end{equation}
As a result, every $u \in Y$ can be written as
\begin{equation*}
    u = P^+u +P^-u =: u^+ + u^-,
\end{equation*}
where
$P^\pm$ are the orthogonal projectors onto $Y^\pm$.

\begin{theorem}
\label{th1}
    Let $\G$ be any noncompact metric graph with a non-empty compact core $\K$, and let $2< p < 4$. Then, we have\\
    \begin{enumerate}[label=\rm(\roman*)]
        \item either there exists a non-trivial $u \in \operatorname{dom}(\D)$ (as defined in Definition \ref{defD}) and $\omega \in (-mc^2, mc^2)$ such that
    \begin{equation*}
    \left\{\begin{aligned}
        &\D_e u_e - \omega u_e= a\chi_\K\abs{u_e}^{p-2}u_e \quad \forall e\in \mathrm{E},\\
        &\int_\G\abs{u}^2\,dx = 1.
    \end{aligned}
    \right.
\end{equation*}
\item  or for all $\omega \in (-mc^2, mc^2)$, there exists a non-trivial $u_\omega \in \operatorname{dom}(\D)$ such that
    \begin{equation*}
    \left\{\begin{aligned}
        &\D_e u_{\omega,e} - \omega u_{\omega,e} = a\chi_\K\abs{u_{\omega,e}}^{p-2}u_{\omega,e} \quad\forall e\in \mathrm{E},\\
        &\int_\G\abs{u_\omega}^2\,dx < 1.
    \end{aligned}
    \right.
    \end{equation*}
    \end{enumerate}
\end{theorem}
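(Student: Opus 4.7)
The plan is to follow the Ding--Yu--Zhao perturbation-reduction strategy, adapted to the metric-graph Dirac setting, to produce for every $\omega\in(-mc^2,mc^2)$ a non-trivial bound state $u_\omega$ of $\D u-\omega u=a\chi_\K|u|^{p-2}u$, and then to read off the dichotomy by tracking the $L^2$-mass $\rho(\omega):=\|u_\omega\|_2^2$. Fixing $\omega$, solutions should arise as critical points of the strongly indefinite action
\begin{equation*}
I_\omega(u)=\frac12\|u^+\|^2-\frac12\|u^-\|^2-\frac{\omega}{2}\|u\|_2^2-\frac{a}{p}\int_\G\chi_\K|u|^p\,dx
\end{equation*}
on $Y=Y^+\oplus Y^-$. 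To tame the indefiniteness I would introduce an auxiliary regularising term with parameters $r>1$ and $\mu>0$, producing $I_{r,\mu,\omega}$; for each $u^+\in Y^+$ one then locates a unique maximiser $h_{r,\mu}(u^+)\in Y^-$ and passes to the reduced functional $J_{r,\mu,\omega}(u^+):=I_{r,\mu,\omega}(u^++h_{r,\mu}(u^+))$ on $Y^+$, on which one can verify the mountain-pass geometry and extract a Cerami sequence at the min-max level $c_{r,\mu,\omega}$.

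The next step -- and the one where I expect the main new work -- is to construct useful test functions on $\G$ without recourse to the Fourier transform. Here I would exploit the geometry of the compact core directly: pick an edge $e_0\subset\K$, localise a smooth profile on $e_0$, and extend by zero outside $e_0$ in a manner compatible with the Kirchhoff-type conditions \eqref{eqdefdom1}--\eqref{eqdefdom2}; its positive-spectral projection $P^+$ then serves as the trial direction for the mountain pass, yielding an upper bound on $c_{r,\mu,\omega}$ and, through the action identity, a quantitative control on $\omega$. Feeding this into the limits $\mu\to0^+$ and $r\to\infty$, and using the compact embedding $Y\hookrightarrow L^p(\K,\mathbb{C}^2)$ (which holds because $\K$ is compact) to recover non-triviality, I would extract a weak limit $u_\omega\in\operatorname{dom}(\D)$ solving the Dirac equation with frequency $\omega$.

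Finally, I would read off the dichotomy. Setting $\rho(\omega):=\|u_\omega\|_2^2>0$, either some $\omega^\ast\in(-mc^2,mc^2)$ yields $\rho(\omega^\ast)=1$, which is case (i), or $\rho(\omega)\neq 1$ for every $\omega$. In the latter situation the $H^{1/2}$-subcritical range $2<p<4$ guarantees that the perturbative solutions can be taken arbitrarily small in $L^2$ as $\omega$ approaches $\pm mc^2$, which together with a scaling argument in the $L^p$-norm and a continuity argument in $\omega$ excludes $\rho(\omega)>1$ uniformly and forces $\rho(\omega)<1$ throughout $(-mc^2,mc^2)$, giving case (ii). The principal difficulty is the Fourier-free test-function construction in the middle step, which must both respect the vertex conditions of Definition \ref{defD} and be sharp enough to reach into the spectral gap; a secondary issue is ensuring enough regularity of the map $\omega\mapsto\rho(\omega)$ to make the dichotomy rigorous.
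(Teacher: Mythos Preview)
Your plan misreads the Ding--Yu--Zhao penalization. In the paper (as in \cite{Ding}) one does \emph{not} fix $\omega$ and search for critical points of $I_\omega$; rather one works with $I_{r,\mu}(u)=I_0(u)-H_{r,\mu}(u)$ (no $\omega$-term at all), where the penalty $H_{r,\mu}(u)=f_r((T_\mu u,u)_2)$ with $f_r(s)=s^r/(1-s)$ is designed precisely to confine $u$ to the set $(T_\mu u,u)_2<1$, hence (as $\mu\to 0$) to $\|u\|_2^2\le 1$. The frequency then \emph{emerges} as the multiplier $\omega_n=2f_r'((T_\mu u_n,u_n)_2)\ge 0$; the test-function bound $c_\infty<m/2$ (Lemma~\ref{lemcin}) forces $\limsup_n\omega_n<m$ via Lemma~\ref{lemwn}, and in the double limit $r\to\infty$, $\mu\to 0$ one automatically lands on either $\|u_0\|_2=1$ with $\omega_0\in[0,m)$, or $\|u_0\|_2<1$ with $\omega_0=0$ (since $f_{r}'(s)\to 0$ for any fixed $s<1$). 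To cover the full gap $(-m,m)$ the paper then reruns this scheme for the shifted operator $\D-sI$, $s\in(-m,m)$; Theorem~\ref{th1} is simply the union over $s$ of these two alternatives.

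Your alternative route---fix $\omega$, produce $u_\omega$, set $\rho(\omega)=\|u_\omega\|_2^2$, then argue by continuity---has a genuine gap at the last step. Nothing in your construction prevents $\rho(\omega)>1$, and the exclusion you sketch rests on (a) continuity of $\omega\mapsto\rho(\omega)$, which you cannot assert without uniqueness or a selection principle, and (b) the unproved claim that $\rho(\omega)\to 0$ near the gap edges. The penalization bypasses both issues by building the constraint $\|u\|_2\le 1$ into the admissible set $U_\mu$ from the outset, so the dichotomy is structural rather than obtained a posteriori. Incidentally, the paper's test function is not localised on one edge either: it takes $\varphi_b=(1,0)^T$ on the \emph{entire} core $\K$ with linear decay $\max\{0,1-bx\}$ on every half-line, which yields directly $\|\varphi_b^+\|^2-m\|\varphi_b^+\|_2^2\le Nb/(2m)$ and makes the estimate $c_\infty<m/2$ elementary.
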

Define
\begin{equation}
    \label{eqa0}
a_0 = \frac{m^\frac{4-p}{2}c^{4-p}}{2C_{p,\K}},
\end{equation}
where $C_{p,\K}$ denote the constant in the Gagliardo-Nirenberg-Sobolev inequality (see Lemma \ref{lemgnsg}).
\begin{theorem}\label{th2}
    Under the assumptions of Theorem \ref{th1} and $0<a<a_0$,  the second
case of Theorem \ref{th1} cannot occur.
Specially, there exists a non-trivial $u \in \operatorname{dom}(\D)$ (as defined in Definition \ref{defD}) and $\omega \in (-mc^2, mc^2)$ such that
    \begin{equation*}
    \left\{\begin{aligned}
        &\D_e u_e - \omega u_e= a\chi_\K\abs{u_e}^{p-2}u_e \quad \forall e\in \mathrm{E},\\
        &\int_\G\abs{u}^2\,dx = 1.
    \end{aligned}
    \right.
\end{equation*}
\end{theorem}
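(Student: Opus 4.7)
My plan is to argue by contradiction: assuming the second alternative of Theorem~\ref{th1} still holds while $0<a<a_0$, I exhibit a quantitative violation of $a<a_0$. Since case~(ii) of Theorem~\ref{th1} supplies a non-trivial sub-unit solution for \emph{every} $\omega\in(-mc^2,mc^2)$, I am free to choose $\omega=0$, which yields the sharpest constant. Let $u\in\operatorname{dom}(\D)$ be the resulting non-trivial solution of $\D u=a\chi_\K\abs{u}^{p-2}u$ with $\norm{u}_2^2<1$. The decisive step is to test this equation against $v=u^+-u^-$: because $Y^+$ and $Y^-$ are orthogonal in both $L^2$ and the form $\mathcal{Q}_\D$, and $\D$ acts as $\pm\abs{\D}$ on $Y^\pm$, the pairing $\langle\D u,v\rangle_2$ collapses to $\norm{u^+}^2+\norm{u^-}^2=\norm{u}^2$, converting the strongly indefinite Dirac form into the coercive norm on the left. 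The identity reads
\begin{equation*}
\norm{u}^2 \;=\; a\,\Re\langle\abs{u}^{p-2}u,\,u^+-u^-\rangle_{L^2(\K)}.
\end{equation*}

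Next, I would estimate the right-hand side by H\"older followed by two applications of the Gagliardo-Nirenberg-Sobolev inequality (Lemma~\ref{lemgnsg}): one to $\norm{u}_{L^p(\K)}^{p-1}$, and one to each of $\norm{u^\pm}_{L^p(\K)}$ combined via Minkowski, together with the elementary bounds $\norm{u^\pm}\le\norm{u}$ and $\norm{u^\pm}_2\le\norm{u}_2$ inherited from the orthogonal decomposition. The exponent bookkeeping $(p-2)(p-1)/p+(p-2)/p=p-2$ on the $\norm{u}$-side and $2(p-1)/p+2/p=2$ on the $\norm{u}_2$-side recombines cleanly to produce
\begin{equation*}
\norm{u}^2 \;\le\; 2a\,C_{p,\K}\,\norm{u}^{p-2}\,\norm{u}_2^2.
\end{equation*}

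Finally, since $2<p<4$, dividing by $\norm{u}^{p-2}>0$ (nonzero because $u$ is non-trivial) gives $\norm{u}^{4-p}\le 2a\,C_{p,\K}\,\norm{u}_2^2$. Raising the spectral-gap inequality $\norm{u}^2\ge mc^2\norm{u}_2^2$ to the positive power $(4-p)/2$ and inserting yields $(mc^2)^{(4-p)/2}\,\norm{u}_2^{p-2}\le 2a\,C_{p,\K}$, equivalently $\norm{u}_2^{p-2}\ge a_0/a>1$ by definition~\eqref{eqa0} and the hypothesis $a<a_0$. Since $p>2$, this forces $\norm{u}_2^2>1$, contradicting $\norm{u}_2^2<1$; hence the second alternative of Theorem~\ref{th1} cannot occur, and the first alternative furnishes the desired normalized solution.

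The main conceptual obstacle is the strong indefiniteness of $\mathcal{Q}_\D$: pairing the equation with $u$ itself produces only the sign-indefinite quantity $\mathcal{Q}_\D(u)$, which is useless for coercive control. Replacing $u$ by $u^+-u^-$ is the single step that converts the left-hand side into $\norm{u}^2$, and the compatibility of the two Gagliardo-Nirenberg-Sobolev exponents is what makes the sharp threshold $a_0=(mc^2)^{(4-p)/2}/(2C_{p,\K})$ emerge unambiguously.
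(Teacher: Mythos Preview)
Your proof is correct and follows essentially the same route as the paper: test the equation with $u^+-u^-$ to obtain $\norm{u}^2$ on the left, apply H\"older and Lemma~\ref{lemgnsg} to the right-hand side, then combine with the spectral-gap inequality $\norm{u}^2\ge mc^2\norm{u}_2^2$ and the constraint $\norm{u}_2<1$ to reach a contradiction when $a<a_0$. The only difference is cosmetic---the paper folds the spectral gap into the Gagliardo--Nirenberg estimate \emph{before} applying H\"older (see \eqref{eqho2}), arriving directly at $\norm{u_0}^2\le \frac{2aC_{p,\K}}{m^{(4-p)/2}}\norm{u_0}^2$, while you apply it at the end---but the ingredients and the threshold $a_0$ are identical. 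One small slip: your displayed intermediate inequality should read $(mc^2)^{(4-p)/2}\,\norm{u}_2^{2-p}\le 2a\,C_{p,\K}$ (exponent $2-p$, not $p-2$); your conclusion $\norm{u}_2^{p-2}\ge a_0/a$ is nonetheless the correct rearrangement of that corrected inequality.
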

Next, we examine the conditions under which normalized solutions to  equation \eqref{eq1} for all $a>0$ and for all $p\geq4$.
\begin{theorem}\label{thalla}
Under the assumptions of Theorem \ref{th1}, suppose that $\K$ is a tree with at most one leaf incident with no half-line in $\G$ (see Definition \ref{defcomg}). Then, for all $a>0$, there exists a non-trivial $u \in \operatorname{dom}(\D)$ (defined by Definition \ref{defD}) and $\omega \in (-mc^2, mc^2)$ such that
    \begin{equation*}
    \left\{\begin{aligned}
        &\D_e u_e - \omega u_e= a\chi_\K\abs{u_e}^{p-2}u_e \quad \forall e\in \mathrm{E},\\
        &\int_\G\abs{u}^2\,dx = 1.
    \end{aligned}
    \right.
\end{equation*}
\end{theorem}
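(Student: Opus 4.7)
The plan is to combine the dichotomy of Theorem~\ref{th1} with a spectral analysis of $\D$ tailored to the topological hypothesis on $\K$, so as to rule out alternative~(ii) for every $a > 0$; alternative~(i) then delivers the desired normalized solution with $\omega$ in the gap.

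I would argue by contradiction. Assume alternative~(ii) holds: for each $\omega \in (-mc^2, mc^2)$ there exists a nontrivial $u_\omega \in \operatorname{dom}(\D)$ solving $\D u_\omega - \omega u_\omega = a\chi_\K|u_\omega|^{p-2}u_\omega$ with $m(\omega) := \|u_\omega\|_2^2 < 1$. The strategy is to analyse the mass function $\omega \mapsto m(\omega)$ across the spectral gap and show that, under the topological hypothesis on $\K$, it must take values both strictly below and at least equal to $1$. Continuity then yields $\omega_* \in (-mc^2, mc^2)$ with $m(\omega_*) = 1$, contradicting alternative~(ii) and completing the proof.

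The crucial step is the spectral analysis that produces the ``large mass'' end of the curve. Under the assumption that $\K$ is a tree with at most one leaf incident with no half-line in $\G$, I would first show that $\lambda = \pm mc^2$ are not eigenvalues of $\D$: the topological condition is calibrated precisely so that the Kirchhoff-type vertex conditions~\eqref{eqdefdom1}--\eqref{eqdefdom2}, combined with the matrix-valued first-order structure of $\D$ from~\eqref{eqdefde}, obstruct the construction of any compactly supported eigenspinor at the gap edges (a cycle in $\K$, or two or more ``free'' leaves, would be required to close up the vertex equations). From this absence of embedded eigenspinors I would derive a quantitative lower bound on $m(\omega)$ as $\omega$ approaches one edge of the gap, while continuity of $m(\cdot)$ in $\omega$ follows from standard implicit-function/elliptic-regularity arguments together with the compact embedding $Y \hookrightarrow L^p(\K)$. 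At the other end, the perturbative construction underlying Theorem~\ref{th2} provides solutions of arbitrarily small mass near a suitable bifurcation frequency. These two ingredients close the intermediate-value argument.

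The main obstacle, as the authors explicitly flag, is the spectral step: the analogues of the Schr\"odinger-operator results from~\cite{Bu} must be rederived from scratch for $\D$. Unlike the scalar second-order setting of~\cite{Bu}, here one has to track both spinor components $u^1_e, u^2_e$ along every edge and exploit the Pauli-matrix coupling~\eqref{eqdefsig} at each vertex. The combinatorial hypothesis on $\K$ is precisely what is needed to make this vertex-by-vertex analysis rule out nontrivial compactly supported eigenspinors at $\pm mc^2$, and thereby force the mass behaviour that drives the proof.
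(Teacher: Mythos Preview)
Your overall strategy---argue by contradiction and exploit the tree hypothesis as $\omega$ approaches a gap edge---matches the paper's, but the mechanism you propose has two genuine gaps.

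First, continuity of $\omega\mapsto m(\omega)$ is neither proved nor used. The $u_\omega$ in alternative~(ii) are mountain-pass critical points obtained at each fixed $\omega$ via the perturbed functionals $J_{s,r,\mu}$ and the limits $r\to\infty$, $\mu\to0$; there is no implicit-function branch and no uniqueness, so no a~priori continuity of the selection $\omega\mapsto u_\omega$. Your intermediate-value argument on $m(\omega)$ therefore does not close.

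Second, and more fundamentally, the contradiction does not come from $m(\omega)$ reaching or exceeding $1$ near an edge. The paper keeps $\|u_s\|_2<1$ throughout and instead uses the energy bounds $c_\infty\leq c_{s,\infty}<\tfrac{m-s}{2}$ (for $s\in(-m,0)$) to obtain both a uniform lower bound on $\int_\K|u_{s}|^p$ and a uniform $Y$-bound; along $s_n\to -m$, compact embedding into $L^p(\K,\mathbb{C}^2)$ then yields a \emph{nontrivial} weak limit $u_{-m}$ solving the \emph{nonlinear} edge equation $\D u_{-m}+mu_{-m}=a\chi_\K|u_{-m}|^{p-2}u_{-m}$. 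On every half-line this equation is linear and forces $u_{-m}\equiv 0$ there; the tree hypothesis then enters through Lemma~\ref{lemtree}, an ODE-uniqueness propagation of zeros from the leaves inward, giving $u_{-m}\equiv 0$ on all of $\G$ and contradicting nontriviality. So what must be excluded is a nontrivial solution of the limiting \emph{nonlinear} problem that vanishes on the half-lines, not merely a compactly supported linear eigenspinor of $\D$ at $\pm mc^2$, and no mass-continuity is invoked at all.
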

\begin{theorem}
    \label{th3}
    Let $\G$ be any noncompact metric graph with a non-empty compact core $\K$, and let $p \geq 4$. Then for all $\omega_*\in(-mc^2, mc^2)$, there exists $a^*_{\omega_*}>0$ such that, for all $a>a^*_{\omega_*}$, we have\\
    \begin{enumerate}[label=\rm(\roman*)]
        \item either there exists a non-trivial $u \in \operatorname{dom}(\D)$ (as defined in Definition \ref{defD}) and $\omega \in [\omega_*, mc^2)$ such that
    \begin{equation*}
    \left\{\begin{aligned}
        &\D_e u_e - \omega u_e= a\chi_\K\abs{u_e}^{p-2}u_e \quad \forall e\in \mathrm{E},\\
        &\int_\G\abs{u}^2\,dx = 1.
    \end{aligned}
    \right.
\end{equation*}
\item  or for all $\omega \in [\omega_*, mc^2)$, there exists a non-trivial $u_\omega \in \operatorname{dom}(\D)$ such that
    \begin{equation*}
    \left\{\begin{aligned}
        &\D_e u_{\omega,e} - \omega u_{\omega,e} = a\chi_\K\abs{u_{\omega,e}}^{p-2}u_{\omega,e} \quad\forall e\in \mathrm{E},\\
        &\int_\G\abs{u_\omega}^2\,dx < 1.
    \end{aligned}
    \right.
    \end{equation*}
    \end{enumerate}
\end{theorem}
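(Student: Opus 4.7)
The plan is to emulate the perturbation-and-reduction strategy behind Theorem \ref{th1}, adapted from \cite{Ding}, but to run it in the $H^{1/2}$-critical and supercritical regime $p\geq 4$. For each fixed $\omega\in[\omega_*, mc^2)$, I would work with the strongly indefinite action functional
$$
I_\omega(u) = \frac{1}{2}\norm{u^+}^2 - \frac{1}{2}\norm{u^-}^2 - \frac{\omega}{2}\norm{u}_2^2 - \frac{a}{p}\int_\K\abs{u}^p\,dx
$$
on $Y = Y^+ \oplus Y^-$, whose nontrivial critical points are precisely the bound states of $\D u - \omega u = a\chi_\K\abs{u}^{p-2}u$. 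Following the scheme of \cite{Ding}, I would introduce the perturbed functional $I_{\omega,r,\mu}$ and its reduction $J_{\omega,r,\mu}$, and detect the mountain-pass geometry along a test-function path built directly on $\G$, i.e., the explicit metric-graph construction already used for Theorem \ref{th1} which sidesteps the Fourier transform. Compactness of the embedding $Y\hookrightarrow L^p(\K,\mathbb{C}^2)$ then lets me pass to the limit $r\to\infty$, $\mu\to 0^+$ and extract, for each admissible $\omega$, a nontrivial bound state $u_\omega$.

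The essential difference with the subcritical case is that, for $p\geq 4$, the Gagliardo-Nirenberg-Sobolev inequality in Lemma \ref{lemgnsg} no longer subordinates $\int_\K\abs{u}^p\,dx$ to the kinetic form, so neither the mountain-pass geometry nor the boundedness of Palais-Smale sequences is automatic. To restore them I would fix the left endpoint $\omega_*$ and define $a^*_{\omega_*}$ as the infimum of those $a$'s for which the reduced functional admits a strictly positive mountain-pass value lying below the compactness threshold, uniformly in $\omega\in[\omega_*,mc^2)$; the dependence on $\omega_*$ enters through the spectral gap $\norm{u}^2\geq mc^2\norm{u}_2^2$, which tightens as $\omega$ approaches $mc^2$ from below. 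With the resulting family $\{u_\omega\}_{\omega\in[\omega_*,mc^2)}$ in hand, the dichotomy then follows by tracking the mass $M(\omega):=\norm{u_\omega}_2^2$: if $M(\omega_0)=1$ for some $\omega_0$ we are in alternative (i); otherwise, continuity of the reduced critical level in $\omega$, together with the limit behavior of $M(\omega)$ as $\omega\uparrow mc^2$, forces $M(\omega)<1$ on the entire interval, yielding alternative (ii).

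The main obstacle is the first step, namely producing the family of bound states uniformly in $\omega$. Since the Gagliardo-Nirenberg-Sobolev inequality now provides neither the a priori bound on Palais-Smale sequences nor the sub-threshold position of the mountain-pass level, both must be recovered by a careful energy estimate along the explicit test-function path, combined with the requirement $a>a^*_{\omega_*}$ which forces the nonlinear term to dominate the linear indefinite part where needed. A secondary difficulty is making this estimate uniform on the half-open interval $[\omega_*, mc^2)$, which requires tracking how the constants degenerate as $\omega \to (mc^2)^-$; here the compactness of $Y\hookrightarrow L^p(\K,\mathbb{C}^2)$ is the decisive tool, since it converts the uniform mountain-pass estimate into genuine strong convergence of Palais-Smale sequences and hence into a true bound state $u_\omega$ for every admissible $\omega$.
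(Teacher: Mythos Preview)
Your overall plan matches the paper's: for each shift parameter $s\in[\omega_*,m)$, run the \cite{Ding} penalization scheme and read off the dichotomy. But you have misdiagnosed where the obstruction lies. For every $p>2$ the mountain-pass geometry of $J_{r,\mu}$ is automatic (the paper's Lemma just below \eqref{eqcssup} uses only $p>2$ and $r>1$), and Palais--Smale boundedness is built into the penalization because the domain $U_\mu$ is already bounded in $Y$; Step~2 of Theorem~\ref{th31} bounds $\int_\K|\tilde u_n|^p$ from $2I_{r,\mu}-\langle I_{r,\mu}',\cdot\rangle$ and then $\|\tilde u_n^+\|$, with no restriction on $p$. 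The \emph{only} place $p<4$ enters Theorem~\ref{th1} is the threshold estimate $c_{s,\infty}<\frac{m-s}{2}$ of Lemma~\ref{lemcin}, needed so that Lemma~\ref{lemwn} keeps the limiting Lagrange multiplier inside the spectral gap. For $p\ge4$ the paper recovers precisely this estimate in Lemma~\ref{lemcinsup} by coupling the test-function scale to $a$ via $b=a^{2/(3-p)}$ and then taking $a$ large; that is where $a^*_{\omega_*}$ enters, and the $\omega_*$-dependence comes from the factor $\min\{1,1+\omega_*/m\}$ in the bound on $\|h_s(t\varphi_b)\|$, not from any tightening of the inequality $\|u\|^2\ge mc^2\|u\|_2^2$ (which is independent of $\omega$).

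Your proposed route to the dichotomy via continuity of $M(\omega)$ and its behaviour as $\omega\uparrow mc^2$ is unnecessary and would not be the right argument. The penalization confines all competitors to $\{(T_\mu u,u)_2<1\}$, so after passing to the limit each output automatically satisfies $\|u_s\|_2^2\le 1$; Theorem~\ref{th31} (applied to $\D - sI$) then delivers, for each $s$, either $\|u_s\|_2=1$ with frequency $s+\omega_s\in[\omega_*,m)$, or $\|u_s\|_2<1$ with frequency exactly $s$. The dichotomy in Theorem~\ref{th3} is nothing more than: either the first alternative happens for some $s\in[\omega_*,m)$, or the second happens for all such $s$. No continuity in $\omega$ and no limit analysis near $mc^2$ is invoked.
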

Set
\begin{equation}\label{eqa*0}
    a_{*,0} := 2^{-\frac{p}{4}}C_{p,\G}^{-1}m^\frac{4-p}{2}c^{4-p}\left(\frac{p}{p-2}\right)^\frac{-p^2+5p-4}{2p-4},
\end{equation}
where $C_{p,\G}$ denote the constant in the Gagliardo-Nirenberg-Sobolev inequality(see Lemma \ref{lemgnsg}).
\begin{remark}\label{remarkp>4}
    Under the assumptions of Theorem \ref{th3}, let $w_* = 0$. If $0 < a^*_{0} < a_{*,0}C_{p,\G}C_{p,\K}^{-1}$, then, as shown in the proof of Theorem \ref{th4}, by Lemma \ref{lempgeq4}, for all $a\in (a^*_{0},a_{*,0}]$, the first case of Theorem \ref{th3} holds. That is, there exists a non-trivial $u \in \operatorname{dom}(\D)$ (as defined in Definition \ref{defD}) and $\omega \in [0, mc^2)$ such that
    \begin{equation*}
    \left\{\begin{aligned}
        &\D_e u_e - \omega u_e= a\chi_\K\abs{u_e}^{p-2}u_e \quad \forall e\in \mathrm{E},\\
        &\int_\G\abs{u}^2\,dx = 1.
    \end{aligned}
    \right.
\end{equation*}
 However, the relationship $a^*_{0} < a_{*,0}$ is not guaranteed for all noncompact metric graphs and $p \geq 4$.
\end{remark}
For $4 \leq p <6$, we can construct some noncompact metric graphs that admit normalized solution for nonlinear Dirac equation \eqref{eq1}. Let $\G$ be any noncompact metric graph having a non-empty compact core $\K$, and let $\mathcal{H}$ denote a half-line of $\G$. For any $\ell \geq 0$, define $\mathcal{H}_\ell:=[0,\ell]\subset\mathcal{H}$, where we tacitly identified the edge $\mathcal{H}$ with $[0,+\infty)$. The characteristic function  $\chi_{\mathcal{H}_\ell}: \G \to \mathbb{R}$ is defined as
\[
 \chi_{\mathcal{H}_\ell} := \begin{cases}1 & \text { for } x \in \mathcal{H}_\ell\\ 0 & \text { for } x \in \G\backslash\mathcal{H}_\ell. \end{cases}
\]
 Let $\chi_{\ell} := \chi_{\K}+\chi_{\mathcal{H}_\ell}$.
 \begin{theorem}\label{th4}
    Let $\G$ be a noncompact metric graph with a non-empty compact core $\K$, and assume $4 \leq p <6$. Then, for all $\omega_*\in(-mc^2, mc^2)$ and $a > 0$, there exists $\ell^*(\omega_*,a)>0$ such that, for all $\ell>\ell^*(\omega_*,a)$, the following holds:\\
    \begin{enumerate}[label=\rm(\roman*)]
        \item either there exists a non-trivial $u \in \operatorname{dom}(\D)$ (as defined in Definition \ref{defD}) and $\omega \in [\omega_*, mc^2)$ such that
    \begin{equation*}
    \left\{\begin{aligned}
        &\D_e u_e - \omega u_e= a\chi_\ell\abs{u_e}^{p-2}u_e \quad \forall e\in \mathrm{E},\\
        &\int_\G\abs{u}^2\,dx = 1.
    \end{aligned}
    \right.
\end{equation*}
\item  or for all $\omega \in [\omega_*, mc^2)$, there exists a non-trivial $u_\omega \in \operatorname{dom}(\D)$ such that
    \begin{equation*}
    \left\{\begin{aligned}
        &\D_e u_{\omega,e} - \omega u_{\omega,e} = a\chi_\ell\abs{u_{\omega,e}}^{p-2}u_{\omega,e} \quad\forall e\in \mathrm{E},\\
        &\int_\G\abs{u_\omega}^2\,dx < 1.
    \end{aligned}
    \right.
    \end{equation*}
    \end{enumerate}
Moreover, if $0<a<a_{*,0}$, then for all $\ell>\ell^*(0,a)$, the first case (for $\omega_* =0$) occurs, that is, there exists a non-trivial $u \in \operatorname{dom}(\D)$ (as defined in Definition \ref{defD}) and $\omega \in [0, mc^2)$ such that
    \begin{equation*}
    \left\{\begin{aligned}
        &\D_e u_e - \omega u_e= a\chi_\ell\abs{u_e}^{p-2}u_e \quad \forall e\in \mathrm{E},\\
        &\int_\G\abs{u}^2\,dx = 1.
    \end{aligned}
    \right.
    \end{equation*}
\end{theorem}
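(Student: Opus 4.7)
The plan is to recast Theorem \ref{th4} as an instance of Theorem \ref{th3} applied with the nonlinearity localized on the larger compact set $\K_\ell := \K \cup \mathcal{H}_\ell$ in place of $\K$, and then to use the extra length of $\mathcal{H}_\ell$ to drive the resulting threshold below any prescribed value of $a$. Since $\K_\ell$ is still a compact subset of $\G$, the compact embedding $Y \hookrightarrow L^q(\K_\ell,\mathbb{C}^2)$ for $q \in [2,\infty)$ is preserved, and so the perturbation/reduction scheme underlying Theorem \ref{th3} applies verbatim with $\chi_\ell$ in the role of $\chi_\K$. This produces, for each $\omega_* \in (-mc^2, mc^2)$, a threshold $a^*_{\omega_*}(\ell) > 0$ such that the dichotomy (i)--(ii) holds for every $a > a^*_{\omega_*}(\ell)$. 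Defining $\ell^*(\omega_*, a)$ as the infimum of those $\ell$ for which $a^*_{\omega_*}(\ell) < a$, the first half of Theorem \ref{th4} reduces to the key estimate
\begin{equation*}
\lim_{\ell \to +\infty} a^*_{\omega_*}(\ell) = 0.
\end{equation*}

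To establish this estimate I would exhibit, on $\mathcal{H}_\ell$, a family of test spinors $\phi_\ell \in \operatorname{dom}(\D)$ for which the quotient governing the mountain-pass level of the reduced functional (essentially $\mathcal{Q}_\D(\phi_\ell)/\norm{\phi_\ell}_{L^p(\K_\ell,\mathbb{C}^2)}^2$, after the $\omega_*$-shift) tends to zero as $\ell \to \infty$. A natural construction is to start from an explicit solution of $\D\phi = \omega_* \phi$ on the free half-line at a frequency in the spectral gap, truncate and mollify it to fit in $\operatorname{dom}(\D)$ with the Kirchhoff-type vertex conditions of Definition \ref{defD}, and then glue in $\sim \ell$ disjointly supported translated copies along $\mathcal{H}_\ell$. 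Such $\phi_\ell$ will have $\norm{\phi_\ell}_p^p$ growing linearly with $\ell$ while $\mathcal{Q}_\D(\phi_\ell)$ and $\norm{\phi_\ell}_2^2$ grow no faster than linearly as well, which after normalization lowers the admissible $a$ with $\ell$ and yields the desired decay of $a^*_{\omega_*}(\ell)$.

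For the \emph{moreover} assertion, fix $\omega_* = 0$ and $0 < a < a_{*,0}$ with $a_{*,0}$ given by \eqref{eqa*0}, and pick $\ell > \ell^*(0,a)$, so that the dichotomy of the first part is available. To rule out case (ii) I would invoke the non-existence Lemma \ref{lempgeq4}: any $u_\omega$ as in (ii) with $\omega \in [0, mc^2)$ and $\norm{u_\omega}_2^2 < 1$ must, after testing the equation against $u_\omega$ and applying the Gagliardo-Nirenberg-Sobolev inequality on the whole of $\G$ (with constant $C_{p,\G}$), satisfy an inequality that, by the very definition of $a_{*,0}$, is incompatible with $a < a_{*,0}$. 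Hence case (i) necessarily occurs for every $\ell > \ell^*(0,a)$, as claimed. Note that here the Gagliardo-Nirenberg-Sobolev inequality is applied on $\G$, not on $\K_\ell$, so the constant $a_{*,0}$ is independent of $\ell$, which is why the conclusion is uniform in $\ell$.

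The hardest step will be the construction of the test spinors $\phi_\ell$ producing $a^*_{\omega_*}(\ell) \to 0$. Since $\sigma(\D) = (-\infty,-mc^2]\cup[mc^2,+\infty)$ has a full spectral gap and no Fourier transform is available on a general metric graph, one cannot simply insert a bound state or a plane-wave ansatz; the quasi-modes must be built by hand, and the delicate bookkeeping is to make $\norm{\phi_\ell}_p$ grow sufficiently fast relative to $\mathcal{Q}_\D(\phi_\ell)$ uniformly in $\ell$, simultaneously accommodating the $\omega_*$-shift for arbitrary $\omega_* \in (-mc^2, mc^2)$. A subsidiary but non-trivial issue is matching the Kirchhoff-type conditions of Definition \ref{defD} at the vertex where $\mathcal{H}$ meets $\K$, which for two-component spinors is genuinely more restrictive than in the scalar setting \eqref{eqsloc} and must be controlled when gluing in the translated copies along $\mathcal{H}_\ell$.
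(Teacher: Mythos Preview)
Your overall architecture matches the paper's: run the perturbation/reduction scheme with $\chi_\ell$ in place of $\chi_\K$, reduce the dichotomy to the estimate $c_{s,\infty}<\frac{m-s}{2}$ for $s\in[\omega_*,m)$ (equivalently $a^*_{\omega_*}(\ell)\to0$), and dispose of case~(ii) via Lemma~\ref{lempgeq4}. The gap is in your test--spinor construction.

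Gluing $\sim\ell$ disjoint translated copies of a fixed truncated quasi--mode does \emph{not} drive the mountain--pass level below $\frac{m-s}{2}$. If every relevant quantity grows linearly in~$\ell$, then after $L^2$--normalisation the spectral excess $\dfrac{\norm{\phi_\ell^+}^2}{\norm{\phi_\ell^+}_2^2}-m$ stays at a fixed positive constant (each copy contributes the same $\delta>0$), while the nonlinear term $\dfrac{\int_{\K\cup\mathcal H_\ell}\abs{\phi_\ell^+}^p}{\norm{\phi_\ell^+}_2^p}$ scales like $\ell^{1-p/2}\to0$. So the upper bound you obtain for $J_s(\phi_\ell^+/\norm{\phi_\ell^+}_2)$ is $\frac{m-s}{2}+\text{const}-o(1)$, which is useless. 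The paper's device is instead a \emph{single scaled} bump $\varphi_b=(\varphi^1(b\,\cdot),0)^T$ supported on $\mathcal H$ with $\varphi^1\in C_c^\infty(\mathcal H)$: for $\ell$ large enough the stretched support fits in $\mathcal H_\ell$, and as $b\to0$ one has
\[
\frac{\norm{\varphi_b^+}^2-m\norm{\varphi_b^+}_2^2}{\norm{\varphi_b^+}_2^2}=O(b^2),
\qquad
\frac{\int_{\K\cup\mathcal H_\ell}\abs{\varphi_b^+}^p}{\norm{\varphi_b^+}_2^p}\sim b^{\,p/2-1}.
\]
Since $p<6$ gives $p/2-1<2$, the nonlinear term dominates and $J_s(\varphi_b^+/\norm{\varphi_b^+}_2)<\frac{m-s}{2}$ for $b$ small. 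There is nothing to match at the vertex $v_{\mathcal H}$: the first component of $\varphi_b$ vanishes there (because $\varphi^1\in C_c^\infty$) and the second component is identically zero, so \eqref{eqdefdom1}--\eqref{eqdefdom2} are automatic.

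For the ``moreover'' clause, be aware that Lemma~\ref{lempgeq4} needs, in addition to $\norm{u_0}_2^2<1$ and $\D u_0=a\chi_\ell\abs{u_0}^{p-2}u_0$, the energy bound $\frac12(\D u_0,u_0)_2-\Psi_\ell(u_0)\le\frac{m}{2}$. This does \emph{not} come from case~(ii) of Theorem~\ref{th4} as stated; it is supplied by the $\chi_\ell$--version of Theorem~\ref{th31}, which yields $c_\infty<\frac{m}{2}$ once you have the test--function estimate above at $s=0$. Your sketch glosses over this point.
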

Let the vertex incident to the edge $\mathcal{H}$ in $\G$ be denoted by $v_\mathcal{H}$. To construct the desired graph $\G_\ell$, we add  a new vertex $v_\ell$ to $\G$ at $x_{\mathcal{H}} = \ell$ on $\mathcal{H}$, where we  identify the edge $\mathcal{H}$ with $[0,+\infty)$. The new edge $v_\mathcal{H}v_\ell$ is identified with the internal $[0,\ell]$, and $x_{v_\mathcal{H}v_\ell}$ is equal to $0$ at $v_\mathcal{H}$. If $u$ is a solution of equation
\begin{equation}\label{eqell}
    \D u - \omega u= a\chi_\ell\abs{u}^{p-2}u
\end{equation}
on $\G$, then we can construct a solution $u_\ell:\G_\ell \to \mathbb{C}^2$ of equation \eqref{eq1} on $\G_\ell$.  Denote the set of edges of $\G_\ell$ by $\mathrm{E}_\ell$, the set of vertices of $\G_\ell$ by $\mathrm{V}_\ell$, the compact core of $\G_\ell$ by $\K_\ell$, and Dirac operator on $\G_\ell$ by $\D_\ell$ (as defined in Definition \ref{defD}). The following corollary is a direct consequence of Theorem \ref{th4} for the normalized solutions of (NLDE) on $G_\ell$.

\begin{corollary}\label{co1}
    Under the assumptions of Theorem \ref{th4}, then, for all $\omega_*\in(-mc^2, mc^2)$, $a > 0$ and $\ell>\ell^*(\omega_*,a)$, the following holds:\\
    \begin{enumerate}[label=\rm(\roman*)]
        \item either there exists a non-trivial $u \in \operatorname{dom}(\D_\ell)$ (as defined in Definition \ref{defD}) and $\omega \in [\omega_*, mc^2)$ such that
    \begin{equation*}
    \left\{\begin{aligned}
        &\D_{\ell,e} u_e - \omega u_e= a\chi_{\K_\ell}\abs{u_e}^{p-2}u_e \quad \forall e\in \mathrm{E}_\ell,\\
        &\int_{\G_\ell}\abs{u}^2\,dx = 1.
    \end{aligned}
    \right.
    \end{equation*}
    \item  or for all $\omega \in [\omega_*, mc^2)$, there exists a non-trivial $u_\omega \in \operatorname{dom}(\D_\ell)$ such that
    \begin{equation*}
    \left\{\begin{aligned}
        &\D_{\ell,e} u_{\omega,e} - \omega u_{\omega,e} = a\chi_{\K_\ell}\abs{u_{\omega,e}}^{p-2}u_{\omega,e} \quad\forall e\in \mathrm{E}_\ell,\\
        &\int_{\G_\ell}\abs{u_\omega}^2\,dx < 1.
    \end{aligned}
    \right.
    \end{equation*}
    \end{enumerate}
Moreover, if $0<a<a_{*,0}$, then for all $\ell>\ell^*(0,a)$, the first case (for $\omega_* =0$) occurs, that is, there exists a non-trivial $u \in \operatorname{dom}(\D_\ell)$ and $\omega \in [0, mc^2)$ such that
    \begin{equation*}
    \left\{\begin{aligned}
        &\D_{\ell,e} u_e - \omega u_e= a\chi_{\K_\ell}\abs{u_e}^{p-2}u_e \quad \forall e\in \mathrm{E}_\ell,\\
        &\int_{\G_\ell}\abs{u}^2\,dx = 1.
    \end{aligned}
    \right.
    \end{equation*}
\end{corollary}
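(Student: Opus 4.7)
The plan is to derive Corollary \ref{co1} as an essentially formal consequence of Theorem \ref{th4} via a graph-surgery argument. Theorem \ref{th4} produces, under the relevant alternative, a spinor on $\G$ that solves the Dirac equation with nonlinearity localized on $\chi_\ell = \chi_\K + \chi_{\mathcal{H}_\ell}$. The key observation is that inserting a new vertex $v_\ell$ at the point $x_\mathcal{H}=\ell$ of the half-line $\mathcal{H}$ splits $\mathcal{H}$ into a bounded edge $v_\mathcal{H} v_\ell \simeq [0,\ell]$ and a new half-line issuing from $v_\ell$; the resulting graph is exactly $\G_\ell$, and its compact core is $\K_\ell = \K \cup [0,\ell]$. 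Under this identification, the support $\chi_\ell$ on $\G$ coincides with $\chi_{\K_\ell}$ on $\G_\ell$, so a solution on $\G$ of \eqref{eqell} transports to a solution on $\G_\ell$ of the equation in Corollary \ref{co1}, with the same frequency $\omega$ and the same $L^2$ mass, provided the Dirac-operator domain is respected at the new vertex.

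In detail, for given $\omega_*\in(-mc^2,mc^2)$ and $a>0$, I first apply Theorem \ref{th4} to $\G$ to obtain the threshold $\ell^*(\omega_*,a)$ and, for every $\ell > \ell^*(\omega_*,a)$, a spinor $u\in\operatorname{dom}(\D)$ and a frequency $\omega \in [\omega_*,mc^2)$ solving
\[
\D u - \omega u = a\chi_\ell |u|^{p-2} u \quad \text{on } \G,
\]
together with the appropriate mass condition (equal to $1$ in alternative (i), strictly less than $1$ in alternative (ii)). I then define $u_\ell:\G_\ell\to\mathbb{C}^2$ by setting $u_\ell=u$ on every edge of $\G_\ell$ inherited from $\G$, and by defining $u_\ell$ on each of the two new edges to be the restriction of $u_\mathcal{H}$ to the corresponding sub-interval, namely $[0,\ell]$ and $[\ell,+\infty)$ under the identification $\mathcal{H}\simeq[0,+\infty)$.

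The only step that is not a purely formal rewriting is verifying that $u_\ell\in\operatorname{dom}(\D_\ell)$. At every vertex of $\G_\ell$ inherited from $\G$, the Kirchhoff conditions \eqref{eqdefdom1}--\eqref{eqdefdom2} are exactly those already satisfied by $u$ on $\G$, and are therefore inherited by $u_\ell$. At the new vertex $v_\ell$ precisely two edges meet, and since on $\G$ the point $x_\mathcal{H}=\ell$ lies in the interior of the edge $\mathcal{H}$, both components of $u_\mathcal{H}$ are continuous there by virtue of $u_\mathcal{H}\in H^1(\mathcal{H},\mathbb{C}^2)$. Continuity of the first component is precisely \eqref{eqdefdom1} at $v_\ell$; moreover, because the two new edges meet at $v_\ell$ with opposite orientations ($x=\ell$ on the bounded edge, $x=0$ on the new half-line), the sign convention in \eqref{eqdefdom2} reduces the second-component Kirchhoff condition at $v_\ell$ to continuity of $u_\mathcal{H}^2$ at $\ell$, which again holds. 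Hence $u_\ell\in\operatorname{dom}(\D_\ell)$, it satisfies $\D_\ell u_\ell-\omega u_\ell = a\chi_{\K_\ell}|u_\ell|^{p-2}u_\ell$ on $\G_\ell$, and $\int_{\G_\ell}|u_\ell|^2\,dx = \int_\G|u|^2\,dx$. Applying this surgery in both alternatives of Theorem \ref{th4} gives the two alternatives of the corollary, and the \emph{moreover} clause is inherited verbatim from the corresponding clause of Theorem \ref{th4}. I expect no obstacle beyond the Kirchhoff check at $v_\ell$, which is a one-line continuity argument at an interior point of an edge; everything else is bookkeeping under the identification $\K_\ell\leftrightarrow\K\cup\mathcal{H}_\ell$.
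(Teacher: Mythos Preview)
Your proposal is correct and follows essentially the same approach as the paper: transport a solution $u\in\operatorname{dom}(\D)$ of \eqref{eqell} on $\G$ to a solution $u_\ell\in\operatorname{dom}(\D_\ell)$ on $\G_\ell$ by splitting $u_\mathcal{H}$ into its restrictions to $[0,\ell]$ and to the shifted half-line, noting that the Kirchhoff conditions at the new vertex $v_\ell$ reduce to continuity of $u_\mathcal{H}$ at $x=\ell$, and that the $L^2$ mass and the equation (with $\chi_\ell$ becoming $\chi_{\K_\ell}$) are preserved. The paper's proof is terser on the vertex check but otherwise identical in structure.
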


\begin{remark}
     It is clear that, for any weak solution $u \in \operatorname{dom}(\D)$ of equation \eqref{eq1}, we have $u_e \in C^1(I_e,\mathbb{C}^{2})$ for all $e \in \mathrm{E}$ (see, e.g., \cite[Remark 6 in Chapter 8]{Ha}). However, this may not hold for weak solutions $u \in \operatorname{dom}(\D)$ of equation \eqref{eqell} with $\ell > 0$.
\end{remark}

The rest of the paper is organized as follows. In Section \ref{sect2}, we present a Gagliardo-Nirenberg-Sobolev inequality on metric graphs, introduce the perturbation functional and provide some useful lemmas. In Section \ref{sect3}, we give the proof of Theorem \ref{th1}. In Section \ref{sect4}, we establish a non-existence result for $2<p<4$, and then prove Theorem \ref{th2}. In Section \ref{sectalla}, we give some definitions in graph theory,  prove Theorem \ref{thalla}, and derive a spectrum result. In Section \ref{sect5}, we prove Theorem \ref{th3}. In Section \ref{sect6}, we establish a non-existence result for $p \geq 4$, and then prove Theorem \ref{th4} and Corollary \ref{co1}. In Section \ref{sectnegative}, we extend these results to the case $a < 0$, provide the topological assumption on $\G$ in Remark \ref{remarknegative} under which $\lambda = -mc^2$ is an eigenvalue of the operator $\D$, and prove Theorem \ref{th-a3}. In the Appendix \ref{appa}, we study the influence of the parameters $m, c > 0$ on the existence of normalized solutions to (NLDE).
\section{Preliminaries}\label{sect2}
In this section, we establish a Gagliardo-Nirenberg-Sobolev inequality for metric graphs, introduce the perturbation functional, and provide some fundamental results that will be essential for proving our main theorems.
\subsection{Gagliardo-Nirenberg-Sobolev inequality}
The following lemma is an immediate corollary of $Y \subset H^{\frac{1}{2}}(\G,\mathbb{C}^2)$ and the Gagliardo-Nirenberg-Sobolev inequality \cite[Theorem 1.1]{Ha2}.

\begin{lemma}\label{lemgnsg}
Let $p \geq 2$, there exist constants $C_{p,\K}$ and $C_{p,\G}$ that depend only on $c$, $m$, $p$ and $\G$ such that
\[
    \int_{\K} \abs{u}^p\,dx \leq C_{p,\K} \norm{u}^{p-2}\norm{u}_2^{2}, \quad \forall u \in Y,
\]
and
\[
 \int_{\K} \abs{u}^p\,dx \leq \int_{\G} \abs{u}^p\,dx \leq C_{p,\G} \norm{u}^{p-2}\norm{u}_2^{2}, \quad \forall u \in Y.
\]
\end{lemma}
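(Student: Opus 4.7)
The plan is to reduce the graph estimate to the one-dimensional Gagliardo--Nirenberg--Sobolev inequality on each edge, then sum over edges and finally translate the resulting $H^{1/2}$-norm into the intrinsic $Y$-norm. From \cite[Theorem~1.1]{Ha2}, for every $p\geq 2$ there is a constant $C_p$ such that
\begin{equation*}
\norm{v}_{L^p(I,\mathbb{C}^2)}^{p}\leq C_p\,\norm{v}_{H^{1/2}(I,\mathbb{C}^2)}^{p-2}\,\norm{v}_{L^2(I,\mathbb{C}^2)}^{2}\qquad\forall v\in H^{1/2}(I,\mathbb{C}^2),
\end{equation*}
whether $I$ is a bounded interval or a half-line; the exponents $p-2$ and $2$ are the only ones compatible with the one-dimensional scaling $v_\lambda(x)=v(\lambda x)$, once one notices that the homogeneous $H^{1/2}$-seminorm is dimensionless in one variable, while $\norm{v_\lambda}_{L^q}^q$ scales like $\lambda^{-1}$.

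Next I sum the edge estimates. For every finite family of nonnegative reals $(a_e)$ and every $p\geq 2$ one has the pointwise bound $a_e^{p-2}\leq\bigl(\sum_{f}a_f^{2}\bigr)^{(p-2)/2}$, simply because $a_e^{2}\leq\sum_f a_f^{2}$. Applying this with $a_e=\norm{u_e}_{H^{1/2}(I_e,\mathbb{C}^2)}$ and the edgewise GNS above gives
\begin{equation*}
\norm{u}_{L^p(\G,\mathbb{C}^2)}^{p}=\sum_{e\in\mathrm{E}}\norm{u_e}_{L^p(I_e,\mathbb{C}^2)}^{p}\leq C_p\Bigl(\sum_e\norm{u_e}_{H^{1/2}(I_e,\mathbb{C}^2)}^{2}\Bigr)^{\!(p-2)/2}\sum_e\norm{u_e}_{L^2(I_e,\mathbb{C}^2)}^{2},
\end{equation*}
which is exactly $C_p\,\norm{u}_{H^{1/2}(\G,\mathbb{C}^2)}^{p-2}\,\norm{u}_{L^2(\G,\mathbb{C}^2)}^{2}$. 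Restricting the sum to the finitely many edges contained in the compact core $\K$ yields the same bound with $\int_{\K}\abs{u}^{p}\,dx$ on the left; alternatively, one can simply use $\int_{\K}\abs{u}^{p}\,dx\leq\int_{\G}\abs{u}^{p}\,dx$.

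It remains to pass from $\norm{\cdot}_{H^{1/2}(\G,\mathbb{C}^2)}$ to the intrinsic norm $\norm{u}=\norm{\abs{\D}^{1/2}u}_{2}$. As recalled in the preliminaries, $Y=\operatorname{dom}(\mathcal{Q}_\D)$ is a Hilbert space for $\norm{\cdot}$, and it is a closed subspace of $H^{1/2}(\G,\mathbb{C}^2)$ in the $H^{1/2}$-topology; by the open mapping theorem the two norms are equivalent on $Y$, so there exists $C'=C'(m,c,\G)$ with $\norm{u}_{H^{1/2}(\G,\mathbb{C}^2)}\leq C'\norm{u}$ for every $u\in Y$. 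Plugging this into the previous display produces the two stated inequalities with $C_{p,\K}=C_{p,\G}=C_p\,(C')^{p-2}$, depending only on $c,m,p,\G$.

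The only mildly delicate point is this last norm equivalence: the bound $\norm{u}^{2}\geq mc^{2}\norm{u}_2^{2}$ used elsewhere in the paper controls only the $L^2$-part, whereas the claim is about the full $H^{1/2}$-norm; one uses that $Y$ is closed in $H^{1/2}$ to invoke the open mapping theorem, absorbing the dependence on $m,c,\G$ into $C'$. Everything else---the one-dimensional GNS with exponents $(p-2,2)$ and the elementary pointwise bookkeeping needed to resum over the edges---is routine.
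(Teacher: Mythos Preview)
Your proof is correct and follows exactly the route the paper indicates: the paper merely states that the lemma is an immediate corollary of $Y\subset H^{1/2}(\G,\mathbb{C}^2)$ together with the Gagliardo--Nirenberg--Sobolev inequality of \cite[Theorem~1.1]{Ha2}, and you have simply supplied the edge-by-edge details of that corollary. The only minor inefficiency is the appeal to the open mapping theorem---since only the one-sided bound $\norm{u}_{H^{1/2}}\leq C'\norm{u}$ is required, the continuous embedding $Y\hookrightarrow H^{1/2}(\G,\mathbb{C}^2)$ already recalled in the preliminaries suffices directly, without establishing full norm equivalence.
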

\subsection{The Perturbation Functional}\label{subsecper} For the remainder of the paper, we set $c = 1$, as its value does not affect the results.

We define the energy functional $I_{\omega}: Y \to \mathbb{R}$ associated with equation \eqref{eq1} by
$$
I_{\omega}(u) := \frac{1}{2}\norm{u^{+}}^{2}-\frac{1}{2}\norm{u^{-}}^{2}-\frac{\omega}{2} \int_{\G}\abs{u}^{2} \,dx-\Psi(u)
$$
for $u=(u^{+}+u^{-}) \in Y$, where
$$
\Psi(u) := \frac{a}{p} \int_{\K}|u|^{p}\,dx.
$$
It follows from standard arguments that $I_{\omega} \in C^{2}(Y, \mathbb{R})$. Moreover, by \cite[Proposition 3.1]{Bo}, if $u \in Y$ is a critical point of $I_{\omega}$, then $u$ is a solution of problem \eqref{eq1}, and belonging to $\operatorname{dom}(\D) \subset H^1(\G, \mathbb{C}^2)$. Thus, $u_e \in C^1(I_e,\mathbb{C}^{2})$ for all $e \in \mathrm{E}$ (see, e.g., \cite[Remark 6 in Chapter 8]{Ha}).

Define $J: Y^{+} \rightarrow \mathbb{R}$ by
$$
J(v)=I_0(v+h(v))=\max \{I_{0}(v+w): w \in Y^{-}\},
$$
where $h \in C^1\left(Y^{+}, Y^{-}\right)$ and the critical points of $J$ and $I_0$ are in one-to-one correspondence via the injective map $u \rightarrow u+h(u)$ from $Y^{+}$ into $Y$, as shown in \cite{Ding,DingX,Ac}.

For any $\mu>0$, we consider operator $T_{\mu}: Y \rightarrow L^{2}(\G, \mathbb{C}^{2})$ defined by
$$
T_{\mu} u := (I+\mu\abs{\D}) u=u+\mu \D(u^{+}-u^{-}).
$$
Next, we define the following perturbation functional
$$
I_{r, \mu}(u) := \frac{1}{2}\norm{u^{+}}^{2}-\frac{1}{2}\norm{u^{-}}^{2}-\Psi(u)-H_{r, \mu}(u),\; u \in U_{\mu},
$$
where $U_{\mu} := \{u \in Y: (T_{\mu} u, u)_{2}<1\}$, and $H_{r, \mu}(u)$ is a penalization term defined by
$$
H_{r, \mu}(u) := f_{r}((T_{\mu} u, u)_{2}) \text { with } f_{r}(s) := \frac{s^{r}}{1-s} \; (0 \leq s<1),
$$
and $r>1$ is a parameter that will be chosen large enough.

The following are some properties of the penalization $H_{r, \mu}(u)$ for any $u \in U_{\mu}$ and $v, v_{1}, v_{2} \in Y$,
$$
\langle H_{r, \mu}^{\prime}(u), v\rangle=2 f_{r}^{\prime}((T_{\mu} u, u)_{2})(T_{\mu} u, v)_{2}
$$
with
$$
f_{r}^{\prime}(s)=\frac{r s^{r-1}}{1-s}+\frac{s^{r}}{(1-s)^{2}}>\frac{r}{s} f_{r}(s)>0 \;(0<s<1),
$$
and
\begin{equation*}
\langle H_{r, \mu}^{\prime \prime}(u) v_{1}, v_{2}\rangle=2 f_{r}^{\prime}((T_{\mu} u, u)_{2})(T_{\mu} v_{1}, v_{2})_{2}+4 f_{r}^{\prime \prime}((T_{\mu} u, u)_{2})(T_{\mu} u, v_{1})_{2}(T_{\mu} u, v_{2})_{2}
\end{equation*}
with
$$
f_{r}^{\prime \prime}(s)=\frac{r(r-1) s^{r-2}}{1-s}+\frac{2 r s^{r-1}}{(1-s)^{2}}+\frac{2 s^{r}}{(1-s)^{3}}>0 \;(0<s<1).
$$
In particular, $H_{r, \mu}(u)$ is convex in $u$ and
$$
\langle H_{r, \mu}^{\prime}(u), u\rangle \geq 2 r H_{r, \mu}(u),\,\,\text{for all}\,\,u \in U_{\mu}.
$$

Following the arguments in \cite[Section 2.3]{Ding}, we can apply a variational reduction of $I_{r, \mu}$, which leads to a $C^1$ reduction map $h_{r, \mu}: Y^{+} \cap U_{\mu} \rightarrow Y$ and a reduced functional $J_{r, \mu}: Y^{+} \cap U_{\mu} \rightarrow \mathbb{R}$ satisfying
\begin{equation*}
J_{r, \mu}(v) := I_{r, \mu}(v+h_{r, \mu}(v))=\max \{I_{r, \mu}(v+w): w \in Y^{-},(T_{\mu}(v+w), v+w)_{2}<1\}.
\end{equation*}
The following Lemma shows that the functional $J_{r, \mu}$ possesses the mountain pass geometry.
\begin{lemma}
The functional $J_{r, \mu}$ possesses the following properties:
  \begin{enumerate}[label=\rm(\roman*)]
\item There exist $\alpha, \rho>0$ such that $J_{r, \mu}(v) \geq \alpha$ for $v \in Y^{+} \cap U_{\mu}$ with $\|v\|=\rho$;
\item There exists  $e \in Y^{+} \cap U_{\mu}$ with $\|e\|>\rho$ such that $J_{r, \mu}(e)<0$.
\end{enumerate}
\end{lemma}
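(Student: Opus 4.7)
\emph{Sketch.} The plan is to control $J_{r,\mu}$ in two complementary regimes: near the origin the penalty $H_{r,\mu}$ is of order $\norm{v}^{2r}$ so that the quadratic term $\tfrac{1}{2}\norm{v}^2$ dominates, whereas as one approaches the boundary $\partial U_\mu$ the penalty blows up \emph{uniformly} in the admissible component $w\in Y^{-}$, driving $J_{r,\mu}$ to $-\infty$.

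For part (i), since $0\in Y^{-}$ is admissible in the variational reduction whenever $v\in Y^{+}\cap U_\mu$, one has $J_{r,\mu}(v)\geq I_{r,\mu}(v)=\tfrac{1}{2}\norm{v}^2-\Psi(v)-H_{r,\mu}(v)$. Lemma~\ref{lemgnsg} together with the spectral estimate $m\norm{v}_2^2\leq \norm{v}^2$ yields $\Psi(v)\leq C_1\norm{v}^p$. Using the $L^2$-orthogonality of $Y^{\pm}$ and the identities $|\D|u^{\pm}=\pm\D u^{\pm}$ on $Y^{\pm}$, one computes $(T_\mu v,v)_2=\norm{v}_2^2+\mu\norm{v}^2\leq (m^{-1}+\mu)\norm{v}^2$ for $v\in Y^{+}$. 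Combined with the elementary inequality $f_r(s)\leq 2s^r$ on $[0,\tfrac{1}{2}]$, this gives $H_{r,\mu}(v)\leq C_2\norm{v}^{2r}$ for $\norm{v}$ small. Since $p>2$ and $2r>2$, the resulting estimate $J_{r,\mu}(v)\geq \tfrac{1}{2}\norm{v}^2-C_1\norm{v}^p-C_2\norm{v}^{2r}$ is bounded below by some $\alpha>0$ on a sphere $\norm{v}=\rho$ with $\rho>0$ sufficiently small.

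For part (ii), fix any $v_0\in Y^{+}\setminus\{0\}$ and set $t_{\max}:=(\norm{v_0}_2^2+\mu\norm{v_0}^2)^{-1/2}$, so that $tv_0\in Y^{+}\cap U_\mu$ iff $t\in(0,t_{\max})$. The same spectral bookkeeping gives, for every $w\in Y^{-}$,
\[
(T_\mu(tv_0+w),tv_0+w)_2=t^2(\norm{v_0}_2^2+\mu\norm{v_0}^2)+\norm{w}_2^2+\mu\norm{w}^2\geq t^2(\norm{v_0}_2^2+\mu\norm{v_0}^2).
\]
Since $f_r$ is increasing on $[0,1)$ and $\Psi,\tfrac{1}{2}\norm{w}^2\geq 0$, this produces the $w$-uniform upper bound
\[
I_{r,\mu}(tv_0+w)\leq \tfrac{t^2}{2}\norm{v_0}^2-f_r\!\bigl(t^2(\norm{v_0}_2^2+\mu\norm{v_0}^2)\bigr),
\]
and hence the same bound for $J_{r,\mu}(tv_0)$. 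As $t\nearrow t_{\max}$ the first term stays bounded while $f_r$ diverges, so $J_{r,\mu}(tv_0)\to -\infty$ and some $t^{*}\in(0,t_{\max})$ yields $e:=t^{*}v_0$ with $J_{r,\mu}(e)<0$. The condition $\norm{e}>\rho$ is then arranged by choosing the radius $\rho$ from step (i) smaller than the positive constant $M(v_0):=\norm{v_0}/\sqrt{\norm{v_0}_2^2+\mu\norm{v_0}^2}$, which is compatible with the smallness required for positivity on the sphere.

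The only non-routine step is the spectral identity for $(T_\mu u,u)_2$: it is precisely the clean separation into $Y^{+}$ and $Y^{-}$ components that makes the penalty $H_{r,\mu}$ provide a lower bound uniform in $w\in Y^{-}$, and hence makes the blow-up argument work; once this is in hand, the rest is an elementary balancing between the polynomial terms $\norm{v}^2,\norm{v}^p,\norm{v}^{2r}$ and the divergence of $f_r$ at $s=1$.
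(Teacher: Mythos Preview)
Your argument is correct and follows the same route as the paper: in (i) you use $J_{r,\mu}(v)\geq I_{r,\mu}(v)$ together with the Sobolev/Gagliardo--Nirenberg bound $\Psi(v)\leq C\norm{v}^p$ and the estimate $(T_\mu v,v)_2\leq(m^{-1}+\mu)\norm{v}^2$ to reduce to the polynomial balance $\tfrac12\rho^2-C_1\rho^p-C_2\rho^{2r}$, exactly as the paper does; in (ii) you take a ray $tv_0$ approaching $\partial U_\mu$ and use the blow-up of $f_r$ at $s=1$, which is again the paper's idea. The only difference is cosmetic: the paper normalises $(T_\mu v_0,v_0)_2=1$ and simply asserts $\lim_{t\to1^-}J_{r,\mu}(tv_0)=-\infty$, whereas you spell out the $w$-uniform bound $(T_\mu(tv_0+w),tv_0+w)_2\geq t^2(T_\mu v_0,v_0)_2$ that makes this work---a detail the paper leaves implicit.
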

\begin{proof}
(i) For any $v \in Y^{+} \cap U_{\mu}$ with $\|v\|=\rho$, where $\rho$ is small enough, we have
$$
(T_{\mu} v, v)_{2} \leq \frac{1+m \mu}{a}\|v\|^{2}=\frac{1+m \mu}{a} \rho^{2}<1.
$$
Thus, by the definition of $J_{r, \mu}$, the monotonicity of $f_{r}(s)$ with respect to $s$, and the Sobolev inequality, we obtain
$$
\begin{aligned}
J_{r, \mu}(v) \geq I_{r, \mu}(v) & =\frac{1}{2}\|v\|^{2}-\frac{a}{p} \int_{\K}|v|^{p}\,dx-f_{r}((T_{\mu} v, v)_{2}) \\
& \geq \frac{1}{2}\|v\|^{2}-\frac{a}{p} \int_{\K}|v|^{p} \,dx-f_{r}\left(\frac{1+m \mu}{a}\|v\|^{2}\right) \\
& \geq \rho^{2}\left(\frac{1}{2}-C \rho^{p-2}-\frac{(\frac{1+m \mu}{a})^{r} \rho^{2 r-2}}{1-\frac{1+m \mu}{a} \rho^{2}}\right).
\end{aligned}
$$
Since $p>2$ and $r>1$,  we can choose $\rho>0$ small enough such that
$$
J_{r, \mu}(v) \geq \alpha \,\,\text {for }\,\, v \in Y^{+} \cap U_{\mu} \text { with }\|v\|=\rho.
$$
(ii) Choosing $v_{0} \in Y^{+}$ with $(T_{\mu} v_{0}, v_{0})_{2}=1$, it is easy to verify that
$$
\lim _{t \rightarrow 1^{-}} J_{r, \mu}(t v_{0})=-\infty.
$$
Therefore, there exists $t_{0}<1$ such that $\norm{t_{0} v_{0}}>\rho$ and $J_{r, \mu}(t_{0} v_{0})<0$, then (ii) holds.
\end{proof}
As a consequence, we define the minimax value
$$
c_{r, \mu}:=\inf _{\gamma \in \Gamma_{r, \mu}} \max _{t \in[0,1]} J_{r, \mu}(\gamma(t))>0,
$$
where
$$
\Gamma_{r, \mu} := \{\gamma \in C([0,1],\, Y^{+} \cap U_{\mu}): \gamma(0)=0,\, J_{r, \mu}(\gamma(1))<0\}.
$$
Note that the functional $J_{r, \mu}$ is defined on the bounded set
$$
\{v \in Y^{+}:(T_{\mu} v, v)_{2}<1\} \subset\{v \in Y^{+}:\|v\|<\mu^{-\frac{1}{2}}\}.
$$
Similarly, $I_{r, \mu}$ is also defined on a bounded domain $U_{\mu}$. If $r_{1} \leq r_{2}$ and $\mu_{1} \leq \mu_{2}$, we have $c_{r_{1}, \mu_{2}} \leq c_{r_{2}, \mu_{1}}$ because $J_{r_{1}, \mu_{2}} \leq J_{r_{2}, \mu_{1}}$ on $U_{\mu_{2}}$.

Define $c_{\infty}:=\sup _{r>1, \mu>0} c_{r, \mu}$. For all $v \in Y^{+} \backslash\{0\}$, we have
\begin{equation}\label{eqcssup}
    c_{\infty} \leq \sup _{0 \leq t<|v|_{2}^{-1}} J(t v).
\end{equation}
We now obtain an important upper bound estimates for $c_\infty$. From now on, for simplicity, we will denote by $C$ or $C_i$(where $i=1,2,...)$ generic positive constants, whose the exact value may vary from line to line or even within the same line.

\begin{lemma}\label{lemcin}
    Let $2<p<4$. Then $c_\infty < \frac{m}{2}$.
\end{lemma}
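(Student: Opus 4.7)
The plan is to use \eqref{eqcssup}, namely $c_\infty \leq \sup_{0 \leq t < \|v\|_2^{-1}} J(tv)$ for any $v \in Y^+ \setminus \{0\}$, applied to a carefully constructed test spinor. After normalizing $\|v\|_2 = 1$, the task reduces to exhibiting a single $v \in Y^+$ with $\sup_{t \in [0,1)} J(tv) < \tfrac{m}{2}$. Writing
\[
J(tv) = \tfrac{t^2}{2}\|v\|^2 - \tfrac{1}{2}\|h(tv)\|^2 - \tfrac{a}{p}\int_\K |tv + h(tv)|^p\,dx
\]
and using $\|v\|^2 \geq m\|v\|_2^2 = m$ from \eqref{eqsp}, the naive quadratic bound $J(tv) \leq \tfrac{t^2}{2}\|v\|^2$ never beats $\tfrac{m}{2}$; the strict gap has to come from combining $\|v\|^2$ close to $m$ with a strictly positive contribution of the localized nonlinearity.

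Since, as the authors emphasize, Fourier methods are not available on $\G$, the test function is built by hand. Fix a smooth profile $\chi \in C^\infty([0,1])$ with $\chi(0)=1$, $\chi(1)=0$, and $\|\chi\|_{L^2([0,1])}^2 = 1$. For $R \gg 1$, define a spinor $\tilde v_R$ on $\G$ by: on every bounded edge of $\K$ set $\tilde v_R \equiv (1,0)^T$; on each half-line $\mathcal{H}_i$ of $\G$ identified with $[0,+\infty)$ (starting at $\mathrm{v}_i \in \K$) set $\tilde v_R(x) := \chi(x/R)(1,0)^T$ for $x \in [0,R]$ and $0$ for $x \geq R$. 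The continuity condition \eqref{eqdefdom1} holds at every vertex (the first component is $1$ on $\K$, and $\chi(0) = 1$ at each $\mathrm{v}_i$), and \eqref{eqdefdom2} is automatic because the second component vanishes. Using $\sigma_3(1,0)^T = (1,0)^T$ and $\sigma_1(1,0)^T = (0,1)^T$, direct computation gives
\[
(\D\tilde v_R, \tilde v_R)_2 = m\|\tilde v_R\|_2^2, \qquad \|\D\tilde v_R\|_2^2 = m^2\|\tilde v_R\|_2^2 + O(R^{-1}),
\]
where $\|\tilde v_R\|_2^2 = |\K| + NR\|\chi\|_{L^2}^2 \sim R$ (with $N$ the number of half-lines of $\G$), and the $O(R^{-1})$ term comes from $\|\tilde v_R'\|_2^2 = NR^{-1}\|\chi'\|_{L^2}^2$. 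A Cauchy--Schwarz-type estimate then yields $\|v_R\|^2 = m + O(R^{-2})$ for $v_R := \tilde v_R/\|\tilde v_R\|_2$; projecting gives $\|v_R^-\|^2 = O(R^{-2})$ (from $\|v_R^+\|^2 - \|v_R^-\|^2 = m$ and $\|v_R^+\|^2 + \|v_R^-\|^2 = m + O(R^{-2})$), and $w_R := v_R^+/\|v_R^+\|_2$ satisfies $\|w_R\|_2 = 1$ and $\|w_R\|^2 = m + O(R^{-2})$.

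The decisive scaling asymmetry is the following: since $v_R$ is identically $\|\tilde v_R\|_2^{-1}(1,0)^T \sim R^{-1/2}(1,0)^T$ on all of $\K$ (a set of fixed positive measure), one has $\int_\K |v_R|^p\,dx = |\K|\,\|\tilde v_R\|_2^{-p} \sim R^{-p/2}$. Because $Y \hookrightarrow L^p(\K)$ continuously and $\|v_R^-\|^2 = O(R^{-2})$, the projection error has $\|v_R^-\|_{L^p(\K)} = O(R^{-1})$, which is negligible compared to $R^{-1/2}$; the reverse triangle inequality in $L^p$ gives $\int_\K |w_R|^p \geq c\,R^{-p/2}$ for $R$ large. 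For $p \in (2,4)$ the exponent $-p/2 \in (-2,-1)$ is strictly above $-2$, so this nonlinear mass decays strictly slower than the spectral error $\|w_R\|^2 - m = O(R^{-2})$. A standard perturbation argument (based on $h \in C^1$ with $h(0) = 0$ and the vanishing of the nonlinear coupling as $R \to \infty$) shows $\|h(tw_R)\| \to 0$ uniformly in $t \in [0,1]$, and a direct derivative computation shows $t \mapsto J(tw_R)$ is monotone increasing on $[0,1]$ for $R$ large. Hence
\[
\sup_{t \in [0,1)} J(tw_R) \leq J(w_R) \leq \tfrac{m}{2} + C_1 R^{-2} - C_2 R^{-p/2},
\]
which is strictly less than $\tfrac{m}{2}$ for $R$ large enough, yielding $c_\infty < \tfrac{m}{2}$.

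The main obstacle is the uniform control of the reduction map $h$ up to the endpoint $t = 1^-$: as $t$ approaches the boundary of the admissible interval, the constraint $t^2\|w_R\|_2^2 < 1$ becomes tight and $h(tw_R)$ must be shown to remain $o(1)$ in $Y$. Subcriticality $p < 4$ is decisive both through the Gagliardo--Nirenberg bounds of Lemma \ref{lemgnsg} (making the nonlinearity a sub-dominant perturbation) and through the arithmetic inequality $-p/2 > -2$ between nonlinear and spectral scalings; the corresponding argument breaks down for $p \geq 4$, consistent with the different regime addressed later via Lemma \ref{lempgeq4}.
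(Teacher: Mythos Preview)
Your approach is essentially the paper's: the same spinor $(1,0)^T$ extended from $\K$ to the half-lines by a slowly varying cutoff (you use a smooth $\chi(x/R)$, the paper uses the piecewise linear $\max\{0,1-bx\}$ with $b\sim R^{-1}$), the same spectral estimate $\|w_R\|^2 - m = O(R^{-2})$ via the identity $(\D - mI)\tilde v_R = (0,-\imath(\tilde v_R^1)')^T$, and the same decisive comparison $R^{-p/2} \gg R^{-2}$ for $p<4$.

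There is, however, one genuine weak point. Your control of the reduction map, ``$h\in C^1$ with $h(0)=0$ \dots\ shows $\|h(tw_R)\|\to 0$ uniformly in $t\in[0,1]$,'' does not work as stated: $tw_R$ does \emph{not} tend to $0$ in $Y$ (indeed $\|w_R\|^2\to m>0$), so continuity of $h$ at the origin is irrelevant. The correct bound, which the paper uses explicitly, is purely variational: since $h(v)$ maximizes $w\mapsto I_0(v+w)$ over $Y^-$, comparing with $w=0$ gives
\[
\tfrac12\|v\|^2-\tfrac12\|h(v)\|^2-\Psi(v+h(v))\ \geq\ \tfrac12\|v\|^2-\Psi(v),
\]
hence $\tfrac{p}{2}\|h(v)\|^2 + a\int_\K|v+h(v)|^p \leq a\int_\K|v|^p$. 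Applied to $v=tw_R$ this yields $\|h(tw_R)\|^2 \leq \tfrac{2a}{p}\int_\K|w_R|^p = O(R^{-p/2})$, which is exactly the uniform smallness you need and also feeds directly into the monotonicity computation $g'(t)\geq t\|w_R\|^2 - at^{p-1}\int_\K|w_R|^p$. Your phrase ``vanishing of the nonlinear coupling'' points in the right direction, but the mechanism is this inequality, not differentiability of $h$. Relatedly, your closing paragraph misidentifies the obstacle: $J$ is defined on all of $Y^+$ with no constraint, so nothing ``becomes tight'' at $t=1^-$; the only issue is bounding $h$, and the variational inequality above disposes of it in one line.
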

\begin{proof}
Let the half-lines of the graph $\G$ be denoted by $\h_1, \h_2,...,\h_N$, and let the total length of the graph $\K$ be $\abs{\K} = \sum_{e \in \K}\ell_e$. Let $b > 0$. Define $\varphi_{b}^1: \G \to \mathbb{C}$ as
$$
    \varphi_{b}^1(x)= \begin{cases}1 & \text { for } x \in \K\\ \max\{0,1-bx\} & \text { for } x \in \h_i,\quad i = 1,2,...,N, \end{cases}
$$
and define $\varphi_{b}: \G \to \mathbb{C}^2$ by
$$\varphi_{b} = \binom{\varphi_{b}^1}{0}.$$

We first estimate  $J(\frac{\varphi_b^+}{\norm{\varphi_b^+}_2})$.
It is clear that $\varphi_{b} \in \operatorname{dom}(\D)$, $(\D- mI)\varphi_{b} = \left(0, -\imath \left(\varphi_{b}^1\right)^\prime\right)^T$ and $((\D- mI)\varphi_{b}, \varphi_{b})_2 = 0$. Thus, we obtain
\[
\norm{\left(\varphi_{b}^1\right)^\prime}_{L^2(\G)}\norm{\varphi_{b}^-}_2\geq \abs{((\D- mI)\varphi_{b},\varphi_{b}^-)_2} = \abs{((\D- mI)\varphi_{b}^-,\varphi_{b}^-)_2} = \norm{\varphi_{b}^-}^2 + m \norm{\varphi_{b}^-}^2_2 \geq 2m\norm{\varphi_{b}^-}_2^2.
\]
Hence,
\begin{equation}\label{eqphi-}
\norm{\varphi_{b}^-}^2 + m \norm{\varphi_{b}^-}^2_2 \leq \abs{((\D- mI)\varphi_{b},\varphi_{b}^-)_2} \leq \frac{1}{2m}\norm{\left(\varphi_{b}^1\right)^\prime}_{L^2(\G)}^2 = \frac{Nb}{2m}.
\end{equation}
Since $((\D- mI)\varphi_{b},\varphi_{b}^+)_2 + ((\D- mI)\varphi_{b},\varphi_{b}^-)_2 = ((\D- mI)\varphi_{b}, \varphi_{b})_2 = 0$, we obtain
\begin{equation}\label{eqphi+}
\norm{\varphi_{b}^+}^2 - m \norm{\varphi_{b}^+}^2_2 = ((\D- mI)\varphi_{b}^+,\varphi_{b}^+)_2 = ((\D- mI)\varphi_{b},\varphi_{b}^+)_2 \leq \frac{Nb}{2m}.
\end{equation}
Since $\norm{\varphi_b}_2^2 = \frac{N}{3b} + \abs{\K}$, \eqref{eqphi-} implies
\begin{equation}\label{eqphi+2}
  \frac{\norm{\varphi_b^+}^2_2}{\norm{\varphi_b}^2_2} \to 1, \quad \frac{\norm{\varphi_b^+}^2}{\norm{\varphi_b}^2} \to 1 \text{ and } \int_{\K}\abs{\varphi_b^+}^{p} \to \int_{\K}\abs{\varphi_b}^{p} = \abs{\K} \text{ as } b \to 0.
\end{equation}

Next, for $t > 0$, from the definition of $h$, we have
$$
I_0(t\varphi_b+h(t\varphi_b)) \geq I_0(t\varphi_b),
$$
which implies
\begin{equation}\label{eqhphi}
    \frac{p}{2}\norm{h(t\varphi_b)}^{2} \leq a\int_{\K}\abs{t\varphi_b+h(t\varphi_b)}^{p}\,dx + \frac{p}{2}\norm{h(t\varphi_b)}^{2} \leq a\int_{\K}\abs{t\varphi_b}^{p} \,dx = a t^p\abs{\K}.
\end{equation}

Thus, for sufficiently small $b>0$, using \eqref{eqphi+}-\eqref{eqhphi}, $p \in (2,4)$, Gagliardo-Nirenberg-Sobolev inequality in Lemma \ref{lemgnsg} and convexity of $\Psi$, we obtain
    \begin{equation}\label{eqjsmallm}
            \begin{aligned}
                J(\frac{\varphi_b^+}{\norm{\varphi_b^+}_2}) =& \frac{\norm{\varphi_b^+}^2}{2\norm{\varphi_b^+}^2_2}-\frac{1}{2}\left\lVert h\left(\frac{\varphi_b^+}{\norm{\varphi_b^+}_2}\right) \right\rVert^2 - \Psi\left(\frac{\varphi_b^+}{\norm{\varphi_b^+}_2}+h\left(\frac{\varphi_b^+}{\norm{\varphi_b^+}_2}\right)\right)\\
                \leq& \frac{m}{2} + \frac{\norm{\varphi_b^+}^2 - m\norm{\varphi_b^+}^2_2}{2\norm{\varphi_b^+}^2_2}   -\frac{1}{2}\left\lVert h\left(\frac{\varphi_b^+}{\norm{\varphi_b^+}_2}\right) \right\rVert^2\\
                & - 2^{1-p}\norm{\varphi_b^+}_2^{-p}\Psi(\varphi_b^+) +  \Psi\left(h\left(\frac{\varphi_b^+}{\norm{\varphi_b^+}_2}\right)\right)\\
                \leq& \frac{m}{2} + C_1b^2 - C_2ab^\frac{p}{2} - \frac{1}{2}\left\lVert h\left(\frac{\varphi_b^+}{\norm{\varphi_b^+}_2}\right)\right\rVert^2\left(1 - \frac{aC_{p,\K}}{pm}\left\lVert h\left(\frac{\varphi_b^+}{\norm{\varphi_b^+}_2}\right) \right\rVert^{p-2}\right) \\
                \leq& \frac{m}{2} + C_1b^2 - C_2ab^\frac{p}{2} - \frac{1}{2}\left\lVert h\left(\frac{\varphi_b^+}{\norm{\varphi_b^+}_2}\right)\right\rVert^2(1 - C_3 a^\frac{p}{2}b^\frac{p(p-2)}{4})\\
                <& \frac{m}{2}.
    \end{aligned}
    \end{equation}

Define $g(t):=J(t\varphi_b^+)$. Then, from \eqref{eqhphi} and the definition of $J$, for $0 \leq t \leq\norm{\varphi_b^+}_{2} ^{-1}$ and sufficiently small $b$, we have
$$
\begin{aligned}
g'(t) & =\frac{1}{t}\langle J^{\prime}(t\varphi_b^+), t\varphi_b^+\rangle=\frac{1}{t}\langle I_0'(t\varphi_b^++h(t\varphi_b^+)), t \varphi_b^++h'(t\varphi_b^+) t\varphi_b^+\rangle \\
& =\frac{1}{t}\langle I_0^{\prime}(t\varphi_b^++h(t\varphi_b^+)), t\varphi_b^+ + h(t\varphi_b^+)\rangle \\
& =\frac{1}{t}\left[\norm{t\varphi_b^+}^2-\norm{h(t\varphi_b^+)}^2-a\int_{\K} \abs{t\varphi_b^+ + h(t\varphi_b^+)}^p \,dx\right]\\
& \geq \frac{1}{t}\left[\norm{t\varphi_b^+}^{2}-a\int_{\K}\abs{t\varphi_b^+}^{p}\,dx+\frac{p-2}{2}\norm{h(t\varphi_b^+)}^{2}\right] \\
& \geq t\norm{\varphi_b^+}^2-at^{p-1}\int_{\K} \abs{\varphi_b^+}^{p} \,dx \\
& \geq t\left(\norm{\varphi_b^+}^2-a\norm{\varphi_b^+}_{2}^{2-p}\int_{\K} \abs{\varphi_b^+}^{p} \,dx\right)\\
& \geq t(C_1b^{-1} - C_2ab^\frac{p-2}{2}),
\end{aligned}
$$
which implies $g(t)$ is increasing for $0 \leq t \leq\norm{\varphi_b^+}_{2} ^{-1}$ when $b$ is sufficiently small. Now, fix $b$ sufficiently small. Then, by \eqref{eqcssup} and \eqref{eqjsmallm}, we have
$$
c_{\infty} \leq \sup _{0 \leq t \leq\norm{\varphi_b^+}_{2} ^{-1}} J(t\varphi_b^+) \leq J(\frac{\varphi_b^+}{\norm{\varphi_b^+}_2}) < \frac{m}{2}.
$$
This completes the proof of lemma.
\end{proof}
The mountain pass geometry allows us to find a Palais-Smale sequence $\{v_{n}\}$ satisfying
\begin{equation*}
J_{r, \mu}(v_{n}) \rightarrow c_{r, \mu}, \quad J_{r, \mu}^{\prime}(v_{n}) \rightarrow 0
\end{equation*}
which we call a mountain pass critical sequence. Define $u_{n}:=v_{n}+h_{r, \mu}(v_{n})$, $\omega_{n}=2f_{r}^{\prime}((T_{\mu} u_{n}, u_{n})_{2})$. We have the following Lemma.
\begin{lemma}\label{lemwn}
If $c_\infty < \frac{m}{2}$, then, for sufficiently large $r$, we have
$$
\limsup _{n \rightarrow \infty} \omega_{n} \leq 2 c_{\infty}.
$$
\end{lemma}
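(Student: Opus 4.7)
The strategy is to convert the Palais--Smale information at level $c_{r,\mu}$ into a scalar bound on $f_r'(s_n)$, where $s_n := (T_\mu u_n, u_n)_2$, and then exploit the fact that the penalization becomes ``stiffer'' as $r \to \infty$.

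First I would translate the PS conditions for $J_{r,\mu}$ into PS conditions for $I_{r,\mu}$ at $u_n = v_n + h_{r,\mu}(v_n)$, a property built into the Lyapunov--Schmidt reduction of Subsection~\ref{subsecper}. Testing $\langle I_{r,\mu}'(u_n), u_n\rangle = o(1)$ with the homogeneity identities $\langle \Psi'(u),u\rangle = p\Psi(u)$ and $\langle H_{r,\mu}'(u),u\rangle = 2 f_r'(s_n)\,s_n = \omega_n s_n$, and subtracting from $2 I_{r,\mu}(u_n) = 2 c_{r,\mu} + o(1)$, the indefinite quadratic part $\|u_n^+\|^2-\|u_n^-\|^2$ cancels and I obtain the key scalar identity
\[
(p-2)\Psi(u_n) + 2\bigl[s_n f_r'(s_n) - f_r(s_n)\bigr] \;=\; 2\,c_{r,\mu} + o(1).
\]
Writing $g_r(s) := s f_r'(s) - f_r(s)$ and using $\Psi(u_n) \geq 0$ together with $c_{r,\mu} \leq c_\infty$, this yields $g_r(s_n) \leq c_\infty + o(1)$.

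Next I would record three elementary facts about $g_r$ and $f_r$. A direct computation gives
\[
g_r(s) \;=\; (r-1)\, f_r(s) + \frac{s\, f_r(s)}{1-s}, \qquad g_r'(s) \;=\; s f_r''(s) \;>\; 0 \text{ on } (0,1),
\]
from which $f_r(s) \leq g_r(s)/(r-1)$, and $g_r$ is a strictly increasing $C^1$-bijection $[0,1)\to[0,\infty)$; also $f_r'$ is strictly increasing since $f_r''>0$. Monotonicity of both $g_r$ and $f_r'$, combined with the algebraic identity $s f_r'(s) = g_r(s) + f_r(s)$ and the comparison $f_r(s)\leq g_r(s)/(r-1)$, then gives, setting $s_r^\ast := g_r^{-1}(c_\infty + o(1))$,
\[
\omega_n \;=\; 2 f_r'(s_n) \;\leq\; 2 f_r'(s_r^\ast) \;\leq\; \frac{2r}{r-1}\cdot\frac{c_\infty + o(1)}{s_r^\ast}.
\]
If instead a subsequence has $s_n \to 0$, then directly $\omega_n = 2 f_r'(s_n) \to 0$ and the estimate is trivial.

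The final step is the asymptotic analysis of $s_r^\ast$ as $r\to\infty$: for every fixed $s \in [0,1)$ one has $f_r(s) = s^r/(1-s) \to 0$, hence $g_r(s) \to 0$, and monotonicity forces $s_r^\ast \to 1^-$. Together with $r/(r-1)\to 1$, this lets the right-hand side above be made arbitrarily close to $2 c_\infty$ by choosing $r$ sufficiently large, which is the stated conclusion $\limsup_n \omega_n \leq 2 c_\infty$. The most delicate quantitative point---and the step I would handle with most care---is the rate at which $s_r^\ast \to 1$; the natural calibration is $s = 1 - \kappa(r)/r$ with $\kappa(r)\gg \log r$, at which point $f_r(s) \sim e^{-\kappa(r)} r/\kappa(r)$ already blows up $g_r(s)$, so $s_r^\ast = 1 - O(\log r / r)$, more than enough to absorb the factor $r/(r-1)$. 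The hypothesis $c_\infty < m/2$ is not needed for this inequality itself, but it is exactly what makes the conclusion useful in the sequel, forcing $\omega_n$ eventually into the spectral gap $(-m,m)$ of $\D$.
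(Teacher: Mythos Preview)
Your argument is correct and reaches the same conclusion as the paper, but the key technical step is handled differently. Both you and the paper derive the scalar inequality $g_r(s_n)\le c_\infty+o(1)$ with $g_r(s)=sf_r'(s)-f_r(s)$, set $\xi_r:=g_r^{-1}(c_\infty)$ (your $s_r^*$), and use monotonicity of $f_r'$ to reduce matters to bounding $f_r'(\xi_r)$. The paper then introduces an auxiliary point $\tau_r$ defined by $2f_r'(\tau_r)=m$, observes $d_r:=g_r(\tau_r)\to m/2>c_\infty$ so that $\xi_r<\tau_r$, and uses the integral comparison
\[
d_r-c_\infty=\int_{\xi_r}^{\tau_r} s f_r''(s)\,ds\le\int_{\xi_r}^{\tau_r} f_r''(s)\,ds=\tfrac{m}{2}-f_r'(\xi_r)
\]
to conclude $\limsup_{r}f_r'(\xi_r)\le c_\infty$. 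You instead exploit the algebraic identity $g_r(s)=(r-1)f_r(s)+\tfrac{s}{1-s}f_r(s)\ge (r-1)f_r(s)$ to get $s f_r'(s)=g_r(s)+f_r(s)\le\tfrac{r}{r-1}g_r(s)$, hence $f_r'(\xi_r)\le\tfrac{r}{r-1}\cdot\tfrac{c_\infty}{\xi_r}$, and combine with $\xi_r\to 1$. Your route is slightly more elementary (no auxiliary point, no integral), and as you correctly note it does not use the hypothesis $c_\infty<\tfrac{m}{2}$; the paper's written argument does use it to guarantee $\xi_r<\tau_r$, though the same idea would work with $m$ replaced by any constant exceeding $2c_\infty$. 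One cosmetic point: writing $s_r^*=g_r^{-1}(c_\infty+o(1))$ mixes the $n$-limit into the definition; cleaner is to fix $\epsilon>0$, use $s_n\le g_r^{-1}(c_\infty+\epsilon)$ for $n$ large, run your bound, and let $\epsilon\downarrow 0$ at the end.
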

\begin{proof}
 First, observe that $f_{r}^{\prime}(s)s-f_{r}(s)$ and $f_{r}^{\prime}(s)$ are strictly increasing with respect to $s \in (0,1)$. Since $0<c_\infty < \frac{m}{2}$, by continuity of $s \mapsto f_{r}^{\prime}(s) s-f_{r}(s)$ and $s \mapsto f_{r}^{\prime}(s)$, there exist $\xi_{r}, \tau_{r} \in (0,1)$ such that
$$
c_{\infty}=f_{r}^{\prime}(\xi_{r})\xi_{r}-f_{r}(\xi_{r})\,\, \text{ and }\,\, 2f_{r}^{\prime}(\tau_{r})=m.
$$
Let $d_{r}=f_{r}^{\prime}(\tau_{r}) \tau_{r}-f_{r}(\tau_{r})$. By the properties of $f_{r}$, it follows that
$$
\xi_{r}, \tau_{r} \to 1, \quad d_{r} \to \frac{m}{2},\,\,\text{as}\,\,r \to \infty.
$$
This yields that $c_{\infty}<d_{r}$ for $r$ large enough, and implying $\xi_{r}<\tau_{r}$. Moreover, for $r$ large enough, by a simple computation, we have
$$
\begin{aligned}
d_{r}-c_{\infty} & =f_{r}^{\prime}(\tau_{r}) \tau_{r}-f_{r}(\tau_{r})-f_{r}^{\prime}(\xi_{r}) \xi_{r}+f_{r}(\xi_{r}) \\
& =\int_{\xi_{r}}^{\tau_{r}} f_{r}^{''}(s) s \,ds \\
& \leq \int_{\xi_{r}}^{\tau_{r}} f_{r}^{''}(s) \,ds=\frac{m}{2}-f_{r}^{\prime}(\xi_{r}).
\end{aligned}
$$
Thus, we have
\begin{equation}\label{eqsupfr}
\limsup_{r \to \infty} f_{r}^{\prime}(\xi_{r}) \leq c_{\infty}.
\end{equation}
Next, from the definition of $\{v_{n}\}$, we have
$$
\begin{aligned}
 o_{n}(1)\norm{v_{n}} &= \langle J_{r, \mu}^{\prime}(v_{n}), v_{n}\rangle=\langle I_{r, \mu}^{\prime}(v_{n}+h_{r, \mu}(v_{n})),v_{n}+h_{r, \mu}^{\prime}(v_{n}) v_{n})\rangle \\
& =\langle I_{r, \mu}^{\prime}(v_{n}+h_{r, \mu}(v_{n})),v_{n}+h_{r, \mu}(v_{n})\rangle \\
& =2 J_{r, \mu}(u_{n})-\frac{a(p-2)}{p} \int_{\K}\abs{u_{n}}^{p}\,dx+2 f_{r}((T_{\mu} u_{n}, u_{n})_{2})-2f_{r}^{\prime}((T_{\mu} u_{n}, u_{n})_{2})(T_{\mu} u_{n}, u_{n})_{2}.
\end{aligned}
$$
Moreover, there exists $\sigma_{r, \mu} \in(0,1)$ such that $c_{r, \mu}=f_{r}^{\prime}(\sigma_{r, \mu}) \sigma_{r, \mu}-f_{r}(\sigma_{r, \mu}).$ Therefore, we get
$$
\begin{aligned}
& \limsup_{n \to \infty}\left[f_{r}^{\prime}((T_{\mu} u_{n}, u_{n})_{2})(T_{\mu} u_{n}, u_{n})_{2}-f_{r}((T_{\mu} u_{n}, u_{n})_{2})\right] \leq \lim_{n \to \infty} J_{r, \mu}(u_{n}) \\
& =c_{r, \mu}=f_{r}^{\prime}(\sigma_{r, \mu}) \sigma_{r, \mu}-f_{r}(\sigma_{r, \mu}) \leq c_{\infty}=f_{r}^{\prime}(\xi_{r}) \xi_{r}-f_{r}(\xi_{r}).
\end{aligned}
$$
Since  $s \mapsto f_{r}(s)$ and  $s \mapsto f_{r}^{\prime}(s) s-f_{r}(s)$ are the strictly monotonic in $s$, we deduce that
$$
\limsup _{n \rightarrow \infty} \omega_{n}=2 \limsup _{n \rightarrow \infty} f_{r}^{\prime}\left(\left(T_{\mu} u_{n}, u_{n}\right)_{2}\right) \leq 2 f_{r}^{\prime}\left(\xi_{r}\right)
$$
and by \eqref{eqsupfr},
$$
\limsup _{n \rightarrow \infty} \omega_{n}=2 \limsup _{n \rightarrow \infty} f_{r}^{\prime}\left(\left(T_{\mu} u_{n}, u_{n}\right)_{2}\right) \leq 2 c_{\infty}.
$$
This completes the proof.
\end{proof}
\section{Proof of Theorem \ref{th1}}\label{sect3}
In this section, we provide the proof of Theorem \ref{th1}. To begin, we first establish the following result.
\begin{theorem}\label{th31}
If $c_\infty<\frac{m}{2}$, then, one of the following two cases holds:
\begin{enumerate}[label=\rm(\roman*)]
\item either there exist a non-trivial $u_{0} \in \operatorname{dom}(\D)$ and $\omega_{0} \in[0, m)$ such that
$$
\left\{\begin{array}{l}
\D u_{0}-\omega_{0} u_{0}=a\chi_{\K}\abs{u_{0}}^{p-2} u_{0},\\
\int_{\G}\left|u_{0}\right|^{2} \,dx = 1,
\end{array}\right.
$$
and
$$
0<\frac{1}{2}(\D u_{0}, u_{0})_{2}-\Psi(u_{0})=c_{\infty},
$$
\item or there exists a non-trivial $u_{0} \in \operatorname{dom}(\D)$ such that
$$
\left\{\begin{array}{l}
\D u_{0}=a\chi_{\K}\abs{u_{0}}^{p-2} u_{0}, \\
\int_{\G}\left|u_{0}\right|^{2} \,dx < 1,
\end{array}\right.
$$
and
$$
0<\frac{1}{2}(\D u_{0}, u_{0})_{2}-\Psi(u_{0})=c_{\infty}.
$$
\end{enumerate}
\end{theorem}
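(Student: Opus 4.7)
The plan is to apply the standard mountain pass theorem to the reduced perturbation functional $J_{r,\mu}$, which already possesses the MP geometry established above, and then to pass to the limit as $r\to\infty$ and $\mu\to 0^{+}$ to recover a critical point of the limiting problem. The dichotomy between cases (i) and (ii) should emerge naturally from the asymptotic behavior of the ``mass'' $\sigma_{r,\mu}:=(T_{\mu}u_{r,\mu},u_{r,\mu})_{2}$ and the Lagrange-type multiplier $\omega_{r,\mu}=2f_{r}^{\prime}(\sigma_{r,\mu})$.

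First, for each $r$ large and $\mu>0$ small, I would apply the mountain pass theorem to $J_{r,\mu}$ at level $c_{r,\mu}>0$ to obtain a Palais--Smale sequence $\{v_{n}\}\subset Y^{+}\cap U_{\mu}$, and set $u_{n}:=v_{n}+h_{r,\mu}(v_{n})$. By the one-to-one correspondence between critical points of $J_{r,\mu}$ and $I_{r,\mu}$, $\{u_{n}\}$ is a PS sequence for $I_{r,\mu}$ at level $c_{r,\mu}$, and it is bounded in $Y$ since $U_{\mu}\subset\{\|u\|<\mu^{-1/2}\}$. Combining this with the \emph{compact} embedding $Y\hookrightarrow L^{p}(\K)$ (which is available precisely because the nonlinearity is localized on the compact core) and the standard reduction machinery for strongly indefinite functionals, one obtains a critical point $u_{r,\mu}$ of $I_{r,\mu}$ at level $c_{r,\mu}$, satisfying
\begin{equation*}
    \D u_{r,\mu}-\omega_{r,\mu}\,T_{\mu}u_{r,\mu}=a\chi_{\K}\abs{u_{r,\mu}}^{p-2}u_{r,\mu},\qquad \omega_{r,\mu}=2f_{r}^{\prime}(\sigma_{r,\mu})>0.
\end{equation*}
Lemma \ref{lemwn} together with $c_{\infty}<\tfrac{m}{2}$ gives $\limsup\omega_{r,\mu}\leq 2c_{\infty}<m$ for $r$ large and $\mu$ small.

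Next, I would extract uniform-in-$(r,\mu)$ bounds on $\{u_{r,\mu}\}$ in $Y$ from the identity obtained by testing with $u_{r,\mu}$ itself, together with the Gagliardo--Nirenberg--Sobolev inequality of Lemma \ref{lemgnsg} and the strict bound $\omega_{r,\mu}<m$. Passing to a diagonal subsequence as $\mu\to 0^{+}$ and $r\to\infty$, I would extract a weak limit $u_{0}\in Y$. Since $T_{\mu}u\to u$ in $L^{2}$ as $\mu\to 0$, and since the $L^{p}(\K)$ term converges strongly by the compact embedding, the equation passes to the limit and yields
\begin{equation*}
    \D u_{0}-\omega_{0}u_{0}=a\chi_{\K}\abs{u_{0}}^{p-2}u_{0},\qquad \omega_{0}:=\lim\omega_{r,\mu}\in[0,m).
\end{equation*}
Now the dichotomy: if $\sigma_{r,\mu}\to 1$, then $\|u_{0}\|_{2}^{2}=1$ and case (i) holds. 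If, on the contrary, $\sigma_{r,\mu}$ stays bounded away from $1$ along the subsequence, then $f_{r}^{\prime}(s)$ decays to zero as $r\to\infty$ uniformly on $[0,1-\delta]$ (because of the factor $s^{r-1}$), so $\omega_{0}=0$ and $\|u_{0}\|_{2}^{2}<1$, yielding case (ii). The energy identity $\tfrac{1}{2}(\D u_{0},u_{0})_{2}-\Psi(u_{0})=c_{\infty}$ follows by passing to the limit in $I_{r,\mu}(u_{r,\mu})=c_{r,\mu}\to c_{\infty}$, noting that the penalization $H_{r,\mu}(u_{r,\mu})$ tends to zero in both regimes (either $f_{r}(\sigma_{r,\mu})\to 0$ directly, or the Pohozaev-type identity $f_{r}^{\prime}\sigma_{r,\mu}-f_{r}\to c_{\infty}$ together with $\omega_{r,\mu}\sigma_{r,\mu}\to \omega_{0}\|u_{0}\|_{2}^{2}$ eliminates the offending terms). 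Non-triviality of $u_{0}$ is guaranteed by $c_{\infty}>0$ combined with the mountain pass level lower bound $\alpha$.

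The principal obstacle will be executing the simultaneous limits $r\to\infty$ and $\mu\to 0^{+}$ while maintaining both the $Y$-boundedness and the non-triviality of the family $\{u_{r,\mu}\}$, and in particular controlling the penalization $H_{r,\mu}$, which is singular as $\sigma_{r,\mu}\to 1$. The quantitative bound $c_{\infty}<\tfrac{m}{2}$ is exactly what keeps $\omega_{0}$ strictly inside the spectral gap $(-m,m)$ via Lemma \ref{lemwn}, preventing the limit from being absorbed by the essential spectrum of $\D$ given by \eqref{eqsp}; this is the crucial point where the strong indefiniteness of the Dirac functional is finally tamed.
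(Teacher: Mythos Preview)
Your overall strategy matches the paper's closely: first obtain critical points $u_{r,\mu}$ of $I_{r,\mu}$ by mountain pass plus Palais--Smale compactness, then send $r\to\infty$, $\mu\to 0^{+}$ and analyze the limit. However, there is a real gap in your second limit: you write ``extract a weak limit $u_{0}\in Y$'' and then pass the equation to the limit weakly. Weak convergence is not enough for the conclusions you draw. If $\sigma_{r,\mu}\to 1$ you cannot conclude $\|u_{0}\|_{2}^{2}=1$ from weak convergence (only $\leq 1$); the energy identity $\tfrac{1}{2}(\D u_{0},u_{0})_{2}-\Psi(u_{0})=c_{\infty}$ does not follow because the quadratic part $\|u^{+}\|^{2}-\|u^{-}\|^{2}$ is not weakly continuous; and non-triviality of $u_{0}$ is not guaranteed by $c_{\infty}>0$ under weak convergence alone.

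The paper closes this gap by proving \emph{strong} convergence in $Y$ at both stages. The key computation is to test the equations satisfied by $u_{n}$ and by its weak limit against $u_{n}^{+}-u^{+}$ (and analogously against $u_{n}^{-}-u^{-}$). The nonlinear term vanishes thanks to the compact embedding $Y\hookrightarrow L^{p}(\K)$, and the term involving $\omega$ is controlled by
\[
(\omega_{n}T_{\mu}u_{n}-\omega T_{\mu}u,\,u_{n}^{+}-u^{+})_{2}\leq o_{n}(1)+\frac{2c_{\infty}+3m}{4m}\|u_{n}^{+}-u^{+}\|^{2},
\]
whose coefficient is strictly less than $1$ precisely because Lemma \ref{lemwn} gives $\omega_{r,\mu}\leq c_{\infty}+\tfrac{m}{2}<m$ for $r$ large and $\mu$ small. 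This absorption forces $\|u_{n}^{\pm}-u^{\pm}\|\to 0$. Once strong convergence is in hand, the dichotomy (phrased as $\|u_{0}\|_{2}=1$ versus $\|u_{0}\|_{2}<1$), the exact energy equality, and non-triviality all follow as you outlined. Your identification of $c_{\infty}<\tfrac{m}{2}$ as the mechanism keeping $\omega_{0}$ in the spectral gap is correct, but note it does double duty: it is also exactly what makes the strong-convergence absorption step go through.
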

\begin{proof}
We  prove Theorem \ref{th31} in two steps.

{\bf Step 1} Fixed $r<\infty$ large and $\mu>0$ small.

From Lemma \ref{lemwn}, we may assume that $\omega_{n} \to \omega_{r, \mu} \in[0, c_{\infty}+\frac{m}{2}] \subset [0, m)$. By the boundedness of $\{u_{n}\}$ that, up to a subsequence, $u_{n} \rightharpoonup u_{r, \mu}$ in $Y$.  We claim that $u_{n} \to u_{r, \mu}$ in $Y$.

 Note that, $u_{n}$ and $u_{r, \mu}$ satisfy $I_{r,\mu}'(u_n) \to 0$ and $I_{0}'(u_{r, \mu}) - \omega_{r, \mu}T_{\mu}u_{r, \mu} = 0$, respectively. Taking the scalar product of $I_{r,\mu}'(u_n)$ and $I_{r,\mu}'(u_{r,\mu})$ with $u_{n}^{+}-u_{r, \mu}^{+}$, we obtain
\begin{equation}\label{equn+}
    \begin{aligned}
\norm{u_{n}^{+}-u_{r, \mu}^{+}}^{2}= & a\int_{\K} (\abs{u_{n}}^{p-2} u_{n}-\abs{u_{r, \mu}}^{p-2} u_{r, \mu})(u_{n}^{+}-u_{r, \mu}^{+}) \,dx  \\
& +(\omega_{n} T_{\mu} u_{n}-\omega_{r, \mu} T_{\mu} u_{r, \mu}, u_{n}^{+}-u_{r, \mu}^{+})_{2}+o_{n}(1).
\end{aligned}
\end{equation}
Since the embedding $ Y \hookrightarrow L^p(\K,\mathbb{C}^2)$ is compact, we can verify that
\begin{equation}
    \int_{\K} (\abs{u_{n}}^{p-2} u_{n}-\abs{u_{r, \mu}}^{p-2} u_{r, \mu})(u_{n}^{+}-u_{r, \mu}^{+}) \, dx \to 0,\,\,\text{as}\,\, n \rightarrow \infty.
\end{equation}
Moreover, for $\mu>0$ small, we have
\begin{equation}\label{eqomegantmu}
   \begin{aligned}
&(\omega_{n} T_{\mu} u_{n}-\omega_{r, \mu} T_{\mu} u_{r, \mu}, u_{n}^{+}-u_{r, \mu}^{+})_{2} \\
&\quad =(\omega_{n}-\omega_{r, \mu})(T_{\mu} u_{n}, u_{n}^{+}-u_{r, \mu}^{+})_{2}+\omega_{r, \mu}(T_{\mu}(u_{n}-u_{r, \mu}), u_{n}^{+}-u_{r, \mu}^{+})_{2} \\
&\quad =o_{n}(1)+\omega_{r, \mu}\norm{u_{n}^{+}-u_{r, \mu}^{+}}_{2}^{2}+\mu \omega_{r, \mu}\norm{u_{n}^{+}-u_{r, \mu}^{+}}^{2}\\
&\quad \leq o_{n}(1)+\frac{2 c_{\infty}+m}{2 m}\norm{u_{n}^{+}-u_{r, \mu}^{+}}^{2}+\frac{m-2 c_{\infty}}{4 m}\norm{u_{n}^{+}-u_{r, \mu}^{+}}^{2} \\
&\quad \leq o_{n}(1)+\frac{2 c_{\infty}+3 m}{4 m}\norm{u_{n}^{+}-u_{r, \mu}^{+}}^{2} .
\end{aligned}
\end{equation}
Thus, it follows from \eqref{equn+} to \eqref{eqomegantmu} that $u_{n}^{+} \to u_{r, \mu}^{+}$ in $Y$. Similarly, we can show $h_{r,\mu}(u_n^+) = u_{n}^{-} \to u_{r, \mu}^{-}$ in $Y$, and hence $u_{n} \to u_{r, \mu}$ in $Y$. Since $h \in C^1(Y^+, Y^-)$, we obtain that $h_{r,\mu}(u_{r,\mu}^+) = u_{r, \mu}^{-}$. Therefore, by continuity, we have
$$
J_{r, \mu}^{\prime}(v_{r, \mu})=0, \quad J_{r, \mu}(v_{r, \mu})=c_{r, \mu}, \quad (T_{\mu} u_{r, \mu}, u_{r, \mu})_{2}<1,
$$
$$
\omega_{r, \mu}=2 f_{r}^{\prime}((T_{\mu} u_{r, \mu}, u_{r, \mu})_{2}) \in[0, c_{\infty}+\frac{m}{2}] \subset[0, m),
$$
and
$$
0<
\frac{1}{2}\norm{u_{r, \mu}^+}^2 - \frac{1}{2}\norm{u_{r, \mu}^-}^2-\Psi(u_{r, \mu})=c_{r, \mu}+H_{r, \mu}(u_{r, \mu})<c_{r, \mu}+\frac{\omega_{r, \mu}}{2r}.
$$
{\bf Step 2} As $r_{n} \to \infty$ and $\mu_{n} \to 0$.

Let $r_{n}>1$ and $\mu_{n}>0$ be such that $r_{n} \nearrow \infty$ and $\mu_{n} \searrow 0$ (monotone sequences).  Consider a sequence $\left\{v_{r_{n}, \mu_{n}}\right\}$ such that
$$
J_{r_{n}, \mu_{n}}\left(v_{r_{n}, \mu_{n}}\right)=c_{r_{n}, \mu_{n}}, \quad J_{r_{n}, \mu_{n}}^{\prime}\left(v_{r_{n}, \mu_{n}}\right)=0.
$$
Since the domain of $J_{r_{n}, \mu_{n}}$ depends on $n$. Similar to step 1, define $\tilde{u}_n := $ $v_{r_{n}, \mu_{n}}+h_{r_{n}, \mu_{n}}\left(v_{r_{n}, \mu_{n}}\right)$ and $\tilde{\omega}_{n}:=2 f_{r_{n}}^{\prime}((T_{\mu_{n}} u_{r_{n}, \mu_{n}}, u_{r_{n},\mu_{n}})_{2})$. We have $I_{r_{n}, \mu_{n}}(\tilde{u}_n)=c_{r_n, \mu_n}$, $I_{r_{n}, \mu_{n}}^{\prime}(\tilde{u}_n)=0$ and $\tilde{\omega}_{n} \leq c_{\infty}+\frac{m}{2}<m$.
Then, since $f_r^{\prime}(s)s > rf_r(s)$ for $0 < s < 1$, we obtain
$$
\begin{aligned}
2c_\infty + o_n(1) &= 2I_{r_{n}, \mu_{n}}(\tilde{u}_n) - \langle I_{r_{n}, \mu_{n}}^{\prime}(\tilde{u}_n),\tilde{u}_n\rangle  \\
&= \frac{a(p-2)}{p}\int_{\K}\abs{\tilde{u}_n}^p\, dx + 2f_{r_n}^{\prime}((T_{\mu_n} \tilde{u}_n,\tilde{u}_n)_2)(T_{\mu_n} \tilde{u}_n, \tilde{u}_n)_2 - 2f_{r_n}((T_{\mu_n} \tilde{u}_n, \tilde{u}_n)_2)\\
& \geq \frac{a(p-2)}{p}\int_{\K}\abs{\tilde{u}_n}^p\, dx.
\end{aligned}
$$
Hence, by Sobolev inequality, $\langle I_{r_{n}, \mu_{n}}^{\prime}(\tilde{u}_n),\tilde{u}_n^+ \rangle = 0$ implies
\[
\begin{aligned}
        \norm{\tilde{u}_n^+}^2 &\leq a\int_{\K}\abs{\tilde{u}_n}^{p-1}\abs{\tilde{u}_n^+}\, dx + \tilde{\omega}_n\norm{\tilde{u}_n^+}_2^2 + \mu_n\tilde{\omega}_n\norm{\tilde{u}_n^+}^2\\
        & \leq a\norm{\tilde{u}_n}_p^{p-1}\norm{\tilde{u}_n^+}_p + \frac{2c_{\infty}+m}{2m}(1+\mu_nm)\norm{\tilde{u}_n^+}^2 \\
        &\leq C\norm{\tilde{u}_n^+} + \frac{2c_{\infty}+m}{2m}(1+\mu_nm)\norm{\tilde{u}_n^+}^2.
\end{aligned}
\]
It follows that $\{\tilde{u}_n^+\}$ is bounded in $Y$. Moreover, since
\[
\frac{1}{2}\norm{\tilde{u}_n^+}^2 - \frac{1}{2}\norm{\tilde{u}_n^-}^2 > J_{r_n,\mu_n}(\tilde{u}_n) = c_{r_n,\mu_n} > 0,
\]
we know that $\{\tilde{u}_n\}$ is bounded in $Y$.
Thus, repeating the arguments of step 1, we may assume that $\tilde{u}_n \to u_{0}$ in $Y$ and $\tilde{\omega}_n \rightarrow \omega_{0}$ as $n \rightarrow \infty$, and satisfying
$I_{\omega_0}^{\prime}(u_0) = 0$, $\norm{u_{0}}_{2} \leq 1$, $0 \leq \omega_{0} \leq c_{\infty}+\frac{m}{2} < m$, and
$$
0<c_{r_{1}, \mu_{1}} \leq \frac{1}{2}\norm{u_0^+}^2 - \frac{1}{2}\norm{u_0^-}^2-\Psi(u_{0})=c_{\infty}.
$$
Now, either $\norm{u_{0}}_{2}=1$ or $\norm{u_{0}}_{2}<1$. When the latter happens,
$$
0 \leq \omega_{0}=\lim _{n \rightarrow \infty} 2f_{r_{n}}^{\prime}(\norm{\tilde{u}_n}_{2}^{2}) \leq \limsup _{n \rightarrow \infty} 2f_{r_{n}}^{\prime}\left(\frac{1+\norm{u_{0}}_{2}^{2}}{2}\right)=0,
$$
that is $\omega_{0}=0$. By \cite[Proposition 3.1]{Bo}, we know that $u \in \operatorname{dom}(\D)$. This completes the proof.
\end{proof}
Now, we will provide the proof of Theorem \ref{th1}.
\begin{proof}[Proof of Theorem \ref{th1}] Apply Lemma \ref{lemcin} and Theorem \ref{th31} to the operator $\D-sI$ for any $s \in (-m, m)$ instead of $\D$. From the proof of Theorem \ref{th31}, one of the following two cases must hold:
    \begin{enumerate}[label=\rm(\roman*)]
\item either there exist a non-trivial $u_{s} \in \operatorname{dom}(\D)$ and $\omega_{s} \in[0, m-s)$ (since $\sigma(\D- sI)=(-\infty,-m-s]\cup[m-s, \infty)$) such that
$$
\left\{\begin{array}{l}
\D u_s-su_{s}-\omega_{s} u_{s}=a\chi_{\K}\abs{u_{s}}^{p-2} u_{s}, \\
\int_{\K}\abs{u_{s}}^{2} \,dx=1,
\end{array}\right.
$$
and
$$
0<\frac{1}{2}((\D-s) u_{s}, u_{s})_{2}-\Psi(u_{s}) = c_{s,\infty}:=\sup _{ r>1, \mu>0} c_{s, r, \mu},
$$
where
$$c_{s, r, \mu} := \inf _{\gamma \in \Gamma_{s, r, \mu}} \max _{t \in[0,1]} J_{s, r, \mu}(\gamma(t))>0,$$
$$
\Gamma_{s, r, \mu} :=\{\gamma \in C([0,1], Y^{+} \cap U_{\mu}): \gamma(0)=0, J_{s, r, \mu}(\gamma(1))<0\},
$$
with $J_{s, r, \mu}: Y^+ \cap U_\mu \to \mathbb{R}$ defined by
$$
J_{s, r, \mu}(v):=\max \left\{I_{r, \mu}(v+w) -s \int_{\G}\abs{v+w}^2\, dx: w \in Y^{-},(T_{\mu}(v+w), v+w)_{2}<1\right\}.
$$
\item or there exists a non-trivial $u_{s} \in \operatorname{dom}(\D)$ such that
$$
\left\{\begin{array}{l}
\D u_s-su_{s}=a\chi_{\K}\abs{u_{s}}^{p-2} u_{s}, \\
\int_{\K}\abs{u_{s}}^{2} \,dx<1,
\end{array}\right.
$$
and
$$
0<\frac{1}{2}((\D-s) u_{s}, u_{s})_{2}-\Psi(u_{s}) = c_{s,\infty}.
$$
\end{enumerate}
Since $s+\omega_{s} \in(-m, m)$, and the proof of Theorem \ref{th1} is complete.
\end{proof}
\section{Proof of Theorem \ref{th2}}\label{sect4}
Assume that the second case of Theorem \ref{th1} occurs. Let $u_{0}$ be a non-trivial solution corresponding to $s =  0$. Then $u_{ 0}$ satisfies
$$
\left\{\begin{array}{l}
\D u_{ 0}=a \chi_{\K}\abs{u_{0}}^{p-2} u_{0}, \\
\int_{\G}\abs{u_{0}}^{2} \,dx<1,
\end{array}\right.
$$
which implies $I_{0}^{\prime}(u_0)=0$ and $\norm{u_0}^2<1$.

It follows from $2<p<4$, Lemma \ref{lemgnsg} and the H\"older inequality that, for all $u \in Y$,
\begin{equation}\label{eqho2}
\begin{aligned}
        \int_{\K}\abs{u}^p\, dx &\leq C_{p,\K}\norm{u}^{p-2}\norm{u}_2^2 \\
    &\leq C_{p,\K}\norm{u}^{p-2}\norm{u}_2^{4-p}\norm{u}_2^{p-2}\\
    &\leq \frac{C_{p,\K}}{m^\frac{4-p}{2}}\norm{u}^2\norm{u}_2^{p-2}.
\end{aligned}
\end{equation}
Then, using the fact that $\langle I_{0}^{\prime}(u_0), u^+_0 - u^-_0\rangle = 0$, and applying \eqref{eqho2} and the H\"older inequality, we obtain the following
\begin{equation}\label{eqi+-}
    \begin{aligned}
\norm{u_{0}}^{2} & = a\int_{\K}\abs{u_{0}}^{p-2} u_{0}(u_{0}^{+}-u_{0}^{-}) \,dx\\
& \leq a \int_{\K}\abs{u_{0}}^{p-1}(\abs{u_{0}^{+}}+\abs{u_{0}^{-}}) \,dx \\
& \leq a (\int_{\K}\abs{u_{0}}^p\,dx)^\frac{p-1}{p}\left[(\int_{\K}\abs{u_{0}^{+}}^p \,dx)^\frac{1}{p} + (\int_{\K}\abs{u_{0}^{-}}^p \,dx)^\frac{1}{p}\right]\\
& \leq \frac{C_{p,\K}a}{m^\frac{4-p}{2}}(\norm{u_0}^2\norm{u_0}_2^{p-2})^\frac{p-1}{p}\left[(\norm{u^+_0}^2\norm{u^+_0}_2^{p-2})^\frac{1}{p} + (\norm{u^-_0}^2\norm{u^-_0}_2^{p-2})^\frac{1}{p}\right]\\
& \leq \frac{2C_{p,\K}a}{m^\frac{4-p}{2}}(\norm{u_0}^2\norm{u_0}_2^{p-2})^\frac{p-1}{p}(\norm{u_0}^2\norm{u_0}_2^{p-2})^\frac{1}{p}\\
& \leq \frac{2C_{p,\K}a}{m^\frac{4-p}{2}}\norm{u_0}^2,
\end{aligned}
\end{equation}
which leads to a contradiction  when $a < \frac{m^\frac{4-p}{2}}{2C_{p,\K}} = a_0$. Thus, the proof of Theorem \ref{th2} is complete.
\section{Proof of Theorem \ref{thalla}}\label{sectalla}
As in \cite{Bu}, we can show that, if for each $s \in (-m,m)$, the second case in the proof of Theorem \ref{th1} occurs, then there exists non-trivial $u_{-m}\in \operatorname{dom}(\D)$ such that
\[
\D u_{-m}+mu_{-m}=a\chi_{\K}\abs{u_{-m}}^{p-2} u_{-m}.
\]
However, since the corresponding spectrum results in \cite{Bu} are missing for Dirac operators, we will need Lemma \ref{lemtree} to exclude the existence of $u_{-m}$. This result depends on the topology of $\G$ and its proof utilizes techniques from graph theory.

To prove the main result in this section, we  first introduce some basic definitions for unordered combinatorial graphs, see \cite{Gr,Be,Bon} for more details. In what follows, we will also regard $\K$ as a unordered combinatorial graph.
\begin{definition}\label{defcomg}
A unordered combinatorial graph $G$ is a couple $(V,E)$, where $V$ is a set of vertices and $E$ is a set of edges, that is, $E$ consists of some unordered couples $vw$, where $v,w\in V$.

  Let $G=(V,E)$ be a unordered combinatorial graph.
  We write $v \sim w$ ($v$ is
adjacent to $w$) if $vw \in E$. For each vertex $v\in V$, define its degree $\operatorname{deg}_G(v) = \#\{w \in V: w\sim v\}$.
  A fnite sequence $\{v_k\}_{k=0}^n$ of vertices is called a path if $v_k \sim v_{k+1}$ for all $k = 0,1,...,n$. The number $n$ of edges in the path is referred to as the length of the path. A simple path is a path in which all vertices are distinct. A closed path is a path $\{v_k\}_{k=0}^n$ such that $v_0=v_n$. A closed path is a simple cycle if it does not pass through the same edge or vertex more than once.

  A unordered combinatorial graph $G$ is called connected if, for any two vertices $v,w\in V$, there is a path connecting $v$ and $w$, that is, a path $\{v_k\}_{k=0}^n$ such that $v_0 = v$ and $v_n = w$. A unordered connected combinatorial graph without simple cycles is called a tree. A leaf of a tree is a vertex of degree $1$.

  If a unordered combinatorial graph $G$ is connected, then define the graph distance $d_G(v,w)$ between any two distinct vertices $v,w$ as follows: if $v \not = w$ then $d_G(v,w)$ is the minimal length of a path that connects $v$ and $w$, and if $v = w$ then $d_G(v,w)=0$.

\end{definition}
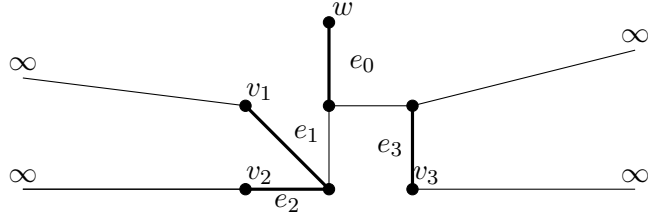
\begin{figure}[H]
	\begin{tikzpicture}[xscale= 0.37,yscale=0.37]
		\draw[very thick] (-14,6)--(-11,3) ; \draw (0,8)--(-8,6);
		\draw  (-11,6)--(-8,6); \draw (-22,3)--(-14,3);
        \draw [very thick] (-14,3)--(-11,3);
				\draw (0,3)--(-8,3);\draw (-22,3)--(-14,3);
		\draw [very thick] (-11,9)--(-11,6);\draw (-14,3)--(-11,3);
		\draw   (-11,3)--(-11,6);\draw (-14,6)--(-22,7);
		\draw [very thick] (-8,3)--(-8,6);
		\node at (-14,3) [nodo] {} ; \node at (-14,6) [nodo] {};
		\node at (-11,3) [nodo] {}; \node at (-11,6) [nodo] {} ;
		\node at (-8,3) [nodo] {}; \node at (-8,6) [nodo] {};
		\node at (-11,9) [nodo] {};
  \draw (-22,3.5)  node{$\infty$};  \draw (-22,7.5) node{$\infty$};
  	\draw (0,8.5)  node{$\infty$};
     	\draw (0,3.5)  node{$\infty$};
             	\draw (-10.5,9.5)  node{$w$};\draw (-9.8,7.5)  node{$e_0$};
                             	\draw (-13.5,6.5)  node{$v_1$};\draw (-11.8,5)  node{$e_1$};
             	\draw (-13.5,3.5)  node{$v_2$};\draw (-12.5,2.5)  node{$e_2$};
             	\draw (-7.5,3.5)  node{$v_3$};\draw (-8.8,4.5)  node{$e_3$};

		\end{tikzpicture}
	\caption{$\K$ is a tree with at most one leaf incident with no half-line in $\G$}
\end{figure}
\begin{lemma}\label{lemtree}
    Let $p \geq 2$. Assume that $u \in \operatorname{dom}(\D)$ such that
\[
\D u-\omega u =a\chi_{\K}\abs{u}^{p-2} u
\]
for some $\omega \in \mathbb{R}$. Then
    \begin{enumerate}[label=\rm(\roman*)]
        \item
for any vertex $v \in \mathrm{V}$ and edge $e\succ v$, if $u_f\equiv 0$ for all $f\succ v$ and $f\neq e$, then $u_e\equiv 0$.
\item if $\K$ is a tree with at most one leaf incident with no half-line in $\G$ and $u_{\mathcal{H}} \equiv 0$ on every half-line $\mathcal{H}\in\mathrm{E}$, then $u \equiv 0$.
\end{enumerate}
\end{lemma}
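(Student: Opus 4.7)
My plan is to prove (i) by an ODE uniqueness argument on a single edge, and then deduce (ii) by iteratively applying (i) to the leaves of the tree $\K$.

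For (i), I rewrite the edgewise equation as a first-order ODE system $u_e' = F(x, u_e)$ by multiplying $-\imath\sigma_1 u_e' + m\sigma_3 u_e - \omega u_e = a\chi_\K\abs{u_e}^{p-2}u_e$ by $\imath\sigma_1$ and using $\sigma_1^2 = I$. The hypothesis ``$u_f \equiv 0$ for all $f \succ v$, $f \neq e$'' is informative precisely when $\deg_\G(v) \geq 2$: the continuity condition \eqref{eqdefdom1} gives $u_e^1(v) = u_f^1(v) = 0$, while the Kirchhoff condition \eqref{eqdefdom2} collapses to $u_e^2(v)_\pm = 0$, so $u_e(v) = 0$. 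Since $u_e \in H^1(I_e,\mathbb{C}^2) \hookrightarrow C(I_e,\mathbb{C}^2)$ is bounded, the pointwise estimate $\abs{\abs{u_e}^{p-2}u_e(x)} \leq \norm{u_e}_{L^\infty(I_e)}^{p-2}\abs{u_e(x)}$ yields a linear differential inequality $\abs{u_e'(x)} \leq C\abs{u_e(x)}$, and Gronwall's inequality applied on each side of $v$ gives $u_e \equiv 0$ on $I_e$.

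For (ii), call a vertex $v \in \mathrm{V}(\K)$ \emph{good} if $\deg_\G(v) \geq 2$ and \emph{bad} otherwise; the bad vertices are exactly the leaves of $\K$ incident with no half-line of $\G$, so by assumption there is at most one. I proceed by induction maintaining the invariant ``$u \equiv 0$ on every half-line and on every edge of $\K \setminus T_k$,'' starting from $T_0 := \K$. As long as $T_k$ contains an edge, the tree $T_k$ has at least two leaves, of which at most one is bad, so I can pick a good leaf $v$; then every edge of $\G$ at $v$ other than the unique $T_k$-edge $e$ is either a half-line or lies in $\K \setminus T_k$, and $u \equiv 0$ on it by the invariant. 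Hence (i) applies at $v$ and yields $u_e \equiv 0$; set $T_{k+1} := T_k \setminus \{v,e\}$. Any newly created leaf $w$ of $T_{k+1}$ satisfies $\deg_{T_k}(w) = 2$, hence $\deg_\K(w) \geq 2$, hence $\deg_\G(w) \geq 2$, so the ``at most one bad leaf'' property persists. The induction terminates with $T_k$ reduced to (at most) a single isolated vertex, giving $u \equiv 0$ on all of $\G$.

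The main subtlety lies in the uniqueness step in (i), since $u \mapsto \abs{u}^{p-2}u$ is merely Hölder continuous at the origin when $p \in (2,3)$ and the classical Cauchy–Lipschitz theorem does not apply directly at $u = 0$; I bypass this by using the $L^\infty$ bound on $u_e$ to linearize the estimate, reducing uniqueness to Gronwall's inequality. A secondary bookkeeping point in (ii) is that pruning preserves the ``at most one bad leaf'' condition, which follows because every new leaf created by removing $\{v,e\}$ was an internal vertex of $\K$, hence good.
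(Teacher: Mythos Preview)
Your proof is correct and follows the same two-step outline as the paper: for (i), derive $u_e(v)=0$ from the vertex conditions \eqref{eqdefdom1}--\eqref{eqdefdom2} and invoke ODE uniqueness on the single edge; for (ii), exploit the tree structure of $\K$ to propagate vanishing inward. The execution differs in two mild respects. For (i), the paper simply cites the standard existence--uniqueness theorem from Perko, which presupposes a $C^1$ vector field and is thus not literally applicable at $u=0$ when $2\le p<3$; your Gronwall argument via the $L^\infty$ linearization $\bigl|\,|u_e|^{p-2}u_e\,\bigr|\le\|u_e\|_{L^\infty}^{p-2}|u_e|$ handles this range rigorously and is the cleaner justification. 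For (ii), the paper argues by contradiction---letting $S$ be the set of vertices at which some incident $u_e\not\equiv 0$, it picks $w\in S$ at maximal $\K$-graph distance from the exceptional leaf $v_0$ and observes that all neighbours of $w$ off the path to $v_0$ lie outside $S$, so (i) forces $w\notin S$---whereas you run an inductive leaf-pruning. These are standard dual tree arguments, and your bookkeeping on the persistence of ``at most one bad leaf'' under pruning (new leaves had $\deg_{T_k}\ge 2$, hence $\deg_\G\ge 2$) is correct.
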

\begin{proof}(i)
By \cite[Remark 6 in Chapter 8]{Ha}, we obtain $u_e\in C^1(I_e,\mathbb{C}^2)$ for all $e \in \mathrm{E}$. Using the existence and uniqueness theorem (see, e.g., \cite[Section 2.2]{Pe}), we know that for any bounded edges $e \in \mathrm{E}$, if  $u_e(0)=0$ or $u_e(\ell_e)=0$, then $u_e\equiv 0$ on $e$. Hence (i) follows from \eqref{eqdefdom1} and \eqref{eqdefdom2}.

(ii) Since $\G$ is connected, it is easy to see that the set of vertices of $\K$ is exactly $\mathrm{V}$. Without loss of generality, we can assume that there exists exactly one leaf of $\K$ incident with no half-line in $\G$ and denote it by $v_0$. Denote the only bounded edge incedent with $v_0$ by $e_0$.
Let the other leaves of $\K$ be denoted by $v_1,v_2,...,v_N$ and the edge of $\K$ incident with $v_i$ be denoted by $e_i$ for $i=1,2,..,N$. Since $u_{\mathcal{H}} \equiv 0$ on every half-line $\mathcal{H}\in\mathrm{E}$, by (i),
we see $u_{e_i}\equiv 0$  for all $i=1,2,..,N$.

 Argue by contradiction, suppose that $S:=\{ v \in \mathrm{V}: u_e\not\equiv 0 \text{ for some }e\succ v\} \neq \emptyset$. It is clear that $v_i\not\in S$ for all $i=1,2,..,N$ and it follows $S\cap \{v\in\mathrm{V}:\operatorname{deg}_\K(v) = 1\} \subset \{v_0\}$. We show that $S\neq \{v_0\}$, otherwise $u_{e_0} \not \equiv 0$, but, since there exists $v \in \mathrm{V}$ such that $v\not \in S$ and $e_0\succ v$ in $\G$, we know $u_{e_0}\equiv 0$, which is a contradiction.  Thus, $S\backslash \{v\in\mathrm{V}:\operatorname{deg}_\K(v) = 1\} \neq \emptyset$ and it follows $\max_{v\in S}\operatorname{deg}_\K(v)\geq 2$. Then there exists an vertex $w \in S$ such that
 \begin{equation}\label{eqtree}
     d_\K(w,v_0)=\max_{v\in S}d_\K(v,v_0)=\max_{v\in S, \operatorname{deg}_\K(v)\geq2}d_\K(v,v_0) >0.
 \end{equation}
 Denote the vertices adjacent to $w$ by $w_1,w_2,..,w_d$, where $d:=\operatorname{deg}_\K(w)\geq2$. Since $\K$ is a tree, we know that any two vertices of $\K$ are connected by exactly one simple path in $\K$(see, e.g., \cite[Proposition 4.1]{Bon}). Denote the simple path which connects $v_0$ and $w$ by $P_{0}$. Without loss of generality, we can assume that $w_1$ lies on $P_0$. Thus,  $d_\K(w_i,v_0)= d_\K(w,v_0)+1$ for $i=2,...,d$. Using \eqref{eqtree}, we see that $w_i\not \in S$ for all $i=2,...,d$. Therefore, by(i), we obtain $w \not \in S$, which is a contradiction and this completes the proof.


\end{proof}
It is the position to provide the proof of Theorem \ref{thalla}.
\begin{proof}[Proof of Theorem \ref{thalla}]
Argue by contradiction, suppose that, for each $s \in (-m,m)$, the second case in the proof of Theorem \ref{th1} occurs, i.e., for any $s \in (-m,m)$, there exists a non-trivial $u_{s} \in \operatorname{dom}(\D)$ such that
$$
\left\{\begin{array}{l}
\D u_s-su_{s}=a\chi_{\K}\abs{u_{s}}^{p-2} u_{s}, \\
\int_{\K}\abs{u_{s}}^{2} \,dx<1,
\end{array}\right.
$$
and
\begin{equation}\label{eqenergys}
    0<\frac{1}{2}((\D-s) u_{s}, u_{s})_{2}-\Psi(u_{s}) = c_{s,\infty}.
\end{equation}
It is clear that, for any $s \in (-m,0)$, we have $ c_{s,\infty} \geq c_{0,\infty}= c_{\infty}>0$. Moreover, similar to the argument of Lemma \ref{lemcinsup}, we obtain that, for any $s \in (-m,m)$, $ c_{s,\infty}<\frac{m-s}{2}$. Thus, for any $s \in (-m,0)$,
\begin{equation}\label{eqcsinfty}
    m> \frac{m-s}{2} > c_{s,\infty} \geq c_{\infty}>0.
\end{equation}
It follows from $\langle I_{s}^{\prime}(u_{s}), u_{s}\rangle = 0$, \eqref{eqenergys} and \eqref{eqcsinfty} that
\begin{equation}\label{equsp}
    m\geq \frac{a(p-2)}{2p}\int_{\K}\abs{u_{s}}^p\, dx   \geq c_{\infty}>0.
\end{equation}
Since $\langle I_{{s}}^{\prime}(u_{s}), u^+_{s}\rangle = 0$, by Lemma \ref{lemgnsg}, \eqref{equsp} and the H\"older inequality, we have
\begin{equation*}
    \begin{aligned}
    \norm{u^+_{s}}^2  &\leq a\int_{\K}\abs{u_{{s}}}^{p-1}\abs{u_{{s}}^+}\, dx +\abs{s}\norm{u^+_{s}}^2_2 \\
    &\leq a(\int_{\K}\abs{u_{{s}}}^p\, dx)^\frac{p-1}{p}(\int_{\K}\abs{u_{{s}}^+}^p\, dx)^\frac{1}{p}+\abs{s} \\
    &\leq a\left[\frac{2mp }{a(p-2)}\right]^\frac{p-1}{p}\left(\frac{C_{p,\K}}{m}\right)^\frac{1}{p}\norm{u^+_{s}}+\abs{s}\\
    &= (C_{p,\K}a)^\frac{1}{p} m^\frac{p-2}{p}\left(\frac{2p }{p-2}\right)^\frac{p-1}{p}\norm{u^+_{s}}+\abs{s},
    \end{aligned}
\end{equation*}
and thus, by $\frac{1}{2}\norm{u^+_s}^2 - \frac{1}{2}\norm{u^-_s}^2 \geq I_s(u_s)+\frac{s}{2}\norm{u_{s}}^2_2 > -\frac{\abs{s}}{2}$, we know $\{u_s\}$ is bounded in $Y$. Let $s_n\to -m$ as $n\to \infty$. Then, it follows from \eqref{equsp} that, up to a subsequence, $u_{s_n} \rightharpoonup u_{-m}\not\equiv0$ in $Y$ and $u_{s_n} \to u_{-m}$ in $L^p(\K,\mathbb{C}^2)$. Since the embedding $ Y \hookrightarrow L^p(\K,\mathbb{C}^2)$ is compact, we can verify that, for any $v \in Y$,
\begin{equation}
    \int_{\K} (\abs{u_{s_n}}^{p-2} u_{s_n}v-\abs{u_{-m}}^{p-2} u_{-m}v) \, dx \to 0,\,\,\text{as}\,\, n \rightarrow \infty,
\end{equation}
and thus $I'_{-m}(u)=0$. By a similar argument as in  the
proof of proposition 3.1 in \cite{Bo}, we know $u_{-m}\in \operatorname{dom}(\D)$ solves the equation
\[
\D u_{-m}+mu_{-m}=a\chi_{\K}\abs{u_{-m}}^{p-2} u_{-m}.
\]
Moreover, by \cite[Remark 6 in Chapter 8]{Ha}, we get $u_e\in C^1(I_e,\mathbb{C}^2)$ for all $e \in \mathrm{E}$.
Using \eqref{eqdefde}, we see $u_{\mathcal{H}} \equiv 0$ on every half-line $\mathcal{H}\in\mathrm{E}$. It follows from Lemma \ref{lemtree}(ii) that $u\equiv 0$, which is a contradiction. Hence, for some $s \in (-m,m)$, the first case in the proof of Theorem \ref{th1} occurs, i.e., there exist a non-trivial $u_{s} \in \operatorname{dom}(\D)$ and $\omega_{s} \in[0, m-s)$ such that
$$
\left\{\begin{array}{l}
\D u_s-su_{s}-\omega_{s} u_{s}=a\chi_{\K}\abs{u_{s}}^{p-2} u_{s}, \\
\int_{\K}\abs{u_{s}}^{2} \,dx=1,
\end{array}\right.
$$
and
$$
0<\frac{1}{2}((\D-s) u_{s}, u_{s})_{2}-\Psi(u_{s}) = c_{s,\infty}.
$$
\end{proof}
\begin{remark}
Assume that $\K$ is a tree with at most one leaf incident with no half-line in $\G$. Define $Q:\G \to \mathbb{C}^2$ such that $Q\in C(I_e,\mathbb{C}^2)$ for all $e\in\mathrm{E}$ and $Q_{\mathcal{H}} \equiv 0$ on every half-line $\mathcal{H}\in\mathrm{E}$. Then, by applying a similar proof as above, one can show that $\pm m$ are not the eigenvalue of the operator $\D-Q$ defined on the domain $\operatorname{dom}(\D)$.
\end{remark}
\section{Proof of Theorem \ref{th3}}\label{sect5}
 Let $p \geq 4$. For $s \in [\omega_*,m)$, if $c_{s,\infty} < \frac{m-s}{2}$, then the proof of Theorem \ref{th3} follows exactly the same  reasoning as that of Theorem \ref{th1}  for $s\in [\omega_*,m)$. For brevity, we omit the details and proceed to complete the proof of Theorem \ref{th3} with the following essential estimate for $c_{s,\infty}$.
\begin{lemma}\label{lemcinsup}
    Let $p \geq 4$. Then, for all $\omega_* \in (-m,m)$, there exists $a^*_{\omega_*}$ such that, for all $a>a^*_{\omega_*}$, we have $c_{s,\infty} < \frac{m-s}{2}$ for all $s\in [\omega_*,m)$.
\end{lemma}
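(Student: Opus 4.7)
I plan to adapt the test-function analysis of Lemma \ref{lemcin} to the shifted operator $\D - sI$, uniformly in $s \in [\omega_*, m)$. The structural observation is that, for $s \in (-m, m)$, the positive/negative spectral subspaces $Y^{\pm}$ of $\D - sI$ coincide with those of $\D$ and the shifted forms satisfy $\|u\|_s^2 = \|u\|^2 \mp s \|u\|_2^2$ on $Y^\pm$; equivalently, $I_{s,0} = I_0 - (s/2)\|\cdot\|_2^2$. The analog of \eqref{eqcssup} then gives $c_{s,\infty} \le \sup_{t \in [0,1)} J_s(tv_b)$ for the test function $v_b := \varphi_b^+ / \|\varphi_b^+\|_2$ introduced in Lemma \ref{lemcin}, so the task reduces to bounding this supremum by something strictly below $(m-s)/2$, uniformly in $s$.

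I would then establish two estimates on $g_s(t) := J_s(tv_b)$. Rerunning the computation of Lemma \ref{lemcin} for the shifted norms (using the convexity inequality $\Psi(v_b + h_s(v_b)) \ge 2^{1-p}\Psi(v_b) - \Psi(h_s(v_b))$, the maximality bound $\|h_s(v_b)\|_s^2 \le 2\Psi(v_b)$, and Gagliardo-Nirenberg on $\Psi(h_s(v_b))$) yields the endpoint estimate
\[
J_s(v_b) \le \frac{m-s}{2} + C_1 b^2 - C_2 a b^{p/2},
\]
with $C_1, C_2 > 0$ independent of $s \in [\omega_*, m)$. Since $g_s$ starts from $0$, is initially positive, but may dip below $0$ at $t = 1$ when $a$ is large, I must also control any interior critical point $t_* \in (0,1)$. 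At such a $t_*$, $\langle J_s'(t_*v_b), t_*v_b\rangle = 0$ gives $a \int_\K |t_*v_b + h_s(t_*v_b)|^p = t_*^2 (\|v_b\|^2 - s) - \|h_s(t_*v_b)\|_s^2$, whence
\[
J_s(t_*v_b) = \tfrac{a(p-2)}{2p} \int_\K |t_*v_b + h_s(t_*v_b)|^p \le \tfrac{(p-2)\,t_*^2(\|v_b\|^2 - s)}{2p}.
\]
Combining this with the convexity lower bound $\int_\K|t_*v_b + h_s|^p \ge 2^{1-p}t_*^p \int_\K|v_b|^p - \int_\K|h_s|^p$ and the Gagliardo-Nirenberg control $\int_\K|h_s|^p \lesssim \|h_s\|_s^p \le (2\Psi(t_*v_b))^{p/2}$ extracts $t_*^{p-2} \lesssim (\|v_b\|^2 - s)/(ab^{p/2})$, and so (using $\|v_b\|^2 \le m + O(b^2)$) the sharper interior estimate
\[
J_s(t_*v_b) \le C_3 \frac{(m - s)^{p/(p-2)}}{(ab^{p/2})^{2/(p-2)}}.
\]

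The threshold $a^*_{\omega_*}$ is then chosen so both estimates are strictly below $(m-s)/2$, uniformly in $s$. For $p = 4$ the endpoint inequality reduces to $a > C_1/C_2$ (independent of $b$); setting $b(s) := \sqrt{m(m-s)/3}$ makes the correction $\epsilon_{b(s)} = (m-s)/4$ and the interior estimate gives $J_s(t_*v_b) \le 5(m-s)/16 < (m-s)/2$, so $a^*_{\omega_*} = C_1/C_2$ suffices. For $p > 4$ I would scale $b = k a^{-2/p}$ with $k$ depending on $\omega_*$, so that $a b^{p/2} = k^{p/2}$ exceeds the constant needed by the interior bound, while $C_1 b^2 = C_1 k^2 a^{-4/p} \to 0$ is dominated by $C_2 k^{p/2}$ as $a \to \infty$ (endpoint); the resulting $a^*_{\omega_*}$ is the threshold above which both conditions hold. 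In either case, $\sup_{t \in [0,1]} J_s(tv_b) < (m-s)/2$ uniformly in $s \in [\omega_*, m)$, yielding $c_{s,\infty} < (m-s)/2$.

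The main obstacle is precisely the uniformity as $s \to m^-$: fixing a single test function and taking $a$ large does not succeed, since $(m-s)/2$ shrinks to zero. This is resolved by the sharper interior-critical-point estimate, whose $(m-s)^{p/(p-2)}$ scaling (exponent strictly greater than $1$) beats $(m-s)/2$ automatically for $s$ close to $m$, while for $s$ bounded away from $m$ the endpoint estimate, with its $s$-independent constants, takes over.
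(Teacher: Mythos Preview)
Your overall strategy—bounding $\sup_{t\in[0,1)} J_s(tv_b)$ via the test function of Lemma~\ref{lemcin}—matches the paper, but you diverge at the crucial point and introduce a gap. The paper does \emph{not} analyze interior critical points at all: after establishing the endpoint bound $J_s(v_b)\le\frac{m-s}{2}+C_1b^2-C_2ab^{p/2}$, it shows (exactly as in Lemma~\ref{lemcin}) that $g_s'(t)\ge t\bigl(C_1b^{-1}-C_2ab^{(p-2)/2}\bigr)$, a lower bound \emph{independent of $s$}. Taking $b=a^{2/(3-p)}$ makes this positive for large $a$, so $g_s$ is increasing on $[0,\|\varphi_b^+\|_2^{-1}]$ for every $s\in[\omega_*,m)$ simultaneously; hence the supremum is at the endpoint, and the correction $C_1b^2-C_2ab^{p/2}<0$ (also $s$-independent) finishes the proof.

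Your alternative—bounding interior maxima via the Nehari identity—has a genuine defect in the regime $s\to m^-$ for $p>4$. Your ``sharper interior estimate'' is stated as $J_s(t_*v_b)\le C_3(m-s)^{p/(p-2)}/(ab^{p/2})^{2/(p-2)}$, but the quantity that actually appears is $(\|v_b\|^2-s)^{p/(p-2)}$, and $\|v_b\|^2-s=(m-s)+\delta_b$ with $\delta_b=\|v_b\|^2-m=O(b^2)>0$. With your choice $b=ka^{-2/p}$ (independent of $s$), $\delta_b$ is fixed once $a$ is fixed, so as $s\to m$ the bound tends to $C_3\,\delta_b^{\,p/(p-2)}/k^{\,p/(p-2)}>0$ while $(m-s)/2\to 0$: the inequality fails. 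Your $p=4$ patch of letting $b$ depend on $s$ does repair this case, but for $p>4$ the analogous scaling forces the endpoint condition $C_1b^2<C_2ab^{p/2}$ to require $a\to\infty$ as $s\to m$, so no single threshold $a^*_{\omega_*}$ works. The monotonicity argument avoids all of this, and your separate interior analysis is both more complicated and—in its present form—incorrect.
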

\begin{proof}
As in Section \ref{subsecper}, for $s\in (-m,m)$, define $J_s: Y^{+} \rightarrow \mathbb{R}$ by
$$
J_s(v)=I_s(v+h_s(v))=\max \{I_{s}(v+w): w \in Y^{-}\},
$$
where $h_s \in C^1\left(Y^{+}, Y^{-}\right)$, and the critical points of $J_s$ and $I_s$ are in one-to-one correspondence via the injective map $u \rightarrow u+h_s(u)$ from $Y^{+}$ into $Y$.

Let $t > 0$ and $s\in [\omega_*,m)$. Repeating the arguments of Lemma \ref{lemcin}, we obtain
\begin{equation}\label{eqhphisup}
\begin{aligned}
        \min\left\{1, 1 + \frac{\omega_*}{m}\right\}\frac{p}{2}\norm{h_s(t\varphi_b)}^{2} &\leq \min\left\{1, 1 + \frac{s}{m}\right\}\frac{p}{2}\norm{h_s(t\varphi_b)}^{2}\\
        &\leq a\int_{\K}\abs{t\varphi_b+h_s(t\varphi_b)}^{p}\,dx + \frac{p}{2}\norm{h_s(t\varphi_b)}^{2} + \frac{sp}{2}\norm{h_s(t\varphi_b)}^{2}_2 \\
        &\leq a\int_{\K}\abs{t\varphi_b}^{p} \,dx = a t^p\abs{\K}
\end{aligned}
\end{equation}
and, for sufficiently small $b$, one has
\begin{equation*}\label{eqjsmallmsup}
            \begin{aligned}
                J_{s}(\frac{\varphi_b^+}{\norm{\varphi_b^+}_2}) =& \frac{\norm{\varphi_b^+}^2}{2\norm{\varphi_b^+}^2_2}-\frac{1}{2}\left\lVert h_s\left(\frac{\varphi_b^+}{\norm{\varphi_b^+}_2}\right) \right\rVert^2 - \frac{s}{2} - \frac{s}{2}\left\lVert h_s\left(\frac{\varphi_b^+}{\norm{\varphi_b^+}_2}\right) \right\rVert_2^2\\
                &- \Psi\left(\frac{\varphi_b^+}{\norm{\varphi_b^+}_2}+h_s\left(\frac{\varphi_b^+}{\norm{\varphi_b^+}_2}\right)\right)\\
                \leq& \frac{m-s}{2}+ C_1b^2 - C_2ab^\frac{p}{2} \\&- \frac{1}{2}\left\lVert h_s\left(\frac{\varphi_b^+}{\norm{\varphi_b^+}_2}\right)\right\rVert^2\left(\min\left\{1, 1 + \frac{\omega_*}{m}\right\} - C_3 \left(\min\left\{1, 1 + \frac{\omega_*}{m}\right\}\right)^\frac{2-p}{2}a^\frac{p}{2}b^\frac{p(p-2)}{4}\right).
    \end{aligned}
    \end{equation*}
Define $g_s(t):=J_s(t\varphi_b^+)$. Then, from \eqref{eqhphisup} and the definition of $J_s$, for $0 \leq t \leq\norm{\varphi_b^+}_{2} ^{-1}$ and sufficiently small $b$, we have
$$
\begin{aligned}
g_s'(t)  =&\frac{1}{t}\langle J_s^{\prime}(t\varphi_b^+), t\varphi_b^+\rangle=\frac{1}{t}\langle I_s'(t\varphi_b^++h_s(t\varphi_b^+)), t \varphi_b^++h_s^\prime(t\varphi_b^+) t\varphi_b^+\rangle \\
 =&\frac{1}{t}\langle I_s^{\prime}(t\varphi_b^++h_s(t\varphi_b^+)), t\varphi_b^+ + h_s(t\varphi_b^+)\rangle \\
 =&\frac{1}{t}\left[\norm{t\varphi_b^+}^2-\norm{h(t\varphi_b^+)}^2-a\int_{\K} \abs{t\varphi_b^+ + h(t\varphi_b^+)}^p \,dx-s\norm{t\varphi_b^+}^2_2-s\norm{h(t\varphi_b^+)}^2_2\right]\\
 \geq& \frac{1}{t}\left[\norm{t\varphi_b^+}^{2} - s\norm{t\varphi_b^+}^2_2-a\int_{\K}\abs{t\varphi_b^+}^{p}\,dx+ \frac{p-2}{2}\norm{h(t\varphi_b^+)}^{2} + \frac{s(p-2)}{2}\norm{h(t\varphi_b^+)}^{2}_2\right] \\
 \geq& t\norm{\varphi_b^+}^2-at^{p-1}\int_{\K} \abs{\varphi_b^+}^{p} \,dx + \frac{(m+s)(p-2)}{2t}\norm{h(t\varphi_b^+)}^{2}_2\\
 \geq& t\left(\norm{\varphi_b^+}^2-a\norm{\varphi_b^+}_{2}^{2-p}\int_{\K} \abs{\varphi_b^+}^{p} \,dx\right)\\
 \geq& t(C_1b^{-1} - C_2ab^\frac{p-2}{2}).
\end{aligned}
$$
Let $b = a^\frac{2}{3-p}$. Then, since $p \geq 4$, there exists $a_{\omega_*} >0$ such that, for all $a > a_{\omega_*}$, we have $J_s(\frac{\varphi_b^+}{\norm{\varphi_b^+}_2}) < \frac{m-s}{2}$ and $g_s(t)$ is increasing for $0 \leq t \leq\norm{\varphi_b^+}_{2} ^{-1}$. Moreover,
$$
c_{s,\infty} \leq \sup _{0 \leq t \leq\norm{\varphi_b^+}_{2} ^{-1}} J_s(t\varphi_b^+) \leq J_s(\frac{\varphi_b^+}{\norm{\varphi_b^+}_2}) < \frac{m-s}{2}.
$$
This completes the proof of Lemma \ref{lemcinsup}.
\end{proof}
\section{Proof of Theorem \ref{th4}}\label{sect6}
 Let $4 \leq p <6$. In this section, we will replace $\Psi$ in Section \ref{subsecper} with $\Psi_\ell : Y \to \mathbb{R}$ given by
 \[
    \Psi_\ell(u) = \frac{a}{p}\int_{\K\cup\mathcal{H}_\ell}|u|^{p}\,dx.
 \]
 Again, for $s \in [\omega_*,m)$, if $c_{s,\infty} < \frac{m-s}{2}$, the proof of the first part of Theorem \ref{th4} follows exactly the same line as that of Theorem \ref{th1} for $s\in [\omega_*,m)$. For the sake of brevity, we omit the details and only show the essential estimate for $c_{s,\infty}$.
\begin{lemma}\label{lemcin46}
    Let $2 < p <6$. Then, for all $\omega_* \in (-m,m)$ and $a > 0$, there exists $\ell^*(\omega_*,a)$ such that, for all $\ell>\ell^*(\omega_*,a)$, we have $c_{s,\infty} < \frac{m-s}{2}$ for all $s\in [\omega_*,m)$.
\end{lemma}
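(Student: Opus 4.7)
The plan is to push through the blueprint of Lemmas \ref{lemcin} and \ref{lemcinsup} with a test function adapted to the enlarged localization set $\K \cup \mathcal{H}_\ell$, treating $\ell\to\infty$ as the asymptotic parameter. Writing the half-lines of $\G$ as $\mathcal{H} = \h_1, \h_2, \ldots, \h_N$, I would define
\begin{equation*}
\varphi_{b,\ell}^1(x) := \begin{cases} 1 & x \in \K \cup \mathcal{H}_\ell, \\ \max\{0, 1 - b(x-\ell)\} & x \in \mathcal{H} \setminus \mathcal{H}_\ell, \\ \max\{0, 1 - bx\} & x \in \h_i,\; i = 2, \ldots, N, \end{cases}
\end{equation*}
and $\varphi_{b,\ell} := (\varphi_{b,\ell}^1, 0)^T$, so that $\varphi_{b,\ell} \in \operatorname{dom}(\D)$ with $\norm{\varphi_{b,\ell}}_2^2 = \abs{\K} + \ell + N/(3b)$, $\norm{(\varphi_{b,\ell}^1)'}_{L^2(\G)}^2 = Nb$, and $\int_{\K \cup \mathcal{H}_\ell}\abs{\varphi_{b,\ell}}^p\,dx = \abs{\K} + \ell$. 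Exactly as in \eqref{eqphi-}--\eqref{eqphi+}, the identities $(\D - mI)\varphi_{b,\ell} = (0, -\imath(\varphi_{b,\ell}^1)')^T$ and $((\D - mI)\varphi_{b,\ell}, \varphi_{b,\ell})_2 = 0$ yield $\norm{\varphi_{b,\ell}^\pm}^2 \mp m\norm{\varphi_{b,\ell}^\pm}_2^2 \leq Nb/(2m)$.

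Next I would bound $J_s(\varphi_{b,\ell}^+/\norm{\varphi_{b,\ell}^+}_2)$ along the lines of Lemma \ref{lemcinsup}, with $\Psi_\ell$ replacing $\Psi$ and with the coercivity constant $\alpha := \min\{1, 1 + \omega_*/m\} > 0$ tracking the uniformity in $s \in [\omega_*, m)$. Combining the convexity inequality $\abs{v+h}^p \geq 2^{1-p}\abs{v}^p - \abs{h}^p$, the coercive bound $-\tfrac{1}{2}\norm{h_s}^2 - \tfrac{s}{2}\norm{h_s}_2^2 \leq -\tfrac{\alpha}{2}\norm{h_s}^2$, and the Gagliardo-Nirenberg-Sobolev inequality from Lemma \ref{lemgnsg} applied on all of $\G$ (which allows $\Psi_\ell(h_s)\lesssim a\norm{h_s}^p$ to be absorbed into $\tfrac{\alpha}{2}\norm{h_s}^2$ for small $\norm{h_s}$), I expect to arrive at an upper bound of the form
\begin{equation*}
J_s\!\left(\frac{\varphi_{b,\ell}^+}{\norm{\varphi_{b,\ell}^+}_2}\right) \leq \frac{m-s}{2} + \frac{Nb}{4m\,\norm{\varphi_{b,\ell}^+}_2^2} - \frac{2^{1-p}\,a\,(\abs{\K}+\ell)}{p\,\norm{\varphi_{b,\ell}^+}_2^p}\bigl(1 - o_\ell(1)\bigr),
\end{equation*}
valid uniformly in $s \in [\omega_*, m)$ provided $b$ is chosen small enough. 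The monotonicity of $t \mapsto J_s(t\varphi_{b,\ell}^+)$ on $[0, \norm{\varphi_{b,\ell}^+}_2^{-1}]$ should follow by repeating the $g_s'(t)$ computation of Lemma \ref{lemcinsup}, using that the leading term $\norm{\varphi_{b,\ell}^+}^2\sim m\ell$ will dominate $a\,\norm{\varphi_{b,\ell}^+}_2^{2-p}\int\abs{\varphi_{b,\ell}^+}^p\sim a\ell^{(4-p)/2}$ for every $p \in (2,6)$.

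The decisive step is the coupling between $b$ and $\ell$. I propose $b = 1/\ell$, giving $\norm{\varphi_{b,\ell}^+}_2^2 = (1 + N/3)\ell + O(1)$: the positive defect is then of order $\ell^{-2}$, while the negative nonlinear gain is of order $a\,\ell^{1-p/2}$, so the required strict inequality $J_s < (m-s)/2$ reduces to $a\,\ell^{(6-p)/2} \gg 1$, which holds for $\ell \geq \ell^*(\omega_*, a)$ precisely because $p < 6$. Applying \eqref{eqcssup} to the perturbation functional for $\D - sI$ and taking the supremum of $c_{s,r,\mu}$ over $r, \mu$ as in the proof of Theorem \ref{th1} then gives $c_{s,\infty} \leq J_s(\varphi_{b,\ell}^+/\norm{\varphi_{b,\ell}^+}_2) < (m-s)/2$. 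The principal obstacle is twofold. First, the threshold $p < 6$ emerges sharply from this balance: for $p \geq 6$ no power-law pairing $b = b(\ell)$ can make the defect decay faster than the gain, mirroring the $H^{1/2}$-critical threshold on metric graphs. Second, uniformity in $s \in [\omega_*, m)$ has to be verified with care, because $\alpha$ degenerates as $\omega_* \downarrow -m$; however, since both the $O(\ell^{-2})$ defect and the $a\ell^{1-p/2}$ gain are $s$-independent and $\alpha = \alpha(\omega_*)$ is bounded below on $[\omega_*, m)$, the threshold $\ell^*(\omega_*, a)$ can indeed be chosen independently of the particular $s \in [\omega_*, m)$.
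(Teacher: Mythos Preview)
Your approach is correct but differs from the paper's construction. The paper does \emph{not} extend the plateau test function of Lemma~\ref{lemcin} over $\K\cup\mathcal{H}_\ell$; instead it takes a fixed bump $\varphi^1\in C_c^\infty(\mathcal{H})$ supported entirely on the distinguished half-line (and vanishing at the vertex), rescales it to $\varphi_b^1(x)=\varphi^1(bx)$, and sets $\varphi_b=(\varphi_b^1,0)^T$. This gives $\|\varphi_b\|_2^2\sim b^{-1}$, $\|(\varphi_b^1)'\|_2^2\sim b$, and---crucially---once $\ell$ exceeds $b^{-1}$ times the diameter of $\operatorname{supp}\varphi^1$, the localized integral $\int_{\K\cup\mathcal{H}_\ell}|\varphi_b|^p$ equals the full integral $b^{-1}\int_\mathcal{H}|\varphi^1|^p$. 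The resulting defect/gain balance is $C_1b^2$ versus $C_2ab^{p/2-1}$, and the monotonicity condition is $C_1b^{-1}>C_2ab^{(p-4)/2}$; both are satisfied by fixing $b$ small (depending on $a,\omega_*$), after which $\ell^*$ is simply the support radius. Your route keeps the plateau shape of Lemma~\ref{lemcin}, couples $b=1/\ell$, and reads off the same $p<6$ threshold from $\ell^{-2}$ versus $a\ell^{1-p/2}$. Both are valid; the paper's version has the advantage that $b$ and $\ell$ are decoupled (all estimates become $\ell$-independent once the support fits), whereas yours stays closer to the original test function at the cost of tracking two-parameter asymptotics.
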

\begin{proof}
    Let $\varphi^1 \in C_c^\infty(\mathcal{H})$. Define $\varphi_{b}^1: \G \to \mathbb{C}$ as
$$
    \varphi_{b}^1(x)= \begin{cases}\varphi^1(bx) & \text { for }  x\in\mathcal{H} \\0 & \text { for } x \in \G\backslash\mathcal{H}, \end{cases}
$$
and define $\varphi_{b}: \G \to \mathbb{C}^2$ by
$$\varphi_{b} = \binom{\varphi_{b}^1}{0},$$
where we tacitly identified the edge $\mathcal{H}$ with $[0, +\infty)$.
Then, we have $\norm{\left(\varphi_{b}^1\right)^\prime}_{L^2(\G)}^2 = b\norm{\left(\varphi^1\right)^\prime}_{L^2(\mathcal{H})}^2$, $\norm{\varphi_{b}}_2^2 = b^{-1}\norm{\varphi^1}_{L^2(\mathcal{H})}^2$ and $\int_{\K\cup\mathcal{H}_\ell}|\varphi_{b}|^{p}\,dx  \leq b^{-1}\int_{\mathcal{H}}|\varphi^1|^{p}\,dx$. Moreover, for large enough $\ell$, one has
\[
    \int_{\K\cup\mathcal{H}_\ell}|\varphi_{b}|^{p}\,dx = b^{-1}\int_{\mathcal{H}}|\varphi^1|^{p}\,dx.
\]
Repeating the arguments of Lemma \ref{lemcin}, we obtain
\begin{equation*}
  \frac{\norm{\varphi_b^+}^2_2}{\norm{\varphi_b}^2_2} \to 1, \quad \frac{\norm{\varphi_b^+}^2}{\norm{\varphi_b}^2} \to 1 \text{ and } b\int_{\G}|\varphi_{b}^+|^{p}\,dx \to b\int_{\G}\abs{\varphi_b}^{p}\,dx = \int_{\mathcal{H}}|\varphi^1|^{p}\,dx\,\, \text{ as }\,\, b \to 0.
\end{equation*}
Repeating the arguments of Lemma \ref{lemcinsup}, we know,
for sufficiently small $b$ , there exists $\ell^*_b$, such that, for all $\ell > \ell^*_b$,
\begin{equation*}
            \begin{aligned}
                J_{s}(\frac{\varphi_b^+}{\norm{\varphi_b^+}_2}) =& \frac{\norm{\varphi_b^+}^2}{2\norm{\varphi_b^+}^2_2}-\frac{1}{2}\left\lVert h_s\left(\frac{\varphi_b^+}{\norm{\varphi_b^+}_2}\right) \right\rVert^2 - \frac{s}{2} - \frac{s}{2}\left\lVert h_s\left(\frac{\varphi_b^+}{\norm{\varphi_b^+}_2}\right) \right\rVert_2^2\\
                &- \Psi_\ell\left(\frac{\varphi_b^+}{\norm{\varphi_b^+}_2}+h_s\left(\frac{\varphi_b^+}{\norm{\varphi_b^+}_2}\right)\right)\\
                \leq& \frac{m-s}{2}+ C_1b^2 - C_2ab^{\frac{p}{2}-1} \\&- \frac{1}{2}\left\lVert h_s\left(\frac{\varphi_b^+}{\norm{\varphi_b^+}_2}\right)\right\rVert^2\left(\min\left\{1, 1 + \frac{\omega_*}{m}\right\} - C_3 \left(\min\left\{1, 1 + \frac{\omega_*}{m}\right\}\right)^\frac{2-p}{2}a^\frac{p}{2}b^\frac{(p-2)^2}{4}\right)
    \end{aligned}
    \end{equation*}
and
$$
\begin{aligned}
g_s'(t)  =&\frac{1}{t}\langle J_s^{\prime}(t\varphi_b^+), t\varphi_b^+\rangle \\
 \geq& t\left(\norm{\varphi_b^+}^2-a\norm{\varphi_b^+}_{2}^{2-p}\int_{\K\cup\mathcal{H}_\ell} \abs{\varphi_b^+}^{p} \,dx\right)\\
 \geq& t(C_1b^{-1} - C_2ab^\frac{p-4}{2}),
\end{aligned}
$$
where $0 \leq t \leq \norm{\varphi_b^+}_{2}^{-1}$. Fix $b$ small enough. Then, since $2< p <6$, there exists $\ell^*$, such that, for all $\ell>\ell^*$, we have $J_s(\frac{\varphi_b^+}{\norm{\varphi_b^+}_2}) < \frac{m-s}{2}$ and $g_s(t)$ is increasing for $0 \leq t \leq\norm{\varphi_b^+}_{2} ^{-1}$. Moreover,
$$
c_{s,\infty} \leq \sup _{0 \leq t \leq\norm{\varphi_b^+}_{2} ^{-1}} J_s(t\varphi_b^+) \leq J_s(\frac{\varphi_b^+}{\norm{\varphi_b^+}_2}) < \frac{m-s}{2}.
$$
This completes the proof of Lemma \ref{lemcin46}.
\end{proof}
Let $4 \leq p <6$. Then, from Theorem \ref{th31}, for $\omega_*=0$ and fixed $a > 0$, we see that, for any $\ell > \ell^*(0,a)$,\\
\begin{enumerate}[label=\rm(\roman*)]
\item either there exist a non-trivial $u_{0} \in \operatorname{dom}(\D)$ and $\omega_{0} \in[0, m)$ such that
$$
\left\{\begin{array}{l}
\D u_{0}-\omega_{0} u_{0}=a\chi_{\ell}\abs{u_{0}}^{p-2} u_{0},\\
\int_{\G}\left|u_{0}\right|^{2} \,dx = 1,
\end{array}\right.
$$
and
$$
0<\frac{1}{2}(\D u_{0}, u_{0})_{2}-\Psi_\ell(u_{0})=c_{\infty}<\frac{m}{2},
$$
\item or there exists a non-trivial $u_{0} \in \operatorname{dom}(\D)$ such that
$$
\left\{\begin{array}{l}
\D u_{0}=a\chi_{\ell}\abs{u_{0}}^{p-2} u_{0}, \\
\int_{\G}\left|u_{0}\right|^{2} \,dx < 1,
\end{array}\right.
$$
and
$$
0<\frac{1}{2}(\D u_{0}, u_{0})_{2}-\Psi_\ell(u_{0})=c_{\infty}<\frac{m}{2}.
$$
\end{enumerate}
 From the following Lemma, we know if $0<a \leq a_{*,0}$ (as defined in \eqref{eqa*0}), then for all $\ell > \ell^*(0,a)$, the second case cannot occur. Specifically, there exist a non-trivial $u_{0} \in \operatorname{dom}(\D)$ and $\omega_{0} \in[0, m)$ satisfying
$$
\left\{\begin{array}{l}
\D u_{0}-\omega_{0} u_{0}=a\chi_{\ell}\abs{u_{0}}^{p-2} u_{0},\\
\int_{\G}\left|u_{0}\right|^{2} \,dx = 1,
\end{array}\right.
$$
which completes the proof of Theorem \ref{th4}.
\begin{lemma}\label{lempgeq4}
    Let $p \geq 4$. If $0 < a \leq a_{*,0}$, then, for all $\ell \geq 0$, there exists no non-trivial $u_{0} \in \operatorname{dom}(\D)$ such that
    $$
\left\{\begin{array}{l}
\D u_{0}=a\chi_{\ell}\abs{u_{0}}^{p-2} u_{0}, \\
\int_{\G}\left|u_{0}\right|^{2} \,dx < 1,
\end{array}\right.
$$
and
$$
0<\frac{1}{2}(\D u_{0}, u_{0})_{2}-\Psi_\ell(u_{0})\leq\frac{m}{2}.
$$
\end{lemma}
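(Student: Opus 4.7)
The plan is to derive a contradiction by combining the Euler-Lagrange information with the quantitative upper bound on the energy. Assume such a $u_0$ exists and set $Q := \int_{\K\cup\mathcal{H}_\ell}\abs{u_0}^p\,dx$. First I would extract two algebraic identities from $\D u_0 = a\chi_\ell\abs{u_0}^{p-2}u_0$. Testing against $u_0$ in $L^2$, together with the spectral orthogonality $(\D u_0,u_0)_2 = \norm{u_0^+}^2-\norm{u_0^-}^2$, yields $\norm{u_0^+}^2-\norm{u_0^-}^2 = aQ$. Consequently the energy collapses to $\tfrac{a(p-2)}{2p}Q$, so the hypothesis becomes $0 < aQ \le \tfrac{mp}{p-2}$; in particular $u_0^+\not\equiv 0$.

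Next I would test the equation against $u_0^+$, which by the same spectral orthogonality reduces to
\[
\norm{u_0^+}^2 \;\le\; a\!\!\int_{\K\cup\mathcal{H}_\ell}\!\abs{u_0}^{p-1}\abs{u_0^+}\,dx.
\]
H\"older with exponents $(\tfrac{p}{p-1},p)$ followed by the Gagliardo-Nirenberg-Sobolev inequality of Lemma~\ref{lemgnsg} on the whole graph gives
\[
\norm{u_0^+}^2 \;\le\; a\,C_{p,\G}^{1/p}\,Q^{(p-1)/p}\,\norm{u_0^+}^{(p-2)/p}\,\norm{u_0^+}_2^{2/p}.
\]
Since $\norm{u_0^+}_2\le\norm{u_0}_2<1$ by $L^2$-orthogonality of $Y^\pm$, this strengthens to the strict inequality $\norm{u_0^+}^{(p+2)/p} < a\,C_{p,\G}^{1/p}Q^{(p-1)/p}$.

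Raising to the power $\tfrac{2p}{p+2}$ and combining with $aQ\le\norm{u_0^+}^2$, I would obtain $1 < a^{(p-2)/(p+2)}C_{p,\G}^{2/(p+2)}Q^{(p-4)/(p+2)}$. For $p\ge 4$ the exponent of $Q$ is non-negative, so substituting the energy bound $Q\le\tfrac{mp}{a(p-2)}$ and then raising to the power $\tfrac{p+2}{2}$ produces the threshold
\[
a \;>\; C_{p,\G}^{-1}\,m^{(4-p)/2}\!\left(\tfrac{p-2}{p}\right)^{\!(p-4)/2} \;=:\;\widetilde a.
\]
Finally I would verify the elementary inequality $a_{*,0}\le\widetilde a$: after cancellation of common powers the ratio $a_{*,0}/\widetilde a$ reduces to $2^{-p/4}\bigl((p-2)/p\bigr)^{(p-4)/(2(p-2))}$, both factors lying in $(0,1]$ for $p\ge 4$ (with equality at $p=4$). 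Hence the hypothesis $a\le a_{*,0}$ forces $a\le \widetilde a < a$, the desired contradiction.

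The main obstacle will be the delicate bookkeeping of exponents in the chain of inequalities: testing against $u_0^+$ alone (rather than the combination $u_0^+-u_0^-$ used for $2<p<4$ in Theorem~\ref{th2}) is what produces the correct power $Q^{(p-1)/p}$, and only then does substituting the energy bound $aQ\le mp/(p-2)$ recover precisely the threshold constant $a_{*,0}$ defined in~\eqref{eqa*0}. The role of the strict inequality $\norm{u_0}_2<1$ is also crucial so that the chain remains strict and survives the borderline case $p=4$, where $a_{*,0}=\widetilde a/2$ with no help from the energy bound.
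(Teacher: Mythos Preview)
Your argument is correct but differs from the paper's proof. The paper tests the equation against \emph{two} directions: first against $u_0^+$ to obtain an upper bound on $\norm{u_0^+}$ (and hence on $\norm{u_0}$ via $\norm{u_0}<\sqrt{2}\,\norm{u_0^+}$), and then against $u_0^+-u_0^-$, which combined with Lemma~\ref{lemgnsg} yields $\norm{u_0}^2\le 2C_{p,\G}a\,\norm{u_0}^{p-2}\norm{u_0}_2^2$; substituting the bound on $\norm{u_0}$ into the factor $\norm{u_0}^{p-4}$ (using $p\ge4$) then forces $\norm{u_0}_2^2>1$ when $a\le a_{*,0}$. You instead use only the single test against $u_0^+$, keep the factor $\norm{u_0^+}_2^{2/p}$ from the Gagliardo--Nirenberg inequality rather than absorbing it into $\norm{u_0^+}$, and close the chain directly via the mass constraint $\norm{u_0^+}_2<1$ together with the elementary relation $aQ\le\norm{u_0^+}^2$. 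Your route is more economical and in fact delivers the sharper threshold $\widetilde a=C_{p,\G}^{-1}m^{(4-p)/2}\bigl((p-2)/p\bigr)^{(p-4)/2}$, which strictly exceeds $a_{*,0}$ for all $p\ge4$ (by the factor $2^{p/4}\bigl(p/(p-2)\bigr)^{(p-4)/(2(p-2))}\ge 2$); the paper's two-step route has the advantage of mirroring the structure used in the proof of Theorem~\ref{th2} for $2<p<4$.
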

\begin{proof}
Assume that there exists a non-trivial $u_{0} \in \operatorname{dom}(\D)$ such that
    $$
\left\{\begin{array}{l}
\D u_{0}=a\chi_{\ell}\abs{u_{0}}^{p-2} u_{0}, \\
\int_{\G}\left|u_{0}\right|^{2} \,dx < 1,
\end{array}\right.
$$
and
$$
0<\frac{1}{2}(\D u_{0}, u_{0})_{2}-\Psi_\ell(u_{0})\leq\frac{m}{2}.
$$
From $\frac{1}{2}(\D u_{0}, u_{0})_{2}-\Psi_\ell(u_{0})\leq\frac{m}{2}$, we know that
\begin{equation}\label{eqi0m2}
        \frac{1}{2}\norm{u^+_{0}}^2 - \frac{1}{2}\norm{u^-_{0}}^2 - \frac{a}{p}\int_{\K\cup\mathcal{H}_\ell}\abs{u_{0}}^p \leq \frac{m}{2}.
\end{equation}
Then, it follows from $\langle I_{{0}}^{\prime}(u_{0}), u_{0}\rangle = 0$ and \eqref{eqi0m2} that
\begin{equation*}
    \frac{a(p-2)}{2p}\int_{\K\cup\mathcal{H}_\ell}\abs{u_{{0}}}^p\, dx   \leq \frac{m}{2},
\end{equation*}
which implies
\begin{equation}\label{eqintp}
\int_{\K\cup\mathcal{H}_\ell}\abs{u_{{0}}}^p\, dx \leq \frac{mp}{a(p-2)}.
\end{equation}
Using the fact that $\langle I_{{0}}^{\prime}(u_{0}), u^+_{0}\rangle = 0$, and applying \eqref{eqintp}, Lemma \ref{lemgnsg} and the H\"older inequality, we obtain
\begin{equation*}
    \begin{aligned}
    \norm{u^+_{0}}^2 &\leq a\int_{\K\cup\mathcal{H}_\ell}\abs{u_{{0}}}^{p-1}\abs{u_{{0}}^+}\, dx \\
    &\leq a(\int_{\K\cup\mathcal{H}_\ell}\abs{u_{{0}}}^p\, dx)^\frac{p-1}{p}(\int_{\K\cup\mathcal{H}_\ell}\abs{u_{{0}}^+}^p\, dx)^\frac{1}{p} \\
    &\leq a\left[\frac{mp }{a(p-2)}\right]^\frac{p-1}{p}\left(\frac{C_{p,\G}}{m}\right)^\frac{1}{p}\norm{u^+_{0}}\\
    &= (C_{p,\G}a)^\frac{1}{p} m^\frac{p-2}{p}\left(\frac{p }{p-2}\right)^\frac{p-1}{p}\norm{u^+_{0}},
    \end{aligned}
\end{equation*}
and thus, by $\frac{1}{2}\norm{u^+_0}^2 - \frac{1}{2}\norm{u^-_0}^2 \geq I_0(u_0) > 0$, we know
\begin{equation}\label{eqnormm2}
\norm{u_0} < \sqrt{2}\norm{u^+_0} \leq \sqrt{2}(C_{p,\G}a)^\frac{1}{p} m^\frac{p-2}{p}\left(\frac{p}{2p-4}\right)^\frac{p-1}{p}.
\end{equation}
Therefore, similar to the arguments of \eqref{eqi+-}, by Lemma \ref{lemgnsg}, we obtain
\begin{equation}
    \begin{aligned}\label{eqnormu2a}
\norm{u_{{0}}}^2 & \leq a (\int_{\K\cup\mathcal{H}_\ell}\abs{u_{{0}}}^p\, dx)^\frac{p-1}{p}\left[(\int_{\K\cup\mathcal{H}_\ell}\abs{u_{{0}}^+}^p\, dx)^\frac{1}{p} + (\int_{\K\cup\mathcal{H}_\ell}\abs{u_{{0}}^-}^p\, dx)^\frac{1}{p}\right] \\
& \leq C_{p,\G}a(\norm{u_{0}}^{p-2}\norm{u_{0}}_2^2)^\frac{p-1}{p}\left[(\norm{u_{0}^+}^{p-2}\norm{u_{0}^+}_2^2)^\frac{1}{p} + (\norm{u_{0}^-}^{p-2}\norm{u_{0}^-}_2^2)^\frac{1}{p}\right]\\
& \leq 2C_{p,\G}a\norm{u_{0}}^{p-2}\norm{u_{0}}_2^2.
\end{aligned}
\end{equation}
Thus, by \eqref{eqnormm2} and \eqref{eqnormu2a}, we conclude
\[
\begin{aligned}
        \norm{u_{0}}_2^2 &\geq (2C_{p,\G}a\norm{u_{0}}^{p-4})^{-1}\\
        & > 2^{1-\frac{1}{2}p}(C_{p,\G}a)^{-1}(C_{p,\G}a)^\frac{4-p}{p} m^\frac{-p^2+6p-8}{p}\left(\frac{p}{p-2}\right)^\frac{-p^2+5p-4}{p}\\
        & \geq 2^{1-\frac{1}{2}p}a^\frac{4-2p}{p}C_{p,\G}^\frac{4-2p}{p}m^\frac{-p^2+6p-8}{p}\left(\frac{p}{p-2}\right)^\frac{-p^2+5p-4}{p},
\end{aligned}
\]
which leads to a contradiction when $a \leq 2^{-\frac{1}{4}p}C_{p,\G}^{-1}m^\frac{4-p}{2}\left(\frac{p}{p-2}\right)^\frac{-p^2+5p-4}{2p-4} = a_{*,0}$.
\end{proof}
Now we turn to equation \eqref{eq1} on $\G_\ell$ and provide the proof of Corollary \ref{co1}.
\begin{proof}[Proof of Corollary \ref{co1}]
It suffices to show that if $u$ is a solution of the equation
\[
\D_e u_e - \omega u_e= a\chi_\ell\abs{u_e}^{p-2}u_e \quad \forall e\in \mathrm{E},
\]
 then there exists a solution $u_\ell$ of the equation
\[
\D_{\ell,e} u_{\ell,e} - \omega u_{\ell,e}= a\chi_{\K_\ell}\abs{u_{\ell,e}}^{p-2}u_{\ell,e} \quad \forall e\in \mathrm{E}_\ell,
\]
with $\int_{\G_\ell}\abs{u_\ell}^2\, dx = \int_{\G}\abs{u}^2\, dx$.

Assume that $u$ is a solution of equation
\[
\D_e u_e - \omega u_e= a\chi_\ell\abs{u_e}^{p-2}u_e\quad \forall e\in \mathrm{E}.
\]

Let $\mathcal{H}_\ell$ denote the half-line incident to $v_\ell$ in $G_\ell$, and identity the unbounded edges $\mathcal{H}$ and $\mathcal{H}_\ell$ with $[0,\infty)$, while identifying the bounded edge $v_\mathcal{H}v_\ell$ of $\G_\ell$ with the compact intervals $[0,\ell]$, where $x_{v_\mathcal{H}v_\ell}=0$ at $v_\mathcal{H}$. Define $u_\ell: \G_\ell \to \mathbb{C}^2$ by
$$
    u_\ell(x)= \begin{cases}u_{\mathcal{H}}(x) & \text { for }  x\in [0,\ell] = v_\mathcal{H}v_\ell, \\u_{\mathcal{H}}(x+\ell) & \text { for } x \in [0,+\infty)=\mathcal{H}_\ell,\\u(x)& \text { for } x \in \G_\ell\backslash(v_\mathcal{H}v_\ell \cup \mathcal{H}_\ell), \end{cases}
$$
where $u_{\mathcal{H}}$ is the restriction of $u$ to the edge ${\mathcal{H}}$. Since $u \in \operatorname{dom}(\D)$, it follows that $u_\ell \in \operatorname{dom}(\D_\ell)$ and satisfies
$$\int_{\G_\ell}\abs{u_\ell}^2\, dx = \int_{\G}\abs{u}^2\, dx$$
 and that $u_\ell$ solves the equation
\[
\D_{\ell,e} u_{\ell,e} - \omega u_{\ell,e}= a\chi_{\K_\ell}\abs{u_{\ell,e}}^{p-2}u_{\ell,e} \quad \forall e\in \mathrm{E}_\ell
\]
in the weak sense.
\end{proof}

\section{The negative case}\label{sectnegative}
In this section, we consider the following (NLDE) on $\G$
\begin{equation}
\label{eq-a}
    \D u + \omega u= -a\chi_\K\abs{u}^{p-2}u,
\end{equation}
where $p > 2$, $a > 0$, $\omega\in (-mc^2,mc^2)$ and $\D$ is defined in Definition \ref{defD}.

It is clear that if  $u = (u^1, u^2)^T$ is a solution of equation \eqref{eq-a}, then $(u^2,-u^1)^T$ is a solution of equation
\begin{equation*}
    -\imath c \sigma_1 u_e^{\prime}+m c^2 \sigma_3 u_e - \omega u_e=a\chi_\K\abs{u_e}^{p-2}u_e \quad \forall e \in \mathrm{E}.
\end{equation*}
We note that $(u^2,-u^1)^T$ may not belong to $\operatorname{dom}(\D)$ due to the Kirchhoff-type vertex conditions \eqref{eqdefdom1} and \eqref{eqdefdom2}.
Similar to Theorem \ref{th1}-\ref{th3}, Theorem \ref{th4} and Corollary \ref{co1}, we have the following two results.
\begin{theorem}\label{th-a1}
      Let $\G$ be any noncompact metric graph with a non-empty compact core $\K$. Assume that $\G$ has two different vertices incident with half-lines. Then Theorem \ref{th1}, \ref{th2} and \ref{th3} hold for equation \eqref{eq-a} with $2<p<4$, and Theorem \ref{thalla} hold for equation \eqref{eq-a} with $p > 2$.
\end{theorem}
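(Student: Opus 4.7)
The key observation is that equation \eqref{eq-a} can be rewritten as
\begin{equation*}
(-\D) u - \omega u = a\chi_{\K} \abs{u}^{p-2} u,
\end{equation*}
so it is precisely the original equation \eqref{eq1} with $\D$ replaced by the self-adjoint operator $-\D$. Since $-\D$ has the same form domain $Y$ and the same spectrum $\sigma(-\D) = (-\infty, -mc^2] \cup [mc^2, \infty)$ as $\D$, with its positive and negative spectral subspaces being $Y^-$ and $Y^+$ respectively, the Gagliardo--Nirenberg--Sobolev inequality (Lemma \ref{lemgnsg}), the perturbation functional $I_{r,\mu}$, the reduction map $h_{r,\mu}$, the mountain-pass geometry, the Palais--Smale analysis of Sections \ref{sect3}--\ref{sectalla}, and the shift argument $s \in (-m,m)$ all transfer verbatim after replacing $\D$ by $-\D$ throughout. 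The only genuinely new ingredient is the test function used to estimate the mountain-pass level from above (Lemma \ref{lemcin} and its variant Lemma \ref{lemcinsup}), and this is where the topological hypothesis on $\G$ enters.

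To obtain the analog of Lemma \ref{lemcin} for $-\D$, I would seek a spinor of the form $\tilde\varphi_b = (0,\, \varphi_b^1)^T$, since then
\begin{equation*}
(-\D - mc^2 I)\tilde\varphi_b = \bigl( \imath c(\varphi_b^1)',\; 0 \bigr)^T
\quad\text{and}\quad \bigl((-\D - mc^2 I)\tilde\varphi_b,\, \tilde\varphi_b\bigr)_{2} = 0,
\end{equation*}
which are exactly the relations that powered the original estimate $c_\infty < m/2$. However, because the Kirchhoff-type vertex conditions treat the two spinor components asymmetrically, the membership $\tilde\varphi_b \in \operatorname{dom}(\D)$ now forces $\varphi_b^1$ to satisfy the signed-sum condition \eqref{eqdefdom2} at every vertex rather than the continuity condition \eqref{eqdefdom1}. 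This is where the hypothesis enters: let $v_1 \neq v_2$ be two vertices each incident to a half-line, which I call $h_1$ and $h_2$ respectively. Since $\G$ is connected and a half-line cannot bridge two disconnected pieces of $\K$, the compact core $\K$ is itself connected, so one may pick a simple path $P \subset \K$ from $v_1$ to $v_2$. I would set $\varphi_b^1 \equiv \epsilon_e \in \{\pm 1\}$ on each edge $e$ of $P$, with signs chosen inductively along $P$ so that the Kirchhoff contributions from $P$ cancel at every interior vertex; on $h_1$ (resp.\ $h_2$) I would set $\varphi_b^1(x) = \sigma_i \max\{0,\, 1 - bx\}$ with $\sigma_i \in \{\pm 1\}$ chosen to cancel the residual contribution from $P$ at $v_i$; and $\varphi_b^1 \equiv 0$ on all remaining edges. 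At every other vertex the Kirchhoff sum is trivially zero, so $\tilde\varphi_b \in \operatorname{dom}(\D)$.

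With this construction $(\varphi_b^1)'$ is supported on $h_1 \cup h_2$ with $\norm{(\varphi_b^1)'}_{L^2(\G)}^2 = 2b$, $\norm{\varphi_b^1}_{L^2(\G)}^2 = \abs{P} + \tfrac{2}{3b}$, and $\int_{\K} \abs{\varphi_b^1}^p \, dx \geq \abs{P} > 0$, which are exactly the quantitative properties needed to make the computation of Lemma \ref{lemcin} go through after replacing $\D - m I$ by $-\D - m I$ and $Y^\pm$ by $Y^\mp$. The same test function, plugged into the argument of Lemma \ref{lemcinsup}, delivers the $p \geq 4$ version, so Theorems \ref{th1}, \ref{th2} and \ref{th3} for equation \eqref{eq-a} follow verbatim in their respective ranges of $p$. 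For Theorem \ref{thalla} in the negative setting, the proof strategy of Section \ref{sectalla} is unchanged: the sequence $\{u_s\}$ produced by case (ii) converges weakly as $s \to -m$ to a nontrivial $u_{-m} \in \operatorname{dom}(\D)$ solving $\D u_{-m} - m u_{-m} = -a\chi_{\K} \abs{u_{-m}}^{p-2} u_{-m}$. On each half-line this reduces to the homogeneous system $(\D - m I)u = 0$, whose only $L^2$ solution compatible with the vertex conditions is zero, so $u_{-m}$ vanishes on every half-line. Lemma \ref{lemtree} then applies unchanged---its proof only uses $C^1$-regularity on edges, pointwise ODE uniqueness at vertices, and the tree structure of $\K$, all of which are insensitive to the sign on the right-hand side---delivering the desired contradiction.

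The main technical obstacle is therefore the test-function construction: balancing the Kirchhoff-type signed sums \eqref{eqdefdom2} at every vertex with a spinor that is nontrivial on $\K$ requires exactly two ``anchor points'' at which a half-line can absorb a unit of flux. This is precisely where the hypothesis that $\G$ has two different vertices incident with half-lines is indispensable; everything else in the proof is a formal translation of Sections \ref{sect3}--\ref{sectalla} through the involution $\D \leftrightarrow -\D$.
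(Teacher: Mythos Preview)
Your proposal is correct and follows essentially the same approach as the paper: both swap the roles of $Y^+$ and $Y^-$ (you frame this as working with $-\D$, the paper as exchanging $u^+ \leftrightarrow u^-$), and both build the crucial test spinor $(0,\varphi_b^2)^T$ by threading a path in $\K$ between two vertices incident to half-lines and balancing the signed Kirchhoff condition \eqref{eqdefdom2} with the half-line tails. The only cosmetic difference is that the paper fixes a consistent orientation along the path and sets $\varphi_b^2\equiv 1$ there, whereas you allow arbitrary edge orientations and compensate with the signs $\epsilon_e$; these are equivalent.
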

We define the energy functional $\tilde{I_{\omega}}: Y \to \mathbb{R}$ associated with equation \eqref{eq-a} by
$$
\widetilde{I_{\omega}}(u) := \frac{1}{2}\norm{u^{-}}^{2}-\frac{1}{2}\norm{u^{+}}^{2}-\frac{\omega}{2} \int_{\G}\abs{u}^{2} \,dx-\Psi(u).
$$
Except for Lemma \ref{lemcin}, the proof of Theorem \ref{th-a1} follows exactly the same line as that of Theorem \ref{th1}-\ref{th3} after we exchange the position of $u^+$ with $u^-$.
 Let the two different vertices of $\G$ incident with half-lines be denoted by $v$ and $w$. Since $\G$ is connected, we can denote the simple path connecting $v$ and $w$ by $\{v_k\}_{k=0}^n$, where $v_0=v$ and $v_n=w$. Without loss of generality, we can assume that the terminal vertex of $v_iv_{i+1}$ is the origin vertex of $v_{i+1}v_{i+2}$ for $i=0,1,...,n-2$. Denote the half-line incident at $v$ by $\mathcal{H}_v$ and the half-line incident at $w$ by $\mathcal{H}_w$. Then we can define $\varphi_{b}^2: \G \to \mathbb{C}$ as
$$
    \varphi_{b}^2(x)= \begin{cases}1 & \text { for } x \in v_iv_{i+1},\quad \quad i = 1,2,...,n-1,\\ -\max\{0,1-bx\} & \text { for } x \in \h_v,\\ \max\{0,1-bx\} & \text { for } x \in \h_w,\\0 &\text { else},  \end{cases}
$$
and define the test function $\varphi_{b}: \G \to \mathbb{C}^2$ by $\varphi_{b} = (0,\varphi_{b}^2)^T$.
Then Lemma \ref{lemcin} remains valid after making obvious changes (in particular, $\D-mI$ and $\left(0, -\imath \left(\varphi_{b}^1\right)^\prime\right)^T$ should be replaced by $\D + mI$ and $\left( -\imath \left(\varphi_{b}^2\right)^\prime,0\right)^T$, respectively).
\begin{theorem}\label{th-a2}
      Let $\G$ be any noncompact metric graph with a non-empty compact core $\K$. Then Theorem \ref{th4} and Corollary \ref{co1} hold for equation \eqref{eq-a} with $4\leq p<6$. Moreover, by replacing $a_{*,0}$ with $
\frac{m^\frac{4-p}{2}c^{4-p}}{2C_{p,\G}}
$, Theorem \ref{th4} and Corollary \ref{co1} hold for equation \eqref{eq-a} with $2< p<4$.
\end{theorem}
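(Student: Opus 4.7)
The plan is to adapt the entire argument leading to Theorem \ref{th4} and Corollary \ref{co1}, with the roles of the positive and negative spectral subspaces of $\D$ interchanged. Concretely, for equation \eqref{eq-a} on $\G$ with nonlinearity localized on $\K\cup\mathcal{H}_\ell$, I would work with the dual energy functional
\[
\widetilde{I}_{\omega,\ell}(u) := \tfrac{1}{2}\|u^{-}\|^{2}-\tfrac{1}{2}\|u^{+}\|^{2}-\tfrac{\omega}{2}\int_{\G}|u|^{2}\,dx - \tfrac{a}{p}\int_{\K\cup\mathcal{H}_\ell}|u|^{p}\,dx,
\]
whose critical points are exactly the solutions of \eqref{eq-a} with the desired localization. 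Exchanging $Y^{+}\leftrightarrow Y^{-}$, the Lyapunov--Schmidt reduction of Section \ref{sect2}, the perturbation functional $I_{r,\mu}$, the associated mountain-pass value $c_{s,\infty}$, and the abstract convergence scheme of Theorem \ref{th31} all go through verbatim, because in the abstract setting only the indefinite split matters.

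The only nontrivial modification concerns the upper bound $c_{s,\infty}<(m-s)/2$ of Lemma \ref{lemcin46}. I would pick $\varphi^{2}\in C_{c}^{\infty}(\mathcal{H})$ and take the test spinor $\varphi_{b}=(0,\varphi_{b}^{2})^{T}$ with $\varphi_{b}^{2}(x)=\varphi^{2}(bx)$. The crucial computation becomes
\[
(\D+mc^{2}I)\varphi_{b}=\bigl(-i c\,\varphi_{b}^{2\prime},\,0\bigr)^{T},
\]
which replaces the corresponding identity in Lemma \ref{lemcin}; since $\varphi_{b}^{2}$ is supported in the interior of a single half-line, the Kirchhoff-type conditions \eqref{eqdefdom1}--\eqref{eqdefdom2} hold automatically and no topological hypothesis on $\G$ is needed (this is the reason the extra assumption of Theorem \ref{th-a1} can be dropped here). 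Repeating the estimates of Lemma \ref{lemcin46} with this test function and with $m-s$ in place of $m-s$ on the positive side yields $c_{s,\infty}<(m-s)/2$ for all $p\in(2,6)$ and all $\ell$ large enough, producing the mountain-pass part of Theorem \ref{th4} for both $2<p<4$ and $4\le p<6$.

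Next I would prove the non-existence part that rules out the second alternative. For $4\leq p<6$, the argument of Lemma \ref{lempgeq4} carries over without changes: the estimate uses the full-graph Gagliardo--Nirenberg constant $C_{p,\G}$ because the nonlinearity lives on $\K\cup\mathcal{H}_\ell\subset\G$, so the same threshold $a_{*,0}$ from \eqref{eqa*0} suffices. For $2<p<4$, I would instead mimic the one-step calculation of Theorem \ref{th2}: testing $\widetilde{I}_{0,\ell}'(u_{0})=0$ against $u_{0}^{+}-u_{0}^{-}$, applying H\"older and Lemma \ref{lemgnsg} over $\K\cup\mathcal{H}_\ell$ with $\|u_{0}\|_{2}<1$ gives
\[
\|u_{0}\|^{2}\leq\frac{2\,C_{p,\G}\,a}{m^{(4-p)/2}c^{4-p}}\,\|u_{0}\|^{2},
\]
which contradicts $u_{0}\not\equiv 0$ precisely when $a<\frac{m^{(4-p)/2}c^{4-p}}{2C_{p,\G}}$; this is the replacement threshold stated in Theorem \ref{th-a2}. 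Finally, the analogue of Corollary \ref{co1} is obtained by the same cut-and-paste of the proof of Corollary \ref{co1}: from a solution on $\G$ with nonlinearity on $\K\cup\mathcal{H}_\ell$, one reads off a solution on $\G_\ell$ with nonlinearity on $\K_\ell$ by inserting the new vertex $v_\ell$ at $x_\mathcal{H}=\ell$, which preserves both $\operatorname{dom}(\D_\ell)$-regularity and the $L^{2}$-mass.

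The main obstacle I anticipate is verifying that the Kirchhoff conditions of Definition \ref{defD} remain compatible with the test spinor $(0,\varphi_{b}^{2})^{T}$ and with the orthogonal splitting $Y=Y^{+}\oplus Y^{-}$ associated to $\D$ (rather than $-\D$): one must recompute $\|\varphi_{b}^{\pm}\|^{2}$ and $\|\varphi_{b}^{\pm}\|_{2}^{2}$ using $(\D+mc^{2}I)\varphi_{b}$ in place of $(\D-mc^{2}I)\varphi_{b}$, and check that the signs in the chain of inequalities $\|\varphi_{b}^{+}\|^{2}+mc^{2}\|\varphi_{b}^{+}\|_{2}^{2}\leq|((\D+mc^{2}I)\varphi_{b},\varphi_{b}^{+})_{2}|$ match. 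Beyond this, the rest of the proof is a faithful transcription of Sections \ref{sect3}--\ref{sect6} with $I\rightsquigarrow\widetilde{I}$ and $u^{\pm}\rightsquigarrow u^{\mp}$.
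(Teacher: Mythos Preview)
Your proposal is correct and matches the paper's own proof essentially line by line: the paper also exchanges $u^{+}\leftrightarrow u^{-}$, replaces the test spinor $(\varphi_b^1,0)^T$ of Lemma \ref{lemcin46} by $(0,\varphi_b^2)^T$ with $\varphi_b^2:=\varphi_b^1$ (compactly supported on $\mathcal{H}$, so no vertex hypothesis is needed), and for the non-existence step substitutes $C_{p,\G}$ and $\int_{\K\cup\mathcal{H}_\ell}$ for $C_{p,\K}$ and $\int_{\K}$ in the arguments of Lemma \ref{lempgeq4} (for $4\le p<6$) and of Theorem \ref{th2} (for $2<p<4$). The sign check you flag as the ``main obstacle'' is indeed the only new computation, and it goes through exactly as you outline with $(\D+mc^2 I)\varphi_b$ playing the role of $(\D-mc^2 I)\varphi_b$.
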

The proof of the case $4\leq p<6$ in Theorem \ref{th-a2} follows exactly the same line as that of Theorem \ref{th4} and \ref{co1} after we exchange the position of $u^+$ with $u^-$ and replace $\varphi_{b} = (\varphi_{b}^1,0)^T$ by $\varphi_{b} = (0,\varphi_{b}^2)^T$, where $\varphi_{b}^2:=\varphi_{b}^1$. For the case $2< p<4$, we only need to establish a non-existence result, similar to the proof of Theorema \ref{th2}, with $C_{p,\K}$ and $\int_{\K}|u|^{p}\,dx$ replaced by $C_{p,\G}$ and $\int_{\K\cup\mathcal{H}_\ell}|u|^{p}\,dx$, respectively.

Now, we state an assumption and present the main result in this section.\\
{\rm (A)} $\lambda = -mc^2$ is an eigenvalue of the operator $\D$ (as defined in Definition \ref{defD}).

Set $$
    \widetilde{a_0}= \begin{cases}a_0 & \text { for } p\in(2,4),\\ a_{*,0}C_{p,\G}C_{p,\K}^{-1} & \text { for } p \in [4,+\infty),  \end{cases}
$$
where $a_0$ is defined by \eqref{eqa0} and $a_{*,0}$ is defined by \eqref{eqa*0}.
\begin{theorem}\label{th-a3}
     Let $\G$ be any noncompact metric graph with a non-empty compact core $\K$. Assume that {\rm(A)} holds, and let $p>2$. Then, one of the following two cases holds\\
    \begin{enumerate}[label=\rm(\roman*)]
        \item either there exists a non-trivial $u \in \operatorname{dom}(\D)$ (as defined in Definition \ref{defD}) and $\omega \in (-mc^2, mc^2)$ satisfy
    \begin{equation*}
    \left\{\begin{aligned}
        &\D_e u_e + \omega u_e=-a\chi_\K\abs{u_e}^{p-2}u_e \quad \forall e\in \mathrm{E},\\
        &\int_\G\abs{u}^2\,dx = 1.
    \end{aligned}
    \right.
\end{equation*}
\item  or for all $\omega \in (-mc^2, mc^2)$, there exists a non-trivial $u_\omega \in \operatorname{dom}(\D)$ such that
    \begin{equation*}
    \left\{\begin{aligned}
        &\D_e u_{\omega,e} + \omega u_{\omega,e} = -a\chi_\K\abs{u_{\omega,e}}^{p-2}u_{\omega,e} \quad\forall e\in \mathrm{E},\\
        &\int_\G\abs{u_\omega}^2\,dx < 1.
    \end{aligned}
    \right.
    \end{equation*}
\end{enumerate}
Moreover, if $0<a<\widetilde{a_0}$, then the first case above occurs with a  non-trivial $u \in \operatorname{dom}(\D)$ (as defined in Definition \ref{defD}) and $\omega \in [0, mc^2)$.
That is, $u$ and $\omega$ satisfy
    \begin{equation*}
    \left\{\begin{aligned}
        &\D_e u_e + \omega u_e= -a\chi_\K\abs{u_e}^{p-2}u_e \quad \forall e\in \mathrm{E},\\
        &\int_\G\abs{u}^2\,dx = 1.
    \end{aligned}
    \right.
\end{equation*}
\end{theorem}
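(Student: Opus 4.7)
The plan is to run the negative-case framework of Theorems~\ref{th-a1}--\ref{th-a2} while using the eigenfunction granted by assumption~(A) as the essential test function, and then to exclude case~(ii) for small $a$ via direct non-existence estimates. Set $c=1$ and consider the negative-case functional $\widetilde{I_\omega}(u)=\tfrac12\|u^-\|^2-\tfrac12\|u^+\|^2-\tfrac{\omega}{2}\|u\|_2^2-\Psi(u)$ from Section~\ref{sectnegative}, whose Euler-Lagrange equation is \eqref{eq-a}. Applying the perturbation/reduction scheme of Sections~\ref{sect2}--\ref{sect3} with the roles of $Y^+$ and $Y^-$ exchanged and the operator $\D$ shifted to $\D+sI$ for $s\in(-m,m)$, one obtains a perturbed functional $\widetilde{I_{s,r,\mu}}$, its reduction onto $Y^-$, and the associated mountain pass level $\tilde{c}_{s,r,\mu}$ with supremum $\tilde{c}_{s,\infty}$. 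Provided the uniform bound $\tilde{c}_{s,\infty}<(m-s)/2$ holds for every $s\in(-m,m)$, the two-step limiting argument of Theorem~\ref{th31} (first fix $r,\mu$, then let $r\to\infty$, $\mu\to 0^+$) produces for every $s$ either a normalized solution with frequency $s+\omega_s$, $\omega_s\in[0,m-s)$, or a sub-normalized solution with $\omega_s=0$. As $s$ varies, both $s+\omega_s$ and $s$ cover $(-mc^2,mc^2)$, yielding the announced alternative (i)--(ii).

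The main obstacle is the uniform estimate $\tilde{c}_{s,\infty}<(m-s)/2$, which under~(A) must be established for \emph{every} $p>2$ and \emph{every} $a>0$, not only in a subcritical regime or for large $a$. Assumption~(A) gives a non-trivial $\phi\in\operatorname{dom}(\D)$ with $\D\phi=-m\phi$; since $-m$ lies at the edge of the negative spectrum we have $\phi\in Y^-$ with $\|\phi\|^2=m\|\phi\|_2^2$. An edgewise analysis of $\D\phi=-m\phi$ (which yields $(\phi_e^1)'=0$ and $(\phi_e^2)'=-2im\phi_e^1$) combined with $L^2$-integrability forces $\phi_{\mathcal{H}}\equiv 0$ on every half-line, so $\phi$ is supported on $\K$ with $\int_\K|\phi|^p>0$ for all $p\geq 2$. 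Setting $\psi:=\phi/\|\phi\|_2\in Y^-$, the analog of $c_\infty\leq\sup_{0\leq t<\|v\|_2^{-1}}J(tv)$ reduces the task to
\[
\sup_{0\leq t<1}\widetilde{J_s}(t\psi)<\frac{m-s}{2}.
\]
A direct computation for $w\in Y^+$ gives
\[
\widetilde{I_s}(t\psi+w)=\frac{(m-s)t^2}{2}-\tfrac12\|w\|^2-\tfrac{s}{2}\|w\|_2^2-\tfrac{a}{p}\int_\K|t\psi+w|^p,
\]
and the correction $C(t):=\min_{w\in Y^+}\bigl[\tfrac12\|w\|^2+\tfrac{s}{2}\|w\|_2^2+\tfrac{a}{p}\int_\K|t\psi+w|^p\bigr]$ is nonnegative (using $\|w\|^2\geq m\|w\|_2^2$ and $|s|<m$), continuous in $t$, and strictly positive at $t=1$ since $\psi\not\equiv 0$ on $\K$ while $Y^+\cap Y^-=\{0\}$. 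A continuity lower bound on $C$ in a left neighbourhood of $1$, together with $t^2<1$ on $[0,1)$, yields $\widetilde{J_s}(t\psi)\leq(m-s)t^2/2-C(t)<(m-s)/2$ uniformly on $[0,1)$.

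For the refinement under $a<\widetilde{a_0}$, I argue by contradiction that alternative~(ii) occurs at $s=0$: there exists a non-trivial $u_0\in\operatorname{dom}(\D)$ with $\D u_0=-a\chi_\K|u_0|^{p-2}u_0$, $\|u_0\|_2<1$, and $0<\tfrac12(\D u_0,u_0)_2-\Psi(u_0)\leq\tfrac{m}{2}$. For $2<p<4$, testing $\widetilde{I_0}'(u_0)=0$ against $u_0^--u_0^+$ and combining Lemma~\ref{lemgnsg} on $\K$ with H\"older's inequality reproduces verbatim the computation of Theorem~\ref{th2} (with $u^+$ and $u^-$ swapped), giving $\|u_0\|^2\leq\tfrac{2C_{p,\K}a}{m^{(4-p)/2}}\|u_0\|^2$, which is impossible for $a<a_0=\widetilde{a_0}$. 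For $p\geq 4$, I rerun the proof of Lemma~\ref{lempgeq4} with $\chi_\ell$ replaced by $\chi_\K$ throughout: every Gagliardo-Nirenberg-Sobolev estimate is now taken on $\K$, so $C_{p,\G}$ is systematically replaced by $C_{p,\K}$, and tracing the constants through the same chain of inequalities gives a contradiction whenever $a\leq a_{*,0}C_{p,\G}C_{p,\K}^{-1}=\widetilde{a_0}$. In either regime, case~(ii) at $s=0$ is excluded, so case~(i) at $s=0$ delivers $u\in\operatorname{dom}(\D)$ and $\omega=\omega_0\in[0,mc^2)$ with $\|u\|_2=1$, completing the proof.
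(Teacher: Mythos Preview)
Your proof is correct and follows the same overall architecture as the paper: use the eigenfunction at $\lambda=-m$ as a test function to bound the mountain-pass level, then run the shifted perturbation scheme of Section~\ref{sect3} with $Y^+$ and $Y^-$ interchanged, and finally exclude case~(ii) for small $a$ by the non-existence computations of Theorem~\ref{th2} (for $2<p<4$) and Lemma~\ref{lempgeq4} with $C_{p,\G}$ replaced by $C_{p,\K}$ (for $p\ge 4$).

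The one genuine difference is in how you obtain the key estimate $\tilde{c}_{s,\infty}<(m-s)/2$. The paper (Lemma~\ref{lemcin-a}) proceeds in three stages: it first fixes a \emph{small} parameter $t$ and uses convexity of $\Psi$ together with $\widetilde{h}(t\varphi)\to 0$ to get $\widetilde{J}(t\varphi)<\tfrac{m}{2}\|t\varphi\|_2^2$; it then shows monotonicity of $s\mapsto \widetilde{J}(st\varphi)$ on $[0,1]$; and finally, for $s\in[1,\|t\varphi\|_2^{-1}]$, it invokes the homogeneity inequality $\widetilde{I_0}(su)\le s^2\widetilde{I_0}(u)$ to transfer the bound. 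Your argument bypasses all of this by exploiting the exact identity $\|\psi\|^2=m\|\psi\|_2^2$ to write $\widetilde{J_s}(t\psi)=\tfrac{(m-s)t^2}{2}-C(t)$ and then using only continuity and positivity of the correction $C$ at $t=1$. This is more direct, treats all $s\in(-m,m)$ simultaneously, and avoids the monotonicity and homogeneity steps; the paper's route, on the other hand, stays closer to the template of Lemmas~\ref{lemcin} and~\ref{lemcinsup} and so generalises more readily to test functions that are only approximate eigenfunctions. The only point where you are a bit quick is the assertion $\phi\in Y^-$: this is proved in the paper (Lemma~\ref{lemeg}) via the spectral projection identity $P^+\phi=\mu^{\D}((0,\infty))\phi=0$, and is worth one line of justification.
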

\begin{remark}\label{remarknegative}
    If $\K$ has a simple cycle (as defined in \ref{defcomg}), then $\lambda = -mc^2$ is an eigenvalue of the operator $\D$ (as defined in Definition \ref{defD}). Indeed, let the simple cycle denoted by $\{v_k\}_{k=0}^n$, where $v_0=v_n$. Without loss of generality, we can assume that the terminal vertex of $v_iv_{i+1}$ is the origin vertex of $v_{i+1}v_{i+2}$ for $i=0,1,...,n-2$ and the terminal vertex of $v_{n-1}v_{n}$ is $v_n$. Define $\varphi^2: \G \to \mathbb{C}$ as
    $$
    \varphi^2(x)= \begin{cases}1 & \text { for } x \in v_iv_{i+1},\quad \quad i = 1,2,...,n-1,\\0 &\text { else},  \end{cases}
$$
and define $\varphi: \G \to \mathbb{C}^2$ by $\varphi = (0,\varphi^2)^T$. Then $\varphi$ is the eigenfunction corresponding to the eigenvalue $\lambda = -mc^2$.

If $\lambda = mc^2$ is an eigenvalue of the operator $\D$ (as defined in Definition \ref{defD}), then Theorem \ref{th-a3} remains valid for equation \eqref{eq1}. However, the specific metric graph for which $\lambda = mc^2$ is an eigenvalue of the operator $\D$ (as defined in Definition \ref{defD}) is unknown.

Moreover, if {\rm(A)} is satisfied and $\K$ is a tree with at most one leaf incident with no half-line in $\G$, then Theorem \ref{thalla} remains valid for equation \eqref{eq-a} with $p>2$. However, the metric graph for which both assumptions are  satisfied remains unknow, as the assumption that $\K$ is a tree contradicts the presence of simple cycle.
\end{remark}
As in Section \ref{subsecper}, by exchanging the position of $u^+$ with $u^-$, we define $\widetilde{J}: Y^{-} \rightarrow \mathbb{R}$ by
$$
\widetilde{J}(v)=\widetilde{I_0}(v+\widetilde{h}(v))=\max \{\widetilde{I_0}(v+w): w \in Y^{+}\},
$$
where $\widetilde{h} \in C^1\left(Y^{-}, Y^{+}\right)$ and the critical points of $\widetilde{J}$ and $\widetilde{I_0}$ are in one-to-one correspondence via the injective map $u \rightarrow u+\widetilde{h}(u)$ from $Y^{-}$into $Y$. Define  $\widetilde{I_{r, \mu}}(u):U_\mu \to \mathbb{R}$ by
$$
\widetilde{I_{r, \mu}}(u) := \frac{1}{2}\norm{u^{-}}^{2}-\frac{1}{2}\norm{u^{+}}^{2}-\Psi(u)-H_{r, \mu}(u),
$$ and
$\widetilde{J_{r, \mu}}: Y^- \cap U_\mu \to \mathbb{R}$ by
$$
\widetilde{J_{r, \mu}}(v):=\max \left\{\widetilde{I_{r, \mu}}(v+w): w \in Y^{+},(T_{\mu}(v+w), v+w)_{2}<1\right\}.
$$
Define
$$
\widetilde{c_{\infty}}:=\sup _{ r>1, \mu>0} \widetilde{c_{r, \mu}},
$$
where
$$\widetilde{c_{r, \mu}} := \inf _{\gamma \in \Gamma_{r, \mu}} \max _{t \in[0,1]} \widetilde{J_{r, \mu}}(\gamma(t))>0,$$
$$
\widetilde{\Gamma_{r, \mu}} :=\{\gamma \in C([0,1], Y^{-} \cap U_{\mu}): \gamma(0)=0, \widetilde{J_{ r, \mu}}(\gamma(1))<0\}.
$$
Except for the upper bound of $\widetilde{c_\infty}$, the proof of the first part of Theorem \ref{th-a3} follows exactly the same line as that of Theorem \ref{th1} after we exchange the position of $u^+$ with $u^-$. For the second part of Theorem \ref{th-a3}, we only need to establish a non-existence result as in the proof of Theorem \ref{th2} and Lemma \ref{lempgeq4} ($C_{p,\G}$ and $\int_{\K\cup\mathcal{H}_\ell}|u|^{p}\,dx$ should be replaced by $C_{p,\K}$ and $\int_{\K}|u|^{p}\,dx$, respectively). Thus, to prove Theorem \ref{th-a3}, it suffices to provide the upper bound of $\widetilde{c_\infty}$. In the following two lemmas, we still set $c=1$ for simplicity.
\begin{lemma}\label{lemeg}
    Assume that {\rm(A)} holds. Then there exists $\varphi \in Y^-\backslash\{0\}$ such that
    \begin{equation*}
        \norm{\varphi}^2=m\norm{\varphi}_2^2.
    \end{equation*}
Moreover, $\varphi$ does not vanish identically on $\K$.
\end{lemma}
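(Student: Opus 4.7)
The plan is to extract the eigenfunction guaranteed by assumption~(A) and verify the two claimed properties directly from spectral theory together with a pointwise ODE analysis on half-lines.

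First, since $\lambda=-m$ is assumed to lie in the point spectrum of $\D$, I would pick any non-trivial $\varphi\in\operatorname{dom}(\D)$ with $\D\varphi=-m\varphi$. Because $-m<0$ belongs to the negative part of $\sigma(\D)$, the spectral projector $P^-$ fixes $\varphi$, so $\varphi\in Y^-\setminus\{0\}$. The identity $\|\varphi\|^2=m\|\varphi\|_2^2$ is then immediate from the spectral definition of the form: $\|\varphi\|^2=\bigl\||\D|^{1/2}\varphi\bigr\|_2^2=(|\D|\varphi,\varphi)_2=m(\varphi,\varphi)_2$, using $|\D|\varphi=m\varphi$ on the eigenspace.

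The substantive part is the non-vanishing on $\K$. The plan is to show that any $L^2$ eigenfunction at $-m$ must vanish identically on every half-line, so that if it also vanished on $\K$ it would be zero on all of $\G$, contradicting $\varphi\neq 0$. To implement this, fix a half-line $\mathcal{H}$ identified with $[0,+\infty)$ and write the restriction $\varphi_\mathcal{H}=(u^1,u^2)^T$. The equation $\D\varphi_\mathcal{H}=-m\varphi_\mathcal{H}$ becomes the decoupled system
\begin{equation*}
(u^1)'=0,\qquad (u^2)'=-2im\,u^1,
\end{equation*}
whose general solution is affine in $x$. The only affine functions lying in $L^2([0,+\infty))$ are identically zero, so $\varphi_\mathcal{H}\equiv 0$. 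Applying this to every half-line of $\G$ gives $\varphi\equiv 0$ on the non-compact part, and since $\varphi\not\equiv 0$ by construction, $\varphi$ must be non-trivial on $\K$.

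The main obstacle is a very minor one: the eigenfunction $\varphi$ automatically satisfies the Kirchhoff-type vertex conditions of Definition~\ref{defD}, but I do not need those conditions for the half-line argument, which is purely a statement about $L^2$ solutions of the first-order ODE system on $[0,+\infty)$. The vertex conditions only become relevant if one wants to start from a solution that is \emph{not} a priori an $L^2$ eigenfunction, which is not the case here. Hence no further input beyond assumption~(A) and the explicit form of $\D$ on a half-line is needed.
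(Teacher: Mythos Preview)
Your proof is correct and follows essentially the same approach as the paper: use spectral theory to place the eigenfunction in $Y^-$ and deduce the norm identity, then solve the first-order system on each half-line to see the eigenfunction vanishes there, forcing non-triviality on $\K$. Your spectral argument (that an eigenvector at $-m$ lies in the range of $\mu^{\D}(\{-m\})\subset P^-$) is in fact more direct than the paper's, which reaches the same conclusion via auxiliary cutoff functions $g_n$ of the resolvent type.
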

\begin{proof}
      Denote the set of projections of $L^2(\G, \mathbb{C}^2)$ associated with the spectral measure $\mu_u^{\mathcal{D}}$ by $$\{\mu^{\mathcal{D}}(\triangle) : \triangle\subset \mathbb{R} \text{ is a Borel set}\}.$$
      It is well known that $P^+=\mu^{\mathcal{D}}((0, +\infty))$ and $P^-=\mu^{\mathcal{D}}((-\infty , 0))$.

      Since {\rm(A)} holds, we know that there exists $\varphi \in Y\backslash\{0\}$ such that
    \begin{equation*}
        \D\varphi = -m\varphi,
    \end{equation*}
    hence
    \begin{equation*}
            \norm{\varphi^+}^2 - \norm{\varphi^-}^2 =-m\norm{\varphi}_2^2.
    \end{equation*}
We need to show that $\varphi \in Y^-$. Let
    \[
  g_n(x):=
  \begin{cases}
    \frac{1}{m+x}, \quad  & x \in \sigma(\D)\backslash(-m-\frac{1}{n},-m+\frac{1}{n}),\\
    0,     & x \in (-m-\frac{1}{n},-m+\frac{1}{n}).
\end{cases}
\]
Then, for each $n \in\mathbb{N}$, $\{g_n\}$ is a bounded and Borel measurable function on $\sigma(\D)$\blue{,} and
\[
    g_n(\D)(mI+\D) = \mu^{\mathcal{D}}\left(\mathbb{R}\backslash(-m-\frac{1}{n},-m+\frac{1}{n})\right).
\]
Therefore,
\[
    \mu^{\mathcal{D}}\left(\mathbb{R}\backslash(-m-\frac{1}{n},-m+\frac{1}{n})\right)\varphi = g_n(\D)(mI+\D)\varphi =0,
\]
and thus, for $n$ sufficiently large, we have
\[
    P^+\varphi = \mu^{\mathcal{D}}((0,+\infty))\varphi =  \mu^{\mathcal{D}}((0,+\infty))\mu^{\mathcal{D}}\left(\mathbb{R}\backslash(-m-\frac{1}{n},-m+\frac{1}{n})\right)\varphi = 0.
\]

 Now, we show that $\varphi$ does not vanish identically on $\K$. By \cite[Remark 6 in Chapter 8]{Ha}, we get $u_e\in C^1(I_e,\mathbb{C}^2)$ for all $e \in \mathrm{E}$. Using \eqref{eqdefde}, we see $u_{\mathcal{H}} \equiv 0$ on every half-line $\mathcal{H}\in\mathrm{E}$. Then $\varphi$ does not vanish identically on $\K$, otherwise $\varphi \equiv 0$.
\end{proof}
\begin{lemma}\label{lemcin-a}
    Assume that {\rm(A)} holds. Then $\widetilde{c_\infty} < \frac{m}{2}$.
\end{lemma}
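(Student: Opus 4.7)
The plan is to use the eigenfunction $\varphi$ provided by Lemma \ref{lemeg} as a test direction in the natural counterpart of \eqref{eqcssup},
\[
\widetilde{c_{\infty}}\le \sup_{0\le t<\norm{v}_{2}^{-1}}\widetilde{J}(tv),\qquad v\in Y^{-}\setminus\{0\},
\]
which is obtained by the same argument as \eqref{eqcssup} after exchanging the roles of $Y^{+}$ and $Y^{-}$. The relevant properties of $\varphi$ are that $\varphi\in Y^{-}\setminus\{0\}$, that $\norm{\varphi}^{2}=m\norm{\varphi}_{2}^{2}$, and crucially that $\varphi$ does not vanish identically on $\K$. Setting $v:=\varphi/\norm{\varphi}_{2}$, so that $\norm{v}_{2}=1$ and $\norm{v}^{2}=m$, it therefore suffices to show $\sup_{0\le t<1}\widetilde{J}(tv)<m/2$.

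The core of the argument is the strict inequality
\[
\widetilde{J}(v')<\tfrac{1}{2}\norm{v'}^{2}\quad\text{whenever }v'\in Y^{-}\text{ is not identically zero on }\K.
\]
Indeed, from the definition of $\widetilde{J}$,
\[
\widetilde{J}(v')=\tfrac{1}{2}\norm{v'}^{2}+\max_{w\in Y^{+}}F_{v'}(w),\qquad F_{v'}(w):=-\tfrac{1}{2}\norm{w}^{2}-\tfrac{a}{p}\int_{\K}\abs{v'+w}^{p}\,dx,
\]
and both summands in $F_{v'}$ are non-positive, so the max is $\le 0$, yielding the non-strict version. Let $w^{*}=\widetilde{h}(v')\in Y^{+}$ be the unique maximizer. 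If $w^{*}=0$, then $F_{v'}(w^{*})=-\frac{a}{p}\int_{\K}\abs{v'}^{p}\,dx<0$, since $v'\not\equiv 0$ on $\K$; if $w^{*}\neq 0$, then already $-\frac{1}{2}\norm{w^{*}}^{2}<0$ while the other summand is $\le 0$. Either way, $F_{v'}(w^{*})<0$, which gives the strict bound.

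Applying the strict bound to $v'=tv$ (which is non-zero on $\K$ for every $t>0$, since $v$ is) yields $\widetilde{J}(tv)<mt^{2}/2\le m/2$ for all $t\in(0,1]$, and of course $\widetilde{J}(0)=0$. Because $\widetilde{h}\in C^{1}(Y^{-},Y^{+})$, the map $t\mapsto \widetilde{J}(tv)$ is continuous on the compact interval $[0,1]$ and hence attains its maximum at some $t^{*}\in[0,1]$; in each of the cases $t^{*}=0$, $t^{*}\in(0,1)$, $t^{*}=1$, the maximum is strictly less than $m/2$. Consequently $\widetilde{c_{\infty}}\le\sup_{0\le t<1}\widetilde{J}(tv)<m/2$.

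I do not expect any real obstacle: the delicate balancing of powers of a small parameter $b$ that drove Lemma \ref{lemcin} is not needed here, because $\varphi$ saturates the spectral inequality $\norm{\cdot}^{2}\ge m\norm{\cdot}_{2}^{2}$ exactly. The assumption {\rm(A)} is used precisely to produce such a saturating direction, and the non-vanishing of $\varphi$ on $\K$ (the second assertion of Lemma \ref{lemeg}) is what converts the non-strict spectral bound into the strict estimate on $\widetilde{c_{\infty}}$.
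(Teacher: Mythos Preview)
Your argument is correct and actually more elementary than the paper's. Both proofs use the eigenfunction $\varphi$ from Lemma~\ref{lemeg} together with the analogue of \eqref{eqcssup}, but the way the strict upper bound is extracted differs. The paper first fixes a small parameter $t$, uses convexity of $\Psi$ and Sobolev bounds to show $\widetilde{J}(t\varphi)<\frac{m}{2}\|t\varphi\|_2^2$, then verifies that $s\mapsto\widetilde{J}(st\varphi)$ is increasing on $[0,1]$, and finally handles $s\in[1,\|t\varphi\|_2^{-1}]$ via the scaling inequality $\widetilde{J}(st\varphi)\le s^2\widetilde{J}(t\varphi)$. You bypass all three steps with a single observation: writing $\widetilde{J}(v')=\tfrac12\|v'\|^2-\bigl[\tfrac12\|\widetilde{h}(v')\|^2+\Psi(v'+\widetilde{h}(v'))\bigr]$, the bracketed quantity is strictly positive whenever $v'\not\equiv 0$ on $\K$, by a two-case check on whether $\widetilde{h}(v')$ vanishes. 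Combined with the exact eigenfunction relation $\|tv\|^2=mt^2$ and continuity of $\widetilde{J}$ on the compact interval $[0,1]$, this immediately gives $\sup_{0\le t<1}\widetilde{J}(tv)<m/2$. Your route is shorter and makes transparent why assumption {\rm(A)} (saturating the spectral gap) plus non-vanishing on $\K$ are exactly what is needed; the paper's route, while more involved here, mirrors the structure of Lemma~\ref{lemcin} and Lemma~\ref{lemcinsup} where no exact eigenfunction is available and one genuinely has to balance error terms against the nonlinearity.
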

\begin{proof}
    Let $\varphi$ defined as in Lemma \ref{lemeg}. Since $\widetilde{h} \in C^1(Y^-,Y^+)$ and $\widetilde{h}(0) = 0$, we have $\widetilde{h}(t\varphi) \to 0$ as $t \to 0$. Thus, for $t$ sufficiently small, the Sobolev inequality and convexity of $\Psi$ implies
    \begin{equation}\label{eqjsmallm-a}
            \begin{aligned}
                \widetilde{J}(t\varphi) &= \frac{1}{2}\norm{t\varphi}^2-\frac{1}{2}\norm{\widetilde{h}(t\varphi)}^2 - \Psi(t\varphi+\widetilde{h}(t\varphi))\\
                &\leq \frac{1}{2}\norm{t\varphi}^2-\frac{1}{2}\norm{\widetilde{h}(t\varphi)}^2 - 2^{1-p}\Psi(t\varphi) + \Psi(-\widetilde{h}(t\varphi))\\
                &\leq \frac{m}{2}\norm{t\varphi}_2^2 - 2^{1-p}\Psi(t\varphi) -\frac{1}{2} \norm{\widetilde{h}(t\varphi)}^2 + C\norm{\widetilde{h}(t\varphi)}^p\\
                &< \frac{m}{2}\norm{t\varphi}_2^2.
    \end{aligned}
    \end{equation}
For $s>0$, define $\widetilde{g}(s):=\widetilde{J}(st\varphi)$, then, from the definition of $\widetilde{J}$, we know
$$
\begin{aligned}
\widetilde{g}'(s) & =\frac{1}{s}\langle \widetilde{J}^{\prime}(st\varphi), st\varphi\rangle=\frac{1}{s}\langle \widetilde{I_0}'(st\varphi+\widetilde{h}(st\varphi)), s \varphi+\widetilde{h}'(st\varphi) st\varphi\rangle \\
& =\frac{1}{s}\langle \widetilde{I_0}^{\prime}(st\varphi+\widetilde{h}(st\varphi)), st\varphi + \widetilde{h}(st\varphi)\rangle \\
& =\frac{1}{s}\left[\norm{st\varphi}^2-\norm{\widetilde{h}(st\varphi)}^2-a\int_{\K} \abs{st\varphi + \widetilde{h}(st\varphi)}^p \,dx\right].
\end{aligned}
$$
Remark that, from the definition of $\widetilde{h}$, we have
$$
\widetilde{I_0}(st\varphi+\widetilde{h}(st\varphi)) \geq \widetilde{I_0}(st\varphi),
$$
which implies that
$$
a\int_{\K}\abs{st\varphi+\widetilde{h}(st\varphi)}^{p}\,dx \leq a\int_{\K}\abs{st\varphi}^{p} \,dx-\frac{p}{2}\norm{\widetilde{h}(st\varphi)}^{2}.
$$
Thus,
$$
\begin{aligned}
\widetilde{g}'(s) & \geq \frac{1}{s}\left[\norm{st\varphi}^{2}-a\int_{\K}\abs{st\varphi}^{p}\,dx+\frac{p-2}{2}\norm{\widetilde{h}(st\varphi)}^{2}\right] \\
& \geq s\norm{t\varphi}^2-as^{p-1}\int_{\K} \abs{t\varphi}^{p} \,dx \\
& \geq s\norm{t\varphi}^2-Cas^{p-1}\norm{t\varphi}^{p}
\end{aligned}
$$
which implies that $\widetilde{g}$ is increasing for $0 \leq s \leq 1$ and $t$ small enough. Fix $t$ sufficiently small. Then, for $1 \leq s \leq \norm{t\varphi}_{2}^{-1}$, we obtain that $\Psi(st\varphi+\widetilde{h}(st\varphi)) \geq s^{2} \Psi(t\varphi+s^{-1} \widetilde{h}(st\varphi))$ and then by \eqref{eqjsmallm-a}
$$
\widetilde{J}(st\varphi)=\widetilde{I_{0}}(st\varphi+\widetilde{h}(st\varphi)) \leq s^{2} \widetilde{I_{0}}(t\varphi+s^{-1} \widetilde{h}(st\varphi)) \leq s^{2} \widetilde{J}(t\varphi)<\frac{m s^{2}}{2}\norm{st\varphi}_{2}^{2} \leq \frac{m}{2}.
$$
Hence, by the above estimates, as in \eqref{eqcssup},
$$
\widetilde{c_{\infty}} \leq \sup _{0 \leq s \leq\norm{t\varphi}_{2} ^{-1}} \widetilde{J}(st\varphi)<\frac{m}{2}.
$$ This completes the proof of Lemma \ref{lemcin-a}.
\end{proof}
\appendix \section{}\label{appa}
In the previous discussion, we studied the influence of the parameter $a>0$ on the existence of solutions to the equations \eqref{eq1} and \eqref{eq-a}. However, since $C_{p,\G}$ and $C_{p,\K}$ in Lemma \ref{lemgnsg} depend on $m,c >0$, the influence of $m,c >0$ on the existence of solutions to the equation \eqref{eq1} and \eqref{eq-a} remains unclear. In this appendix, we establish a non-existence result for $2<p<6$ and  investigate the influence of the parameters $m, c>0$ on the existence of normalized solutions to equations \eqref{eq1} and \eqref{eq-a}.

In the following, we abbreviate $\norm{u}_{H^1(\G, \mathbb{C}^2)}$ as $\norm{u}_{H^1}$. The following lemma is an immediate corollary of the Gagliardo-Nirenberg-Sobolev inequality \cite[Theorem 1.1]{Ha2}.
\begin{lemma}\label{lemgnsgh1}
Let $p \geq 2$. Then there exist constants $S_{p,\K}$ and $S_{p,\G}$ that depend only on $p$ and $\G$ such that
\[
    \int_{\K} \abs{u}^p\,dx \leq S_{p,\K} \norm{u}_{H^1}^{\frac{p}{2}-1}\norm{u}_2^{\frac{p}{2}+1}, \quad \forall u \in H^1(\G, \mathbb{C}^2),
\]
\[
    \int_{\G} \abs{u}^p\,dx \leq S_{p,\G} \norm{u}_{H^1}^{\frac{p}{2}-1}\norm{u}_2^{\frac{p}{2}+1}, \quad \forall u \in H^1(\G, \mathbb{C}^2),
\]
and there exist constants $S_{\infty,\K}$ and $S_{\infty,\G}$ that depend only on $\G$ such that
\[
 \norm{u}_{L^\infty(\K, \mathbb{C}^2)} \leq S_{\infty,\K} \norm{u}_{H^1}^\frac{1}{2}\norm{u}_2^\frac{1}{2}, \quad \forall u \in H^1(\G, \mathbb{C}^2),
\]
\[
 \norm{u}_{L^\infty(\G, \mathbb{C}^2)} \leq S_{\infty,\G} \norm{u}_{H^1}^\frac{1}{2}\norm{u}_2^\frac{1}{2}, \quad \forall u \in H^1(\G, \mathbb{C}^2).
\]
\end{lemma}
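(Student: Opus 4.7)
The plan is to reduce the four inequalities to the classical one-dimensional Gagliardo-Nirenberg-Sobolev estimate applied edge by edge, and then assemble the edge-wise bounds by summing (for the $L^p$ cases with $p<\infty$) or by taking a maximum (for the $L^\infty$ cases), using crucially that $\G$ has only finitely many edges.

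First I would fix $p\in[2,\infty]$ and, on each edge $e\in\mathrm{E}$ (whether a bounded interval $I_e=[0,\ell_e]$ or a half-line identified with $[0,+\infty)$), invoke \cite[Theorem 1.1]{Ha2} with $N=1$, $r=0$, $q=p$, $s_1=1$, $s_2=0$, $p_1=p_2=2$, and interpolation parameter $\theta=\tfrac{1}{2}-\tfrac{1}{p}$ (which becomes $\tfrac{1}{2}$ when $p=\infty$). The hypotheses of that theorem are straightforwardly verified and none of the exceptional cases listed there occur for this choice. Applying the inequality componentwise to $u_e=(u_e^1,u_e^2)^T$ yields a constant $c_{p,e}>0$ depending only on $I_e$ and $p$ (not on $u$) such that
\begin{equation*}
\int_{I_e}\abs{u_e}^p\,dx \;\leq\; c_{p,e}\,\norm{u_e}_{H^1(I_e,\mathbb{C}^2)}^{p/2-1}\,\norm{u_e}_{L^2(I_e,\mathbb{C}^2)}^{p/2+1},
\end{equation*}
together with the analogous $L^\infty$ variant with exponent $1/2$ on each factor.

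Next I would use the monotone bounds $\norm{u_e}_{H^1(I_e,\mathbb{C}^2)}\leq\norm{u}_{H^1}$ and $\norm{u_e}_{L^2(I_e,\mathbb{C}^2)}\leq\norm{u}_2$ to replace the edge-wise right-hand side by a quantity independent of $e$. Summing the resulting $L^p$ estimate over the finitely many edges of $\G$ then gives the graph-wise inequality with constant $S_{p,\G}:=\sum_{e\in\mathrm{E}}c_{p,e}$, and restricting the sum to the bounded edges of $\K$ yields $S_{p,\K}:=\sum_{e\in\K}c_{p,e}$. For the $L^\infty$ statement one uses instead that $\norm{u}_{L^\infty(\G,\mathbb{C}^2)}=\max_{e\in\mathrm{E}}\norm{u_e}_{L^\infty(I_e,\mathbb{C}^2)}$ and takes $S_{\infty,\G}:=\max_{e\in\mathrm{E}}c_{\infty,e}$ (and $S_{\infty,\K}:=\max_{e\in\K}c_{\infty,e}$).

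There is no substantive obstacle: once the one-dimensional Gagliardo-Nirenberg inequality on a single interval (bounded, half-line, or full line) is in hand, the argument is pure bookkeeping, and the finiteness of $\mathrm{E}$ trivialises both the sum and the maximum. The only point requiring mild care is verifying that Haroske's theorem, stated on standard domains in $\mathbb{R}^N$, is applicable on each edge of $\G$; this is routine since bounded intervals and half-lines are standard one-dimensional domains (a reflection/extension argument applies if needed), and the resulting edge constants $c_{p,e}$ depend on the geometry of the edge but not on $u$, which is all the statement of the lemma requires.
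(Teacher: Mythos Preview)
Your proposal is correct and matches the paper's approach: the paper simply states that the lemma is an immediate corollary of \cite[Theorem 1.1]{Ha2}, and your edge-by-edge application of that theorem followed by summation (respectively taking a maximum) is exactly the routine verification behind that claim. The only cosmetic point is that the convention in \cite{Ha2} orders the smoothness parameters as $0\le s_1\le s_2$, so your labels $s_1=1$, $s_2=0$ should be swapped (with the interpolation parameter adjusted accordingly), but this does not affect the argument.
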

\begin{lemma}\label{lempin26}
    For $2<p < 4$, if
    \begin{equation}\label{eqapppin24}
         0<a\sqrt2S_{2p-2,\K}\max\{c^{-1},(mc^2)^{-1}\}\leq 1,
    \end{equation}
then there exists no non-trivial $u_{0} \in \operatorname{dom}(\D)$ such that
    $$
\left\{\begin{array}{l}
\D u_{0}=a\chi_{\ell}\abs{u_{0}}^{p-2} u_{0}, \\
\int_{\G}\left|u_{0}\right|^{2} \,dx < 1.
\end{array}\right.
$$
Moreover, for $2< p < 6$, if
    \begin{equation}\label{eqapppin46}
         0<a S_{\infty,\K}^\frac{(p-2)^2}{4}S_{2p-2,\K}^\frac{6-p}{8}\left(\frac{2p}{p-2}\right)^\frac{p-2}{4} \max\{m^\frac{p-4}{2}c^\frac{p-6}{2},(mc^2)^{-1}\}\leq 1,
    \end{equation}
then there exists no non-trivial $u_{0} \in \operatorname{dom}(\D)$ such that
    $$
\left\{\begin{array}{l}
\D u_{0}=a\chi_{\ell}\abs{u_{0}}^{p-2} u_{0}, \\
\int_{\G}\left|u_{0}\right|^{2} \,dx < 1,
\end{array}\right.
$$
and
$$
0<\frac{1}{2}(\D u_{0}, u_{0})_{2}-\Psi_\ell(u_{0})\leq \frac{mc^2}{2}.
$$
\end{lemma}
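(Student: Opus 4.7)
The plan is to prove both non-existence statements by contradiction, paralleling the arguments used for Theorem \ref{th2} and Lemma \ref{lempgeq4}, but now employing the $H^1$-Gagliardo-Nirenberg-Sobolev inequalities of Lemma \ref{lemgnsgh1}, whose constants $S_{2p-2,\K}, S_{\infty,\K}$ depend only on $\G$ (not on $m,c$). The $m, c$ dependence is transferred instead to norm comparisons extracted directly from the equation $\D u_0 = a\chi_{\K}\abs{u_0}^{p-2}u_0$ (or its $\chi_\ell$ analogue).

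First I would test the equation against $u_0^{+}-u_0^{-}$, which, since $(\D u_0^\pm, u_0^\pm)_2 = \pm\|u_0^\pm\|^2$, yields
\[
\norm{u_0}^2 = a\Re\int_{\K}\abs{u_0}^{p-2}u_0\cdot(u_0^{+}-u_0^{-})\,dx \leq a\sqrt{2}\Bigl(\int_{\K}\abs{u_0}^{2p-2}\,dx\Bigr)^{1/2}\norm{u_0}_2,
\]
using Cauchy–Schwarz and the pointwise bound $(\abs{u_0^{+}}+\abs{u_0^{-}})^2\leq 2(\abs{u_0^{+}}^2+\abs{u_0^{-}}^2)$. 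Simultaneously, the $L^2$-norm of the equation combined with the identity $\D^2=-c^2\partial_x^2+m^2c^4 I$ (which is a direct consequence of $\sigma_j^2=I$ and $\sigma_1\sigma_3+\sigma_3\sigma_1=0$) gives
\[
a^2\int_{\K}\abs{u_0}^{2p-2}\,dx=\norm{\D u_0}_2^2=c^2\norm{u_0'}_2^2+m^2c^4\norm{u_0}_2^2\geq \min\{c^2,(mc^2)^2\}\norm{u_0}_{H^1}^2,
\]
so that $\norm{u_0}_{H^1}\leq M\norm{\D u_0}_2$ with $M=\max\{c^{-1},(mc^2)^{-1}\}$.

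For the first assertion ($2<p<4$), I would apply Lemma \ref{lemgnsgh1} to get $\int_{\K}\abs{u_0}^{2p-2}dx\leq S_{2p-2,\K}\norm{u_0}_{H^1}^{p-2}\norm{u_0}_2^{p}$, substitute $\norm{u_0}_{H^1}\leq M\norm{\D u_0}_2$, and use $\norm{\D u_0}_2^2=a^2\int_{\K}\abs{u_0}^{2p-2}dx$ to obtain the closed inequality $\norm{\D u_0}_2^{4-p}\leq a^2 S_{2p-2,\K}M^{p-2}\norm{u_0}_2^{p}$. Feeding this back into the first displayed estimate together with $\norm{u_0}^2\geq mc^2\norm{u_0}_2^2\geq M^{-1}\norm{u_0}_2^2$ (valid because $Mmc^2\geq 1$) and finally using $\norm{u_0}_2<1$ to absorb the remaining positive power of $\norm{u_0}_2$ collapses the chain to a purely parameter-dependent inequality that is violated precisely when \eqref{eqapppin24} holds. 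For the second assertion ($2<p<6$ with the energy bound), the identity $(\D u_0,u_0)_2=a\int_{\K\cup\mathcal{H}_\ell}\abs{u_0}^p$ combined with the hypothesis $(1/2)(\D u_0,u_0)_2-\Psi_\ell(u_0)\leq mc^2/2$ gives the a-priori bound $\int_{\K\cup\mathcal{H}_\ell}\abs{u_0}^p\leq mpc^2/(a(p-2))$, exactly as in the proof of Lemma \ref{lempgeq4}. Then I would replace the $L^{2p-2}$ GNS estimate by the interpolation $\int_{\K\cup\mathcal{H}_\ell}\abs{u_0}^{2p-2}\leq\norm{u_0}_{L^\infty(\K\cup\mathcal{H}_\ell)}^{p-2}\int_{\K\cup\mathcal{H}_\ell}\abs{u_0}^p$, bound $\norm{u_0}_{L^\infty}$ via the $L^\infty$ part of Lemma \ref{lemgnsgh1} and then $\norm{u_0}_{H^1}$ via $M\norm{\D u_0}_2$, and combine with $\norm{u_0}^2\geq mc^2\norm{u_0}_2^2$. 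After clearing fractional powers and using $\norm{u_0}_2<1$, the resulting condition on $a$ matches \eqref{eqapppin46}.

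The main obstacle is the algebraic bookkeeping needed to match the exact form of the stated sufficient conditions, in particular tracking the joint $m, c$-dependence through $\max\{c^{-1},(mc^2)^{-1}\}$ and, in the second part, through the more intricate combination $\max\{m^{(p-4)/2}c^{(p-6)/2},(mc^2)^{-1}\}$. These maxima appear because $\norm{\D u_0}_2$ dominates $c\norm{u_0'}_2$ and $mc^2\norm{u_0}_2$ separately but not their sum $\norm{u_0}_{H^1}$ uniformly, so one has to identify which regime is binding and simultaneously juggle the $L^p$ a-priori bound from the energy constraint, the $L^\infty$-Sobolev estimate, and the $\norm{u_0}_{H^1}$-control through $\D$; the exponent $(6-p)/8$ on $S_{2p-2,\K}$ in \eqref{eqapppin46} is a fingerprint of precisely this balance.
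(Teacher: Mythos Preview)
Your plan has the right ingredients---in particular the key identity $\|\D u_0\|_2^2=c^2\|u_0'\|_2^2+m^2c^4\|u_0\|_2^2$ and the $H^1$-GNS inequalities---but the paper's argument is more direct than what you outline, and your detour through testing against $u_0^+-u_0^-$ is unnecessary and will not recover the stated constants cleanly.

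For the first part, the paper never uses the $Y$-norm or the decomposition $u_0^+-u_0^-$. It simply chains the identity $a^2\int_\K|u_0|^{2p-2}\,dx=\|\D u_0\|_2^2\geq\tfrac12\min\{c^2,m^2c^4\}\|u_0\|_{H^1}^2$ with the GNS bound $\int_\K|u_0|^{2p-2}\leq S_{2p-2,\K}\|u_0\|_{H^1}^{p-2}\|u_0\|_2^p$, divides out $\|u_0\|_{H^1}^{p-2}$, uses $\|u_0\|_{H^1}\geq\|u_0\|_2$ and $\|u_0\|_2<1$, and is done. Your first displayed estimate $\|u_0\|^2\leq a\sqrt2(\int_\K|u_0|^{2p-2})^{1/2}\|u_0\|_2$ combined with $\|u_0\|^2\geq mc^2\|u_0\|_2^2$ only recovers (a weaker version of) $m^2c^4\|u_0\|_2^2\leq\|\D u_0\|_2^2$, which is already contained in the identity above; routing through it introduces an extra factor $2^{(4-p)/4}$ and does not land on \eqref{eqapppin24}.

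For the second part, the paper uses \emph{both} the $L^\infty$-interpolation \emph{and} the $L^{2p-2}$-GNS, not one in place of the other: the interpolation $\int_\K|u_0|^{2p-2}\leq\|u_0\|_{L^\infty(\K)}^{p-2}\int_\K|u_0|^p$ together with the $L^\infty$-GNS and the energy bound yields an a-priori bound on $\|u_0\|_{H^1}^{(6-p)/2}$; then this $H^1$-bound is fed into $m^2c^4\|u_0\|_2^2\leq a^2\int_\K|u_0|^{2p-2}\leq a^2S_{2p-2,\K}\|u_0\|_{H^1}^{p-2}\|u_0\|_2^p$ (a \emph{second} application of $L^{2p-2}$-GNS), and the exponent $(6-p)/8$ on $S_{2p-2,\K}$ arises precisely from taking the $(6-p)/8$-th root at the end. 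Your description (``replace the $L^{2p-2}$ GNS estimate by the interpolation'') would omit this second GNS step and hence would not produce the $S_{2p-2,\K}^{(6-p)/8}$ factor you yourself flag as a fingerprint.
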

\begin{proof}
Assume that there exists a non-trivial $u_{0} \in \operatorname{dom}(\D)$ such that
    $$
\left\{\begin{array}{l}
\D u_{0}=a\chi_{\ell}\abs{u_{0}}^{p-2} u_{0}, \\
\int_{\G}\left|u_{0}\right|^{2} \,dx < 1.
\end{array}\right.
$$
By \cite[Remark 6 in Chapter 8]{Ha}, we obtain $u_{0,e}\in C^1(I_e,\mathbb{C}^2)$ for all $e \in \mathrm{E}$. Then, a direct computation yields that
\begin{equation}\label{eqappdu2}
    a^2\int_{\K}\abs{u_{{0}}}^{2p-2}\, dx= \norm{\D u}_2^2=c^2\norm{u'}_2^2 + m^2c^4\norm{u}_2^2\geq \frac{\min\{c^2,m^2c^4\}}{2}\norm{u}_{H^1}^2,
\end{equation}
and thus, by Lemma \eqref{lemgnsgh1}, we obtain
\[\begin{aligned}
    \frac{\min\{c^2,m^2c^4\}}{2}\norm{u}_{H^1}^2&\leq a^2 \int_{\K}\abs{u_{{0}}}^{2p-2}\, dx \\
    &\leq a^2S_{2p-2,\K}\norm{u}_{H^1}^{p-2}\norm{u}_2^p.
\end{aligned}
\]
Hence, for $2<p<4$,
\[
    \frac{\min\{c^2,m^2c^4\}}{2}\norm{u}_2^{4-p}\leq \frac{\min\{c^2,m^2c^4\}}{2}\norm{u}_{H^1}^{4-p} \leq a^2S_{2p-2,\K}\norm{u}_2^p,
\]
that is $a\sqrt2S_{2p-2,\K}\max\{c^{-1},(mc^2)^{-1}\}>1$,  a contradiction to \eqref{eqapppin24}.

Now, suppose  $u_0$ satisfies
$$
0<\frac{1}{2}(\D u_{0}, u_{0})_{2}-\Psi_\ell(u_{0})\leq \frac{mc^2}{2},
$$
that is
\begin{equation}\label{eqappi0m2}
        \frac{1}{2}\norm{u^+_{0}}^2 - \frac{1}{2}\norm{u^-_{0}}^2 - \frac{a}{p}\int_{\K}\abs{u_{0}}^p \leq \frac{mc^2}{2}.
\end{equation}
Then, it follows from $\langle I_{{0}}^{\prime}(u_{0}), u_{0}\rangle = 0$ and \eqref{eqappi0m2} that
\begin{equation*}
    \int_{\K}\abs{u_{{0}}}^p\, dx   \leq \frac{pmc^2}{a(p-2)},
\end{equation*}
and thus, by Lemma \eqref{lemgnsgh1}, we obtain
\[\begin{aligned}
    \frac{\min\{c^2,m^2c^4\}}{2}\norm{u}_{H^1}^2&\leq a^2 \int_{\K}\abs{u_{{0}}}^{2p-2}\, dx \\
    &\leq a^2\norm{u_0}_{L^\infty(\K, \mathbb{C}^2)}^{p-2}\int_{\K}\abs{u_{{0}}}^{p}\, dx\\
    &\leq\frac{apmc^2}{(p-2)}S_{\infty,\K}^{p-2} \norm{u}_{H^1}^\frac{p-2}{2}\norm{u}_2^\frac{p-2}{2}.\\
    &\leq\frac{apmc^2}{(p-2)}S_{\infty,\K}^{p-2} \norm{u}_{H^1}^\frac{p-2}{2},\\
\end{aligned}
\]
which implies
\[
\norm{u}_{H^1}^{3-\frac{p}{2}}\leq \max\{ m,\frac{1}{mc^2}\}\frac{2ap}{p-2}S_{\infty,\K}^{p-2}.
\]
It follows from \eqref{eqappdu2} and Lemma \eqref{lemgnsgh1} that
\[\begin{aligned}
m^2c^4\norm{u}_2^2&\leq a^2 \int_{\K}\abs{u_{{0}}}^{2p-2}\, dx\\
&\leq a^2S_{2p-2,\K}\norm{u}_{H^1}^{p-2}\norm{u}_2^p\\
&\leq a^\frac{8}{6-p}\left(\frac{2p}{p-2}\max\{ m,\frac{1}{mc^2}\}\right)^\frac{2p-4}{6-p}S_{\infty,\K}^\frac{2(p-2)^2}{6-p}S_{2p-2,\K}\norm{u}_2^p,
\end{aligned}
\]
hence
\[
 a S_{\infty,\K}^\frac{(p-2)^2}{4}S_{2p-2,\K}^\frac{6-p}{8}\left(\frac{2p}{p-2}\right)^\frac{p-2}{4} \max\{m^\frac{p-4}{2}c^{\frac{p-6}{2}},(mc^2)^{-1}\}> 1,
\]
 which is a contradiction to \eqref{eqapppin46} and this completes the proof.
\end{proof}
    We now present some conclusions derived from Lemma \ref{lempin26}. For $2<p<4$,  the condition that $0<a<a_{0}$ in Theorem \ref{th2}, \ref{th-a1} and \ref{th-a3} can be replaced by \eqref{eqapppin24}. For $4\leq p<6$, the condition that $0<a<a_{*,0}C_{p,\G}C_{p,\K}^{-1}$ in Remark \ref{remarkp>4} and Theorem \ref{th-a3} can be replaced by \eqref{eqapppin46}. Moreover, for $2<p<6$, the condition $0<a<a_{*,0}$ in Theorem \ref{th4}, Corollary \ref{co1} and Theorem \ref{th-a2} can be replaced by the following inequality
    \begin{equation*}\label{eqapppin26g}
         0<a S_{\infty,\G}^\frac{(p-2)^2}{4}S_{2p-2,\G}^\frac{6-p}{8}\left(\frac{2p}{p-2}\right)^\frac{p-2}{4} \max\{m^\frac{p-4}{2}c^\frac{p-6}{2},(mc^2)^{-1}\}\leq 1.
    \end{equation*}
    Roughly speaking, for $2<p<4$, there exists a constant $c_0>0$ that depends only on $a$, $m$ and $\G$ such that if $c>c_0$, then equation \eqref{eq1} (or \eqref{eq-a} with $\K$ having a simple cycle, see Definition \ref{defcomg}) has a normalized solution $u_c$ with $\omega_c \in (-mc^2,mc^2)$ and $\int_{\G}\abs{u_c}^{2}\, dx=1$. However, this is insufficient for studying the nonrelativistic limit of $u_c$, as we need a uniform upper bound of $C_{p,\K}$ with respect to $c>0$ to  demonstrate that $mc^2-\omega_c\not \to0$ as $c \to +\infty$, see \cite[Theorem 2.13]{Bo} for more details.

\subsection*{Conflict of interest}

On behalf of all authors, the corresponding author states that there is no conflict of interest.

\subsection*{Ethics approval}
 Not applicable.

\subsection*{Data Availability Statements}
Data sharing not applicable to this article as no datasets were generated or analysed during the current study.

\subsection*{Acknowledgements}
 C. Ji was partially supported by National Natural Science Foundation of China (No. 12171152).

  \end{document}